\documentclass[reqno]{amsart}
\usepackage{amsthm,amsmath,amsfonts,amssymb,mathtools,bbm,bm}

\usepackage[left=3.4cm,right=3.4cm,top=2.5cm,bottom=3cm]{geometry}
\usepackage[hidelinks]{hyperref}
\usepackage{enumerate}
\usepackage{graphicx,xcolor}
\usepackage{dsfont}

\newcommand	{\C}			{\mathbb{C}}
\newcommand	{\E}			{\mathbb{E}} 
\newcommand	{\N}			{\mathbb{N}}
\renewcommand{\P}			{\mathbb{P}}

\newcommand	{\R}			{\mathbb{R}}

\DeclareMathOperator	{\supp}			{supp}
\renewcommand{\Re}{\operatorname{Re}}
\renewcommand{\Im}{\operatorname{Im}}
\newcommand{\dd}{{\rm d}} 
\newcommand{\ton}{\overset{}{\underset{n\to\infty}\longrightarrow}}

\theoremstyle{plain}
\newtheorem{theorem}{Theorem}[section]
\newtheorem{lemma}[theorem]{Lemma}
\newtheorem{corollary}[theorem]{Corollary}
\newtheorem{prop}[theorem]{Proposition}
\newtheorem{conjecture}{Conjecture}

\theoremstyle{definition}

\newtheorem{example}[theorem]{Example}
\theoremstyle{remark}
\newtheorem{remark}[theorem]{Remark}
\newtheorem{problem}[theorem]{Open Problem}

\numberwithin{equation}{section}

\title{Heat flow and repeated differentiation of polynomials with i.i.d.~roots}
\author{Jonas Jalowy}
\address{Paderborn University, Institute of Mathematics, Warburger Str. 100, 33098 Paderborn, Germany}
\email{jjalowy@math.upb.de}
\date{\today}

\keywords{Random polynomials, i.i.d.\ roots, heat flow conjecture, repeated differentiation, empirical zero distributions, transport maps, Stieltjes transforms, logarithmic potentials, concentration inequalities, circular law,  elliptic law, semicircle law}

\subjclass[2020]{Primary 30C15; Secondary 60B10, 60F15, 31A15}

\begin{document}
\maketitle
\begin{abstract}
How do the zeros of a polynomial evolve under the holomorphic heat flow or repeated differentiation? 
In this work, we develop a unified probabilistic proof to three conjectures on the evolution of polynomial zeros under holomorphic heat flow and repeated differentiation. We study these two evolutions for random polynomials with i.i.d.~roots $z_1,\dots,z_n\sim\mu_0$, where $\mu_0$ on $\C$ satisfies suitable $2+\delta$ logarithmic moment conditions. 

For the heat flow of small time $t>0$ and Lipschitz continuous Stieltjes transform of $\mu_0$, we identify the limit distribution $\mu_t$ as explicit push-forward of $\mu_0$ under a transport map. For rotationally invariant $\mu_0$, we characterize the limit after $\lfloor tn\rfloor$ differentiations through its radial quantiles. For instance, the heat flow evolves the circular law into the elliptic and semicircle law, while differentiation moves surviving mass towards the origin. We also show that $o(n)$ derivatives preserve the initial distribution $\mu_0$. In fact, all convergences hold almost surely.

The common proof strategy crucially relies on recursion identities from leaving out a root, and concentration inequalities, which lead to self-consistent equations for Stieltjes transforms. This brings a method familiar from random matrix theory to polynomial evolutions. Moreover, it allows for generalizations beyond rotational symmetric distributions and quantitative stability estimates for $o(n)$ differentiations and vanishing heat-flow times.

%

\end{abstract}

\section{Introduction}\label{sec:intro}
\subsection{Background and conjectures}\label{sec:backconj}
The history of finding the roots of a polynomial affected by a differential operator can be traced back to at least the Gauss--Lucas theorem, locating the roots of the derivative $P'$ inside the convex hull of the roots of the original polynomial $P$.
For a sequence of polynomials $P_n$ of degree $n$, a more precise \emph{open question} arises: 
\begin{quotation}
	\begin{center}	\emph{For a sequence of polynomials $(P_n)_{n\in \N}$ with limiting zero distribution $\mu_0$ on $\C$,\\ and a time-evolution operator $D_t$ on polynomials, \\what is the limiting zero distribution $\mu_t$ of $D_tP_n$ as $n \to \infty$?}
	\end{center}
\end{quotation}

Of particular interest are the evolutions of polynomial zeros under (repeated) differentiation and the (backward, holomorphic) heat flow. We will explore the development of this highly active area of research throughout this paper, and encourage the interested reader to dive into the literature for fascinating connections to random matrix theory, free probability, analytic number theory, potential theory, optimal transport and (partial) differential equations.


Let us now be more precise and introduce our setting. We consider the random polynomial
$$
P_n(z):=\prod_{j=1}^n(z-z_j)
$$
of degree $n\in\N$ with  independent and identically distributed roots $z_1,z_2,\ldots\sim \mu_0$ for some distribution $\mu_0$ on $\C$.
Define the \emph{heat evolved polynomial}
\begin{align}
	P_{t,n}(z):=\exp\left(-\frac{t}{2n}\partial_z^2\right) P_n(z),\quad t\ge 0,
\end{align}
via terminating power series and the \emph{repeatedly differentiated polynomial}
\begin{align}
	Q_{t,n}(z):=\partial_z^{\lfloor tn\rfloor}P_n(z), \quad 0\le t<1,
	\label{eq:diff_Q}
\end{align}
of degree $d_n:=n-\lfloor tn\rfloor$. Here, $\partial_z$ denotes the holomorphic (Wirtinger) derivative. Their \emph{empirical distributions of roots} are given by
\begin{align}
	\mu_{t,n}:= \frac 1 n \sum_{z:P_{t,n}(z)=0} \delta_z,\qquad 
	\nu_{t,n}:=\frac{1}{d_n}\sum_{z:Q_{t,n}(z)=0}\delta_z,
	\label{eq:nu_tn}
\end{align}
where zeros are counted with multiplicity.  
Finally, define the \textit{Stieltjes transform}
\begin{align} m_0(z)=\int_{\mathbb C}\frac{1}{z-w}\mu_0(dw),\end{align}
 for almost all $z\in\C$. It satisfies $\partial_{\bar z}m_{0}=\pi\mu_{0}$ distributionally. Denote by $T_\#\mu_0$ the push-forward of $\mu_0$ under some map $T$,  and $\Rightarrow$ weak convergence of distributions. For rotationally invariant distributions, we use the radial cumulative function $F_0(r):=\mu_0(B_r)$ for $B_r=\{z\in\C:|z|<r\}$ and write $F^{-1}(u):=\inf\{r\ge0:F(r)\ge u\}$ for its radial quantile functions.

As $n\to \infty$, we are interested in the existence and descriptions of the weak limits of $\mu_{t,n}$ and $\nu_{t,n}$. Recent works predict that these limits depend only on the initial distribution $\mu_0$ and admit explicit descriptions through push-forwards under transport maps or transported radial quantiles.
 
\begin{conjecture}[{\cite[Conj. 1.4]{hallho}}]\label{conj1}
If $m_0$ is Lipschitz and $t>0$ sufficiently small, then $\mu_{t,n}\Rightarrow\mu_t$ in probability, where $\mu_t=(T_t)_\#\mu_0$ under the transport map $T_t(w):=w+t m_0(w)$.
\end{conjecture}
This conjecture was formulated in broader a universality framework
and has so far been established for random polynomials with independent coefficients \cite{heatflowrandompoly}, recently for characteristic polynomials of Ginibre matrices \cite{assiotis2026heat}, see also \cite{GAF-paper,HJK}. For i.i.d.-rooted polynomials, it remained open prior to the present work, as did the following conjecture.
\begin{conjecture}[{\cite[Conj. 2.1]{HK21}, \cite[(1)]{OSteiner}}]\label{conj2}
	If $\mu_0$ is rotationally invariant and $0<t<1$, then $\nu_{t,n}\Rightarrow\nu_t$ in probability, with rotationally invariant limit $\nu_t$ characterized by $F_t(r):=(1-t)\nu_t(B_r)$ via its quantile $F_t^{-1}(u)=\frac{u}{u+t}F_0^{-1}(u+t)$ for $0<u<1-t$.
\end{conjecture} 
Previous results consider polynomials with independent coefficients \cite{FengYao19,COR23,diff-paper} and some special classes of deterministic polynomials \cite{BHS24,GNV25,NV26,HP}.
Furthermore, it has been predicted that any number of $o(n)$ derivatives of $P_n$ with i.i.d.~roots retains the limiting distribution $\mu_0$, which in our notation translates to the following.
\begin{conjecture}[{\cite[\S 4]{MV24}, \cite[Conj. 1.1]{ANP}}]\label{conj3}
If $t_n\to 0$, then $\nu_{t_n,n}\Rightarrow\mu_0$ almost surely.
\end{conjecture} 
Independently, this conjecture has been solved \emph{very} recently and in full generality $t_n=o(1)$ by \cite{zhu}.
Previous results have been established for $t_n=1/n$ in \cite{K15}, for $t_n=\mathcal O(1/n)$ in \cite{Byun}, for $t_n=\mathcal O(\log n/(n\log\log n))$ in \cite{MV24} and for $t_n=o(1/\log n)$ in \cite{ANP}.

\medskip

In this work, we give a \textbf{unifying proof of all three conjectures} under suitable logarithmic moment assumptions of order $2+\delta$ on the initial distribution $\mu_0$.
 In addition, we strengthen Conjecture \ref{conj1} and \ref{conj2} to almost sure convergence, see Theorem \ref{thm:main} and Theorem \ref{thm:diff_radial}, and establish quantitative versions of Conjecture \ref{conj3} as well as its heat flow analogue $\mu_{t_n,n}\Rightarrow\mu_0$ for $t_n\to0$, see Theorem \ref{thm:vanish}. We will also derive more abstract statements allowing for some  non-symmetric $\mu_0$ in Theorem \ref{thm:diff_main}, discuss non-identical distributions in Remark \ref{rem:gen}, prove local push-forward theorems such as Proposition \ref{prop:localpushforward} and stability under perturbation in Proposition \ref{prop:diff_regularization}. A primary example in Corollary \ref{cor:circ} shows that the heat flow evolves the circular law $\mu_0$ into an elliptic law until it collapses to the semicircle law at $t=1$. Our probabilistic method of proof is novel for such problems and robust for generalizations: It relies on recursion identities from "leaving one factor out", truncation and martingale concentration of Stieltjes transforms, in the same spirit as in random matrix theory, cf.~Remark \ref{rem:idea}.
 
 While finishing this manuscript,  I learned through correspondence with Sean O'Rourke of his independent work \cite{sean}, which resolves Conjecture \ref{conj2} in greater generality of only assuming finite first logarithmic moment of $\mu_0$. Our two works use substantially different methods and yield complementary results, and we agreed to release our manuscripts simultaneously. 
\subsection{Push-forward theorem under heat flow} 
The following is our first main result.
\begin{theorem}\label{thm:main}
Assume $z_j\sim \mu_0$ are i.i.d.~roots, with Stieltjes transform $m_0$ being Lipschitz continuous with constant $L>0$ and satisfies 
\begin{align}
	\int_{\C}\log^{2+\delta}(1+|w|)\dd\mu_0(w)&<\infty \label{eq:log-ass}
	\end{align} 
for some $\delta>0$. If $0\le t<1/L$, then we have almost sure weak convergence of the empirical root distribution of heat evolved polynomials,
$$\mu_{t,n}\Rightarrow\mu_t\qquad \text{as } n\to\infty.$$
The limit $\mu_t$ has bounded density and is the push-forward $\mu_t=(T_t)_\# \mu_0$ under the transport map 
\begin{align}
	T_t(w):=w+t m_0(w), \qquad w\in\C.
\end{align}
\end{theorem}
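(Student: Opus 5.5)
We will identify the limit through its logarithmic potential, or equivalently its Stieltjes transform. For fixed $z$ we will show that $\frac1n\log|P_{t,n}(z)|$ converges almost surely for Lebesgue-a.e.\ $z$ to $U_t(z):=\int\log|z-w|\,\dd\mu_t(w)$. The standard tightness/log-potential criterion, as in random matrix theory, then gives $\mu_{t,n}\Rightarrow\mu_t$.

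The starting point is the heat equation. Put $f(s,z):=\exp(-\frac{s}{2n}\partial_z^2)P_n(z)$. Then $\partial_s f=-\frac1{2n}\partial_z^2 f$, so the normalised Stieltjes transform $m_{s,n}=\frac{1}{n}\partial_z\log f$ solves the viscous Burgers-type equation
\[
\partial_s m_{s,n}+m_{s,n}\partial_z m_{s,n}=-\tfrac{1}{2n}\partial_z^2 m_{s,n}.
\]
Instead of handling this PDE directly, we will use the "leave one root out" recursion. Write $P_n(z)=(z-z_k)P_n^{(k)}(z)$. Since $\exp(-\frac{t}{2n}\partial^2)$ acts on a product with a linear factor as
\[
\exp(-\tfrac{t}{2n}\partial^2)\bigl[(z-z_k)R\bigr]=(z-z_k)\exp(-\tfrac{t}{2n}\partial^2)R-\tfrac tn\,\partial\exp(-\tfrac{t}{2n}\partial^2)R,
\]
we get the exact identity $P_{t,n}=(z-z_k-t\,m^{(k)}_{t,n}(z))\,P^{(k)}_{t,n}$. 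Here $m^{(k)}_{t,n}=\frac1n\partial\log P^{(k)}_{t,n}$ is the Stieltjes transform of the evolved polynomial with $z_k$ removed, and it is independent of $z_k$. Summing $\log$ of these ratios, or differentiating, gives the self-consistent relation
\[
m_{t,n}(z)\approx \frac1n\sum_k \frac{1}{z-z_k-t\,m^{(k)}_{t,n}(z)}\approx \int\frac{\dd\mu_0(w)}{z-w-t\,m_{t,n}(z)},
\]
up to error terms. So any limit $m$ satisfies $m(z)=m_0(\zeta)$ with $z=\zeta+t\,m_0(\zeta)$, which is exactly the statement that $m$ is the Stieltjes transform of $(T_t)_\#\mu_0$. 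The limit is unique for $t<1/L$ because $T_t$ is then a bi-Lipschitz homeomorphism of $\C$: we have $|T_t(w)-T_t(w')|\ge(1-tL)|w-w'|$, so $T_t$ is injective, and it is proper, hence surjective. The equation $z=\zeta+tm_0(\zeta)$ therefore has the unique solution $\zeta=T_t^{-1}(z)$, and the density bound comes from the Jacobian lower bound.

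The concentration steps run as follows. First, $\log|P_{t,n}(z)|$ as a function of $(z_1,\dots,z_n)$ changes by a controlled amount when one root is replaced, namely by $\log|z-z_k-tm^{(k)}|$ terms. We therefore apply Doob martingale differences and Burkholder/McDiarmid-type inequalities after truncating $|z_k|\le e^{n^{\epsilon}}$. The $2+\delta$ logarithmic moment \eqref{eq:log-ass} gives summable fourth-moment-type tail bounds, and Borel--Cantelli then yields almost sure statements. Second, we replace $m^{(k)}$ by $m_{t,n}$, with error $O(1/n)$ away from zeros. Third, we use the law of large numbers for the average over $k$, conditionally on the leave-one-out quantities.

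The main obstacle will be controlling the denominators $z-z_k-t\,m^{(k)}_{t,n}(z)$ near zero, and the fact that $m_{t,n}$ is unbounded near the random zeros. I would work with $\log|\cdot|$ instead of the Stieltjes transform, since it is locally integrable, and average over $z$ in small discs, using anti-concentration of $z_k$ against a fixed Lebesgue-a.e.\ $z$. I would also run the argument as a bootstrap (Gronwall-type) in the time variable $s\in[0,t]$, showing that $m_{s,n}$ stays close to the deterministic $m_s$, which stays Lipschitz with constant $L/(1-sL)$. This is where the restriction $t<1/L$ is used to keep the fixed-point map contracting.
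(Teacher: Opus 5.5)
Your architecture matches the paper's: the commutation identity, the factorisation $P_{t,n}=(z-z_k-tm^{(k)}_{t,n})P^{(k)}_{t,n}$ with $m^{(k)}_{t,n}$ independent of $z_k$, Doob-martingale concentration of $\frac1n\log|P_{t,n}|$ after truncation, and uniqueness from $tL<1$. The gap is in the step that turns the recursion into $m=m_0(\cdot-tm)$. The identity $m_{t,n}=\frac1n\sum_k(z-z_k-tm^{(k)}_{t,n})^{-1}$ is exact, because the heat operator commutes with $\partial_z$ and so $P_{t,n}'=\sum_kP^{(k)}_{t,n}$. However, a law of large numbers ``conditionally on the leave-one-out quantities'' is not available for this sum. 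The summands are dependent across $k$, since every $m^{(k)}_{t,n}$ involves all $z_j$ with $j\neq k$. They are also heavy-tailed: $\E|a-z_k|^{-2}=\infty$ wherever $\mu_0$ has density bounded below near $a$. Substituting $m_{t,n}$ for $m^{(k)}_{t,n}$ does not help either. You would then evaluate the empirical Stieltjes transform of the initial roots at the random point $z-tm_{t,n}(z)$, where no pointwise LLN applies.

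The paper needs no LLN at all. Integrating out $z_k$ first and using exchangeability gives the \emph{exact} averaged identity $\E m_{t,n}(z)=\E m_0(z-tm^{(1)}_{t,n}(z))$. The argument then runs as follows. Take a subsequence with $\E\mu_{t,n}\Rightarrow\mu_t$; tightness is part of the potential-concentration lemma. Stability plus concentration give $m^{(1)}_{t,n}\to m_t$ in $L^1(\P\times\mathcal L|_K)$. Lipschitzness of $m_0$ lets you pass to the limit, and uniqueness identifies $m_t$. Concentration then upgrades $\E m_{t,n}\to m_t$ to almost sure convergence. Your Gronwall bootstrap in $s\in[0,t]$ is therefore unnecessary; $t<1/L$ enters only through the contraction and the bi-Lipschitz property of $T_t$.

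The stability input $m^{(k)}_{t,n}\approx m_{t,n}$ is also only heuristic in your sketch. The exact difference is $\frac{1-t\partial_zm^{(k)}_{t,n}}{n(z-z_k-tm^{(k)}_{t,n})}$. Here $\partial_zm^{(k)}_{t,n}$ carries a non-integrable singularity $n^{-1}|z-\xi|^{-2}$ at each of the $n-1$ zeros $\xi$ of $P^{(k)}_{t,n}$, with no control on their spacing. So ``$O(1/n)$ away from zeros'' needs a genuine integrability argument. The paper bounds the $p_0$-th moment, $1/2<p_0<1$, by $n^{-p_0}+n^{1-2p_0}$. It uses anti-concentration of $z_k$ for the denominator and $\int_K|z-\xi|^{-2p_0}\,\dd\mathcal L<\infty$ for the numerator. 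It then interpolates to $L^1$ by H\"older against a uniform $L^q$ bound with $1<q<2$.

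Two smaller points. First, under only the $2+\delta$ log moment, truncation at $e^{n^\epsilon}$ is eventually inactive almost surely in general only for $\epsilon\ge1/(2+\delta)$. In that regime, fourth-moment martingale bounds give tails of order $n^{4\epsilon-2}$, which are summable only when $\delta>2$. You need the Bernstein/exponential bound, or moments of order $q>2(2+\delta)/\delta$. These are available because Lipschitz $m_0$ forces a bounded density, hence uniform moments of $\log|a-z_k|$. Second, that $m_0\circ T_t^{-1}$ is the Stieltjes transform of $(T_t)_\#\mu_0$ is not a tautology. It requires the a.e.\ computation $\partial_{\bar z}(m_0\circ T_t^{-1})=\partial_{\bar w}m_0/\det JT_t$, justified by Rademacher since $T_t$ is bi-Lipschitz, as in the paper.
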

Note that we do not need to assume local logarithmic integrability, which follows for all distributions $\mu_0$ having bounded density on $\C$ and is already implied by $m_0$ being Lipschitz due to $\partial_{\bar z}m_0=\pi\mu_0$.
Theorem \ref{thm:main}  establishes Conjecture \ref{conj1}, where we discussed related results in the literature. We postpone a discussion of the transport map $T_t$ to Remark \ref{rem:transport}. A particularly interesting case is the transition from the circular law to the elliptic law. 

\begin{corollary}\label{cor:circ}
If $\mu_0:=\operatorname{Unif}(B_1)$ is the uniform distribution on the complex unit disk, having density $\frac 1 \pi \mathbf 1_{B_1 }$, then for any $0\le t\le 1$, we have a.s.~weak convergence $\mu_{t,n}\Rightarrow\mu_t$. If $t<1$, then $\mu_t$ is the uniform distribution on the ellipse $T_t(B_1)=\{x+iy:\frac{x^2}{(1+t)^2}+\frac{y^2}{(1-t)^2}<1\}$. For $t=1$, it collapses to the semicircle law having density $$
\mu_{1}(\dd x)
=\frac{1}{2\pi}\sqrt{4-x^2}\,
\mathbf 1_{[-2,2]}(x) \dd x.$$
	\end{corollary}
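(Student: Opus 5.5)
The plan is to treat the cases $t<1$ and $t=1$ separately. For $t<1$ we apply Theorem \ref{thm:main} directly and compute the push-forward explicitly. For $t=1$, where the Lipschitz condition fails at the boundary value $tL=1$, we use a separate limiting argument.

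\textbf{Step 1 (Stieltjes transform).} For $\mu_0=\operatorname{Unif}(B_1)$ we have
$$m_0(w)=\bar w \text{ for } |w|\le 1, \qquad m_0(w)=1/w \text{ for } |w|>1.$$
Inside, this follows by integrating over circles of radius $r$. A circle with $r<|w|$ contributes $2r\,dr/w$, and a circle with $r>|w|$ contributes $0$, so
$$m_0(w)=\frac{|w|^2}{w}=\bar w .$$
The function $m_0$ is continuous across $|w|=1$. It is Lipschitz with constant $L=1$: inside it is $\bar w$, and outside $|\partial(1/w)|=|w|^{-2}\le 1$. Hence $m_0$ is $1$-Lipschitz on all of $\C$, since the segment between any two points can be split into pieces on which one of these bounds applies. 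The moment condition \eqref{eq:log-ass} holds trivially by compact support. Theorem \ref{thm:main} therefore gives $\mu_{t,n}\Rightarrow (T_t)_\#\mu_0$ almost surely for all $0\le t<1$.

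\textbf{Step 2 (identify the ellipse).} On $B_1$ the map is $T_t(w)=w+t\bar w$. Writing $w=x+iy$, this is the real-linear map $(x,y)\mapsto((1+t)x,(1-t)y)$. Its Jacobian is constant, equal to $1-t^2>0$. The push-forward of the uniform measure on $B_1$ under an invertible linear map is uniform on the image, which is the stated ellipse. No mass lies outside $B_1$, so the exterior part of $T_t$ is irrelevant.

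\textbf{Step 3 ($t=1$).} This is the main obstacle, since Theorem \ref{thm:main} requires $t<1/L=1$. The formal push-forward still makes sense: $T_1(x+iy)=2x$, and the image of the uniform disk under $(x,y)\mapsto 2x$ has density
$$\frac{1}{2\pi}\sqrt{4-s^2}\,\mathbf 1_{[-2,2]}(s),$$
namely the semicircle law. To justify convergence I would try two routes.

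\emph{Route (a).} If the proof of Theorem \ref{thm:main} passes through the self-consistent equation for Stieltjes transforms, then the needed uniqueness or stability may survive at $t=L^{-1}$ when it is applied only off the real segment. Outside the limit support the equation becomes
$$m_t(z)=m_0\bigl(z-t\,m_t(z)\bigr),$$
whose solution at $t=1$ is explicit: from $m=1/(z-m)$ one gets $m^2-zm+1=0$, which is the semicircle Stieltjes transform.

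\emph{Route (b).} This is the one I would try first because it is concrete. Use the exact semigroup property
$$P_{1,n}=\exp\!\Bigl(-\tfrac{1-s}{2n}\partial^2\Bigr)P_{s,n},$$
and control how the root distribution changes under the short remaining heat time $1-s$ via logarithmic potentials. Concretely, I would bound
$$\frac1n\log\bigl|P_{1,n}(z)\bigr|-\frac1n\log\bigl|P_{s,n}(z)\bigr|$$
for $z$ at distance at least $\varepsilon$ from the limit support, uniformly in $n$. This uses that the heat operator with small time changes normalized log-moduli by $O(1-s)$ away from the zeros. One then lets $s\uparrow1$, using that the elliptic laws $\mu_s$ converge weakly to the semicircle law.

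The delicate point is that $\mu_1$ is supported on a segment, so the potentials must be controlled off that segment. Uniqueness of the limit then follows because the logarithmic potential determines the measure, together with tightness from Gauss--Lucas-type bounds on the support.
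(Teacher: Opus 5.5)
Your Steps 1--2 for $t<1$ are correct and match the paper's argument. The gap is at $t=1$, which neither route proves.

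\textbf{Route (a).} This is the right idea and it is what the paper does, but the steps that give uniqueness when $tL=1$ are missing.

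First, you should observe that the proof of Proposition \ref{prop:m_t_limits} uses $tL<1$ only for uniqueness. The tightness, Lemmas \ref{lem:leave_one_out} and \ref{lem:stieltjes_concentration}, and the averaged identity \eqref{eq:averagedm} all hold at $t=1$. Hence every subsequential limit satisfies $m_1(z)=m_0(z-m_1(z))$ for a.e.\ $z\in\C$.

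Second, you reduce this to $m=1/(z-m)$ only ``outside the limit support'', which presupposes the answer. The missing observation is that the reduction holds for \emph{every} $z\notin\R$. If $w:=z-m_1(z)\in\overline{B_1}$, then $m_1(z)=\bar w$ and $z=w+\bar w\in\R$, a contradiction. So $|w|>1$ and $m_1(z)=1/w$.

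Third, ``whose solution is explicit'' ignores that $m^2-zm+1=0$ has two roots. A pointwise a.e.\ choice between them is exactly the uniqueness question. The paper settles it using $|m_1(z)|=1/|w|<1$. The two roots have product $1$, and neither has modulus $1$ when $z\notin\R$ (otherwise $z=m+\bar m\in\R$). This selects $m_1(z)=\frac{z-\sqrt{z^2-4}}{2}$. Since $\R$ is Lebesgue-null, this determines $m_1$ in $L^1_{\mathrm{loc}}$, and concentration upgrades to almost sure convergence.

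\textbf{Route (b).} This is the route you would try first, but it rests on an estimate you do not prove and which carries the entire difficulty.

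A pointwise bound $|U_{1,n}(z)-U_{s,n}(z)|=O(1-s)$ from $\partial_\tau U_{\tau,n}=-\frac12\Re\frac{P_{\tau,n}''}{n^2P_{\tau,n}}$ needs $z$ to stay a fixed distance from \emph{all} zeros of $P_{\tau,n}$ for $\tau\in[s,1]$. Weak convergence of $\mu_{\tau,n}$ does not exclude $o(n)$ outlying zeros near $z$. Walsh's bound only confines the zeros to a disk of radius about $1+2\sqrt\tau$.

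An $L^1(K)$ version does not escape this. It requires integrating $\frac1{n^2}\sum_{j\neq k}(z-\xi_j)^{-1}(z-\xi_k)^{-1}$ over $z$, which costs $\log(1/|\xi_j-\xi_k|)$ per pair. That amounts to microscopic control of zero gaps along the flow, which you do not have.

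Replacing Step 3 by route (a), completed as above, closes the proof.
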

	 
The assumption in Theorem \ref{thm:main} of $m_0$ being globally Lipschitz can be weakened to existence of domains whose points have a unique preimage under \(T_t\), provided the resulting inverse is continuously differentiable, implying a local push-forward theorem, see Proposition \ref{prop:localpushforward}.

\begin{remark}[Idea of proof]\label{rem:idea}
	We will prove our claims in Section \ref{sec:heat flow}.  
	The central idea of proving Theorem \ref{thm:main} is to compare the heat-evolved polynomial with the one obtained after removing a single initial factor $(z-z_k)$ from $P_n$. A commutation identity describes how multiplication by this factor interacts with the heat-flow operator, yielding recursive formulas for the polynomials, their Stieltjes transforms and their logarithmic potentials.
	  More precisely, Proposition \ref{prop:crucial_rep} states that 
\begin{align}
	m_{t,n}(z)=\frac1n\sum_{k=1}^n
	\frac{1}{z-z_k-tm_{t,n}^{[k]}(z)},
\end{align}
where $m_{t,n}^{[k]}$ is the finite-$n$ Stieltjes transform with the $k$-th factor $(z-z_k)$  left out. Such recursions  should be seen as an analogue to the Schur inversion formulas used to prove the semicircle law for Wigner matrices in random matrix theory. The technical part of our proof is to verify concentration $m_{t,n}\approx\E m_{t,n}$ and stability $m_{t,n}\approx m_{t,n}^{[k]}$ of leaving one factor out. Eventually, this leads to convergence of the Stieltjes transforms $m_t$, giving the \emph{self-consistent equation}   
\begin{align}\label{eq:selfconseq}
	m_t(z) =m_0(z-tm_t(z))=m_0(T_t^{-1}(z)),\quad \text{ in } L_{\mathrm{loc}}^1.
	\end{align}
This implies a local push-forward theorem.\end{remark}

\begin{remark}[Extensions]\label{rem:gen} This approach seems versatile enough to generalize into several directions. First and foremost, it is possible to study the limiting root measures of i.i.d.-rooted polynomials under other differential operators (cf. \cite{CJ}), such as repeated differentiation, which we shall do below in detail. Second, the whole proof will work with only minor modifications, also for independent but non-identically distributed random variables $z_k\sim\mu_0^{(k)}$ having uniformly bounded densities with joint compact support, if we assume that 
$\frac 1 n \sum_{k=1}^n\mu_0^{(k)}\Rightarrow \mu_0.$ However, we decided to only comment on this generalization at crucial steps during the proof, see Remark \ref{rem:gen2} instead of overtechnicalizing the presentation.
Third, compact support of $\mu_0$ simplifies the proof a lot, making the core steps accessible. On the other hand, weakening the $2+\delta$ logarithmic moment assumption to $1+\delta$, one may still get weak convergence in probability, see again Remark \ref{rem:gen2}.
Lastly, the investigation of dropping independence of roots for the random polynomials aiming at universality shall be left for a future work, where significantly more elaborate techniques seem to be necessary. 
\end{remark}

\begin{figure}[t]
	\centering
	\includegraphics[width=0.325\linewidth,trim={100 140 100 140},clip]{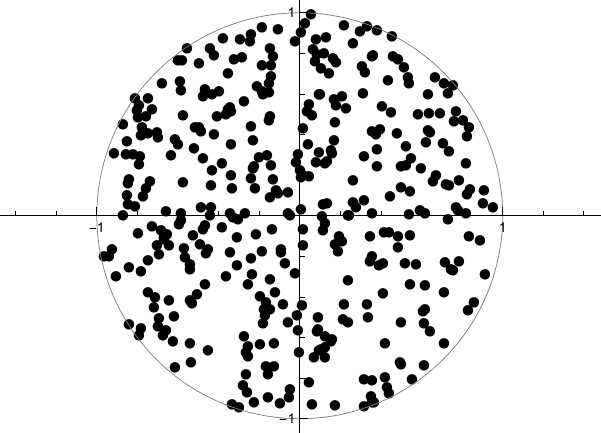}
	\includegraphics[width=0.325\linewidth,trim={100 140 100 140},clip]{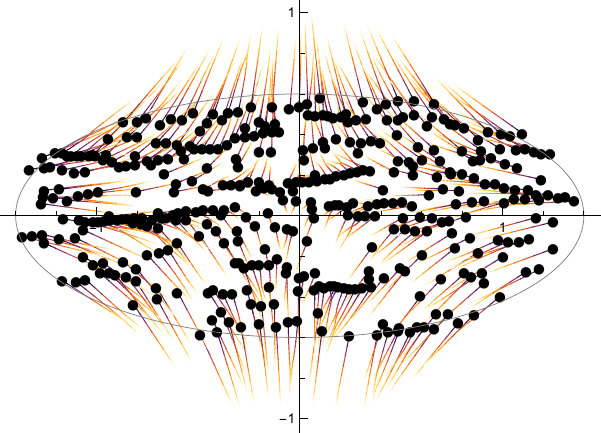}
	\includegraphics[width=0.325\linewidth,trim={100 140 100 140},clip]{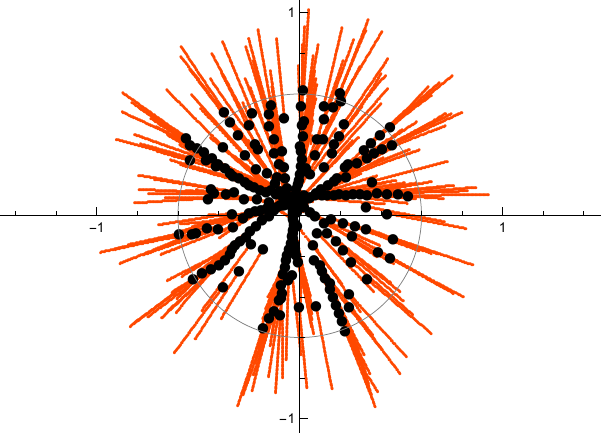}
	\caption{The black dots depict $n=400$ i.i.d.~roots $z_j\sim \mu_0$, distributed according to the circular law (left) as well as their evolutions after time $t=2/5$ under the heat flow (center) and repeated differentiation (right). The empirical distribution $\mu_{t,n}$ of roots of the heat evolved polynomial $P_{t,n}$ is (globally, macroscopically) close to the uniform distribution on the depicted ellipse, illustrating the findings of Corollary \ref{cor:circ}. The actual evolution of the roots under the heat flow is shown by orange lines, whose majority follow the trajectories of the transport map $T_t$. Similarly, the findings of Corollary \ref{cor:diff_circ} are illustrated in the right picture, whose (global, macroscopic) limit distribution $\nu_t$ of $Q_{t,n}$ is supported on the disk of radius $1-t=3/5$ and each differentiation step is shown in orange, following the trajectories of the transport map $S_t$. For a discussion on the (microscopic) line formation phenomenon, see Open Problem \ref{prob}.}
	\label{fig:FIGUREWHATELSE}
\end{figure}

\subsection{Radial push-forward theorem under repeated differentiation}
Recall the definition \eqref{eq:diff_Q} of repeatedly differentiated i.i.d.-rooted polynomial $Q_{t,n}(z):=\partial_z^{\lfloor tn\rfloor}P_n(z)$ and its empirical distribution of roots $ \nu_{t,n}$, for which we present our second main result.
\begin{theorem}\label{thm:diff_radial}
	Assume that $\mu_0$ is rotationally invariant and satisfies
	\begin{align}\label{eq:ass_rot}
	\int_{\C}|\log|w||^{2+\delta}\mu_0(\dd w)<\infty
	\end{align}
	for some $\delta>0$.
For any $0<t<1$, we have almost sure weak convergence of the empirical root distribution of repeatedly differentiated polynomials
	$$
	\nu_{t,n}\Rightarrow
	\nu_t.
	$$
	The limit $\nu_t$ is rotationally invariant, characterized by $F_t(r):=(1-t)\nu_t(B_r )$ satisfying
\begin{align}\label{eq:quantiles}
	F_t^{-1}(u)=\frac{u}{u+t}F_0^{-1}(u+t),
	\qquad 0<u<1-t.
\end{align}
It has a density on $\C$ that is locally bounded by $\frac{1}{2\pi(1-t)t s_t|z|} $ for almost all $z\neq 0$ and $s_t:=\inf\{r>0:F_0(r)>t\}$.
\end{theorem}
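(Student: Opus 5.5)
We follow the same leave-one-out strategy as for Theorem \ref{thm:main}, with $\partial_z$ in place of the heat operator, and then solve the limiting self-consistent equation under rotational symmetry.

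\textbf{Step 1: leave-one-out recursion.} Write $P_n=(z-z_k)P_n^{[k]}$. Leibniz' rule gives
\[
\partial_z^{m}P_n=(z-z_k)\,\partial_z^{m}P_n^{[k]}+m\,\partial_z^{m-1}P_n^{[k]}, \qquad m=\lfloor tn\rfloor .
\]
Dividing by $\partial_z^{m}P_n^{[k]}$ and taking logarithmic derivatives should produce an identity of the form
\[
\frac{\partial_z^{m+1}P_n}{\partial_z^m P_n}=\sum_k(\cdots).
\]
Rather than working with $Q_{t,n}'/Q_{t,n}$ directly, it is convenient to work with the ratio
\[
R_{n}(z):=\frac{\partial^{m-1}P_n}{m\,\partial^{m}P_n}\cdot\frac{m}{n},
\]
or with the logarithmic potential of $\nu_{t,n}$ as a function of $t$. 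From this we aim at an analogue of Proposition \ref{prop:crucial_rep},
\[
\frac{1}{n}\frac{\partial^{m}P_n}{\partial^{m+1}P_n}\cdot(\ldots)=\frac1n\sum_k \frac{1}{z-z_k+t\,w_{n}^{[k]}(z)},
\]
where $w_n^{[k]}$ is the corresponding ratio for $P_n^{[k]}$.

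\textbf{Step 2: concentration and stability.} Two estimates are needed:
\begin{itemize}
\item concentration $w_n\approx\E w_n$, via martingale differences over revealing the $z_k$ one at a time;
\item stability $w_n\approx w_n^{[k]}$.
\end{itemize}
Both are to be proved in $L^1_{\mathrm{loc}}$, after truncating the roots to $|z_k|\le R_n$. The truncation is controlled by the $\log^{2+\delta}$ moment \eqref{eq:ass_rot}, and the almost sure statement follows by Borel--Cantelli. Passing to the limit yields a self-consistent equation
\[
w_t(z)=\int\frac{\mu_0(\dd v)}{z-v+t\,w_t(z)}\quad\text{(schematically)},
\]
that is, $w_t=m_0(S_t^{-1}(z))$ for a transport map $S_t$. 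Existence of the limit and uniqueness of the solution then give convergence of the logarithmic potentials, hence $\nu_{t,n}\Rightarrow\nu_t$ almost surely, provided we also control the zero-free-region and the potentials' tightness. Tightness follows from Gauss--Lucas combined with the moment bound.

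\textbf{Step 3: solving under rotational invariance.} For radial $\mu_0$ we have
\[
m_0(z)=F_0(|z|)/z .
\]
We look for radial solutions $w_t(z)=g(|z|)/z$. Substituting into the equation reduces it to a scalar relation between $r$ and a level $u$: the point at radius $r$ of $\nu_t$ comes from radius $F_0^{-1}(u+t)$, contracted by the factor $u/(u+t)$. This is exactly \eqref{eq:quantiles}. The density bound then follows by differentiating this quantile relation: the Jacobian of $u\mapsto \frac{u}{u+t}F_0^{-1}(u+t)$ is bounded below by $\frac{t}{(u+t)^2}F_0^{-1}(u+t)\ge t s_t$, using $F_0^{-1}(u+t)\ge s_t$ and $u+t\le1$. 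Converting the radial density to a planar one gives the bound $\frac{1}{2\pi(1-t)ts_t|z|}$.

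\textbf{Main obstacle.} The differentiation recursion, unlike the heat flow, is not closed in a single Stieltjes transform: it links $\partial^m$ and $\partial^{m-1}$, so the natural object is a family indexed by $t$. I would first try treating $t\mapsto$ log-potential as the solution of a transport PDE obtained from the recursion in $m$ (for instance $\partial_t U=\log$ of a ratio), and proving concentration uniformly in $m\le tn$. The hardest part, I expect, is controlling the denominators near $0$, where mass accumulates and $m_0$ is not Lipschitz. Here the $|\log|w||^{2+\delta}$ condition at $0$ must be used to exclude the exceptional set.
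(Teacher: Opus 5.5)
Your outline (leave-one-out, martingale concentration, truncation at $R_n$, radial solution) points in the right direction, but the steps that carry the proof are missing or wrong.

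\textbf{The recursion and the limiting equation.} Contrary to your ``main obstacle'', the recursion \emph{does} close in a single Stieltjes transform. Divide your own Leibniz identity by $\partial_z^{\lfloor tn\rfloor-1}P_n^{[k]}$, i.e.\ remove the factor $(z-z_k)$ \emph{and} take one derivative fewer. Then $Q_{t,n}^{[k]}:=\partial_z^{\lfloor tn\rfloor-1}P_n^{[k]}$ has the same degree $d_n$ as $Q_{t,n}$ and is independent of $z_k$. With $m_{t,n}^{[k]}:=Q_{t,n}^{[k]\prime}/(d_nQ_{t,n}^{[k]})$ it satisfies $Q_{t,n}=Q_{t,n}^{[k]}\bigl(\lfloor tn\rfloor+d_n(z-z_k)m_{t,n}^{[k]}\bigr)$, and $\sum_kQ_{t,n}^{[k]}=\partial_z^{\lfloor tn\rfloor-1}P_n'=Q_{t,n}$. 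This yields
\[
m_{t,n}=\frac1{d_n}\sum_{k=1}^n\frac{1}{z-z_k+\frac{\lfloor tn\rfloor}{d_nm_{t,n}^{[k]}}},\qquad
1=\frac1n\sum_{k=1}^n\frac{1}{\frac{\lfloor tn\rfloor}{n}+\frac{d_n}{n}(z-z_k)m_{t,n}^{[k]}},
\]
so no family in $t$ or transport PDE is needed.

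Your schematic limit $w_t=m_0(z+tw_t)$ has the wrong structure. With $w_t=(1-t)m_t$ the correct equation is $w_t=m_0(z+t/w_t)$. Only this form turns the radial ansatz $w_t=F_t(|z|)/z$ into the scalar relation $F_t(r)+t=F_0\bigl(r+tr/F_t(r)\bigr)$; with yours, $z+tw_t$ is not parallel to $z$. The reciprocal also creates the real obstacle, which you leave unresolved: excluding $m_t=0$ on a set of positive measure. The paper handles this with the second identity. Averaging over $z_1$ gives $1=\E\bigl[m_0(z+\tau/w)/w\bigr]$ with $w=\frac{d_n}{n}m_{t,n}^{[1]}$ and $\tau=\lfloor tn\rfloor/n$. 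Since $zm_0(z)\to1$, the map $w\mapsto m_0(z+\tau/w)/w$ extends continuously to $w=0$ with value $1/\tau$. In the limit, the identity would read $1/t=1$ on $\{m_t=0\}$, which is impossible for $0<t<1$. The logarithmic moment at the origin plays no role in this step.

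\textbf{Missing density removal and uniqueness.} You never deal with the fact that $\mu_0$ need not have a density; for example, uniform measure on a circle is allowed. The stability estimate $m_{t,n}\approx m_{t,n}^{[k]}$ uses $\sup_a\int|a-w|^{-p}\mu_0(\dd w)<\infty$, and the passage to the limit uses that $m_0$ is bounded and uniformly continuous. Truncation at $R_n$ only removes large roots and does not help here. The paper first proves the theorem for bounded densities. It then regularizes $z_k^\varepsilon=z_k+\varepsilon X_k$ with $X_k\sim\mathrm{Unif}(B_1)$ and shows $\sup_n\E\|U_{t,n}-U^\varepsilon_{t,n}\|_{L^1(K)}\to0$ by replacing roots one at a time through the recursion. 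From this it deduces $\nu_t^\varepsilon\Rightarrow\nu_t$, and it obtains almost sure convergence from a $(2+\delta)$-moment Efron--Stein bound and Borel--Cantelli. This is where $\int|\log|w||^{2+\delta}\dd\mu_0<\infty$ really enters: together with rotational invariance it gives $\sup_z\int|\log(1\wedge|z-w|)|^{2+\delta}\mu_0(\dd w)<\infty$, and it forces $s_t>0$.

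Your Step 3 also presupposes uniqueness. You must note that every subsequential limit of $\E\nu_{t,n}$ is rotationally invariant. You also need that $R_t(s)=s-st/F_0(s)$ is a strictly increasing bijection $(s_t,\infty)\to(0,\infty)$, so that the scalar relation determines $F_t$. Your density bound via $F_t^{-1}(v)-F_t^{-1}(u)\ge ts_t(v-u)$ is correct and matches the paper.
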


This establishes Conjecture \ref{conj2} via \cite[Equation (24)]{HK21} under our logarithmic moment assumption and  strengthens the statement from convergence in probability to almost surely. Recall that we discussed previous results in the literature already below Conjecture \ref{conj2}, most notably the independent work \cite{sean}, which resolves the conjecture for any rotationally invariant distribution under a weaker logarithmic moment assumption. 

Assumption \eqref{eq:ass_rot} rules out atoms at the origin, but allows mass on circles singular with respect to the Lebesgue measure $\mathcal L$ on $\C$. In the case of $\mu_0$ having a bounded density on $\C$, then \eqref{eq:ass_rot} reduces to \eqref{eq:log-ass}, and we immediately get the following corollary, in the language of Theorem \ref{thm:main}.
\begin{corollary}\label{cor:diff_radial_dens}
	Assume that $\mu_0$ is rotationally invariant with bounded density on $\C$ and satisfies \eqref{eq:log-ass}.
	 Let $0<t<1$ and define another transport map 
	\begin{align}
		S_t(w):=w-\frac{t}{m_0(w)}.
		\label{eq:diff_transport}
	\end{align}
	Then, we have almost sure weak convergence of the empirical root distribution of repeatedly differentiated polynomials
	$$
	\nu_{t,n}\Rightarrow
	\nu_t:=\frac{1}{1-t}(S_t)_\#(\mu_0|_{W_t}),
	$$
	where $W_t=\{w\in\C:\mu_0(B_{|w|} )>t\}$.
\end{corollary}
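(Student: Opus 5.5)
Corollary \ref{cor:diff_radial_dens} follows directly from Theorem \ref{thm:diff_radial}. The only work is to check that the push-forward description of $\nu_t$ agrees with the quantile description \eqref{eq:quantiles}.

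First, the assumptions. If $\mu_0$ has bounded density, then $\int_{B_1}|\log|w||^{2+\delta}\dd\mu_0<\infty$, because $|\log|w||^{2+\delta}$ is locally integrable with respect to Lebesgue measure. Away from the unit disk, $|\log|w||\le\log(1+|w|)$. Hence \eqref{eq:log-ass} implies \eqref{eq:ass_rot}, and Theorem \ref{thm:diff_radial} applies. It gives almost sure convergence $\nu_{t,n}\Rightarrow\nu_t$, where $\nu_t$ is rotationally invariant with quantiles $F_t^{-1}(u)=\frac{u}{u+t}F_0^{-1}(u+t)$.

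Second, we compute the push-forward. By rotational invariance and Newton's theorem, $m_0(w)=F_0(|w|)/w$ for $w\neq0$. Therefore
$$S_t(w)=w-\frac{tw}{F_0(|w|)}=w\Bigl(1-\frac{t}{F_0(|w|)}\Bigr).$$
Two consequences follow:
\begin{itemize}
\item $S_t$ preserves arguments.
\item On $W_t=\{F_0(|w|)>t\}$ it maps the circle of radius $r$ to the circle of radius $\rho(r):=r\,\frac{F_0(r)-t}{F_0(r)}>0$.
\end{itemize}
So $(S_t)_\#(\mu_0|_{W_t})$ is rotationally invariant. Its total mass is $\mu_0(W_t)=1-t$, since $F_0$ is continuous when $\mu_0$ has a density, and hence $\mu_0(F_0(|w|)\le t)=t$. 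Dividing by $1-t$ gives a probability measure.

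Third, we identify the radial quantiles. Substitute $r=F_0^{-1}(u+t)$, which ranges over the radii in $W_t$ as $u$ ranges over $(0,1-t)$. Then $F_0(r)=u+t$ and
$$\rho(r)=\frac{u}{u+t}F_0^{-1}(u+t).$$
This is exactly the right-hand side of \eqref{eq:quantiles}. The map $u\mapsto\frac{u}{u+t}F_0^{-1}(u+t)$ is nondecreasing, being a product of nonnegative nondecreasing functions. Hence $\rho$ is a monotone map pushing the radial law of $\mu_0|_{W_t}$, parametrized by the quantile level $u=F_0(r)-t$, to a radial law whose mass below $\rho(F_0^{-1}(u+t))$ equals $u$. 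Equivalently, the cumulative function $G$ of $(S_t)_\#(\mu_0|_{W_t})$ satisfies $G^{-1}(u)=\frac{u}{u+t}F_0^{-1}(u+t)$. This is $F_t^{-1}$. A rotationally invariant measure is determined by its radial quantile function, so $\frac{1}{1-t}(S_t)_\#(\mu_0|_{W_t})=\nu_t$.

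The main obstacle is this quantile matching at points where $\rho$ is not strictly monotone, namely flat pieces of $F_0^{-1}$ where $\mu_0$ puts no mass on annuli. There one checks directly that $\mu_0(\rho(|w|)<s)=G(s)$ for all $s$ via the generalized inverse. This holds because
$$\{r:\rho(r)<s\}\supseteq\{r:F_0(r)<G(s)+t\},$$
up to $\mu_0$-null sets, and the reverse inclusion holds by the same monotonicity. The remaining steps are bookkeeping.
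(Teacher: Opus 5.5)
Your proof is correct and is essentially the paper's. The paper likewise obtains the corollary from the bounded-density case of Theorem \ref{thm:diff_radial}, using the same ingredients: $m_0(w)=F_0(|w|)/w$, the radial map $R_t(s)=s-st/F_0(s)$ (your $\rho$), $\mu_0(W_t)=1-t$ by continuity of $F_0$, and the identity $(1-t)^{-1}(S_t)_\#(\mu_0|_{W_t})(B_r)=(F_0(R_t^{-1}(r))-t)/(1-t)=\nu_t(B_r)$. It simply runs the quantile inversion in the opposite direction to yours.

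Your ``main obstacle'' is not actually an obstacle. By \eqref{eq:RRR}, $\rho$ is a strictly increasing bijection from $(s_t,\infty)$ onto $(0,\infty)$, even across empty annuli. Empty annuli are flat pieces of $F_0$, not of $F_0^{-1}$, and the latter cannot occur under a bounded density. Hence $G(s)=\mu_0(W_t\cap\{\rho(|w|)<s\})=F_0(\rho^{-1}(s))-t$ directly, and the quantile matching is immediate.
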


Since $\mu_0$ is rotationally symmetric, 
it is also possible to rephrase the above as a radial push-forward, which we will learn in the proof. However, for instance if $\mu_0$ is uniform on the circle $\{|z|=1\}$, then we cannot expect a push-forward Theorem, since $S_t$ would need to map each point on the circle to the continuum of radii $(0,1-t)$. The quantile function $F_0^{-1}$ being constant is allowed to split the mass, by filling this continuum with $(t,1)\ni u\mapsto 1-t/u\in (0,1-t)$.
 
 \begin{remark}[Idea of proof]\label{rem:idea2}
In our proof in Section \ref{sec:diff}, we shall replace the rotationally invariant assumption by a more abstract uniqueness condition of the corresponding self-consistent equation, see Theorem \ref{thm:diff_main} and Remark \ref{rem:nonsymmex}. The general strategy for the setting of Corollary \ref{cor:diff_radial_dens} is similar to that of the heat flow, see Remark \ref{rem:idea}. However, due to the additional restriction to the region $W_t$ of surviving mass, verifying the uniqueness criterion, and additional technicalities arising from inverting $m_{t,n}\approx 0$ in \eqref{eq:diff_transport}, we decided to address the case of repeated differentiation after the heat flow. In order to lift the push-forward statement to the setting of Theorem \ref{thm:diff_radial} without assuming densities, we use a perturbative argument, which again relies upon our recursive identities.
 Note that convergence of the Stieltjes transform outside the support has been proven in \cite{martinezfinkelshtein} for deterministic polynomials, however such exterior convergence alone does not identify the limiting measure, which is one crucial part of the proof. 
\end{remark}
Analogously to Corollary \ref{cor:circ}, let us discuss the special case of the circular law.

\begin{corollary}\label{cor:diff_circ}
	If $\mu_0=\operatorname{Unif}(B_1)$ is again the circular law, then for any $0\le t<1$, we have almost sure weak convergence $\nu_{t,n}\Rightarrow\nu_t$ with $\nu_t$ being rotationally invariant with density
	\begin{align}
		\rho_t(z)=\frac{\mathbf 1_{B_{1-t}}(z)}{2\pi(1-t)}
		\left(1+\frac{|z|^2+2t}{|z|\sqrt{|z|^2+4t}}\right)
	.
		\label{eq:diff_circ_density}
	\end{align}
\end{corollary}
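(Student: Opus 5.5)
The plan is to read Corollary \ref{cor:diff_circ} off Theorem \ref{thm:diff_radial}, or equivalently Corollary \ref{cor:diff_radial_dens}, and then compute. The circular law is rotationally invariant, has bounded density $\frac1\pi\mathbf 1_{B_1}$, and clearly satisfies \eqref{eq:ass_rot}, since $\int_{B_1}|\log|w||^{2+\delta}\,\dd w<\infty$ and there is no mass near infinity. So almost sure convergence $\nu_{t,n}\Rightarrow\nu_t$ holds, with $\nu_t$ rotationally invariant and determined by \eqref{eq:quantiles}. The case $t=0$ is trivial: formula \eqref{eq:diff_circ_density} at $t=0$ gives $\frac1{2\pi}(1+1)=\frac1\pi$ on $B_1$, and $\nu_{0,n}=\mu_{0,n}\Rightarrow\mu_0$ by the strong law of large numbers. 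So only the explicit density for $0<t<1$ needs to be identified.

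First, $F_0(r)=r^2$ on $[0,1]$, so $F_0^{-1}(v)=\sqrt v$. Then \eqref{eq:quantiles} gives
\begin{align*}
F_t^{-1}(u)=\frac{u}{\sqrt{u+t}},\qquad 0<u<1-t.
\end{align*}
This map is strictly increasing, starts at $0$ when $u=0$, and equals $1-t$ at $u=1-t$. Hence $\nu_t$ is supported on $\overline{B_{1-t}}$, and $F_t$ is the inverse function of $r=u/\sqrt{u+t}$.

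To invert, set $r^2(u+t)=u^2$ and take the positive root:
\begin{align*}
u=F_t(r)=\tfrac12\Bigl(r^2+r\sqrt{r^2+4t}\Bigr),\qquad 0\le r\le 1-t.
\end{align*}
For a rotationally invariant law, the density is $\rho_t(z)=\frac{1}{2\pi r}\frac{\dd}{\dd r}\nu_t(B_r)$ with $r=|z|$, and $\nu_t(B_r)=F_t(r)/(1-t)$. Differentiating gives
\begin{align*}
F_t'(r)=\tfrac12\Bigl(2r+\sqrt{r^2+4t}+\tfrac{r^2}{\sqrt{r^2+4t}}\Bigr)=r+\frac{r^2+2t}{\sqrt{r^2+4t}}.
\end{align*}
Dividing by $2\pi r(1-t)$ yields exactly \eqref{eq:diff_circ_density}. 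As a consistency check, $F_t(1-t)=1-t$ because $(1-t)^2+4t=(1+t)^2$, so the total mass is one.

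There is no real obstacle here. The only points needing care are the hypothesis check, in particular that $|\log|w||^{2+\delta}$ is integrable near $0$ for the planar uniform law, and choosing the correct branch of the quadratic when inverting $F_t^{-1}$. As a cross-check, one could instead use Corollary \ref{cor:diff_radial_dens}: here $m_0(w)=\bar w$ on $B_1$, so $S_t(w)=w-t/\bar w$ maps radius $s\in(\sqrt t,1)$ to radius $s-t/s$. This radial push-forward of $\mu_0|_{W_t}$ should give the same $F_t$, since $s^2=F_t(r)+t$ solves $r=s-t/s$.
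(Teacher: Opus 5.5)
Your proposal is correct and follows essentially the paper's argument: both specialize Theorem \ref{thm:diff_radial} to $F_0(s)=s^2$ and arrive at $\nu_t(B_r)=\frac{r(r+\sqrt{r^2+4t})}{2(1-t)}$. The only difference is cosmetic: you invert the quantile map $u\mapsto u/\sqrt{u+t}$ from \eqref{eq:quantiles}, while the paper inverts the radial transport $R_t(s)=s-t/s$ (your cross-check), and the two are related by $u=s^2-t$. Your separate treatment of $t=0$ via the law of large numbers covers a case the paper leaves implicit.
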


\begin{remark}[Real roots and free probability]
	For real-rooted polynomials and $\mu_0$ on $\R$, deterministic analogues of all three Conjectures \ref{conj1}, \ref{conj2} and \ref{conj3} are known. The heat flow corresponds to a finite free additive convolution with a rescaled Hermite polynomial and satisfies \begin{align}\label{eq:boxplussc} \mu_{t,n}\Rightarrow\mu_0\boxplus\mathrm{sc}_t,\qquad  t\geq0,\end{align} where $\mathrm{sc}_t$ is the centered semicircle law of variance $t$, see \cite{ZakharLeeYang,VW22,JKM2,CJ}. Repeated differentiation satisfies $$\nu_{t,n}\Rightarrow \mu_0^{\boxplus 1/(1-t)}\Big(\frac{\cdot}{1-t}\Big),\qquad 0<t<1,$$ relating differentiation to fractional free additive convolution powers, see \cite{Steiner19,Steiner21,Stao,KT22,arizmendi_garza_vargas_perales,JKM1,arizmendi2026s}. These finite free convolution representations also imply (non-quantitative) vanishing-time statements in Theorem \ref{thm:vanish} below, while other differentiation regimes have been studied in \cite{CampbellAppell,CORuni}.	
	
	For general distributions on $\C$, free convolutions are ill-defined, but $\mu_t$ can still be interpreted from a free probability perspective: If $\mu_0$ is rotationally invariant and it is the Brown measure with some circular component, then Theorem \ref{thm:main} turns that component to an elliptic component, and Theorem \ref{thm:diff_radial} can be interpreted via multiplication with another $R$-diagonal operator. We refer to \cite{heatflowrandompoly,COR23,diff-paper} for more information. 
\end{remark}

\subsection{Quantitative stability under vanishing time evolutions}
Lastly, we have a (quantitative) result for $t=t_n=o(1)$ being a vanishing deterministic sequence. We use $\wedge$ to denote the minimum and the convention $0\log0=0$.
 
\begin{theorem}\label{thm:vanish}
Assume $\mu_0$ satisfies 
\begin{align}	\int_{\C}\log^{2+\delta}(1+|w|)\dd\mu_0(w) +\sup_{z\in\C}\int_{\C}|\log(1\wedge|z-w|)|^{2+\delta}\dd\mu_0(w)<\infty.\label{eq:asslocallog}
	\end{align}
 If $t_n\to 0$, then
$$\mu_{t_n,n}\Rightarrow\mu_0, \quad \text{and}\quad \nu_{t_n,n}\Rightarrow\mu_0$$
weakly almost surely. More quantitatively, if $\mu_0$ has bounded density with compact support and $\varphi\in \mathcal C^2_c$, then there exists a constant $c>0$ such that almost surely for sufficiently large $n$,
\begin{align}
	\bigg|\int\varphi\dd\mu_{ t_n,n}-\int \varphi\dd \mu_0\bigg|
	&\le c \bigg(t_n+\sqrt{\frac{\log n}n}\bigg) \label{eq:quantitativebound_mu}\\
	\bigg|\int\varphi\dd\nu_{ t_n,n}-\int \varphi\dd \mu_0\bigg|
&\le c \bigg(t_n\log\big(t_n^{-1}\big)+\sqrt{\frac{\log n}n}\bigg) 	\label{eq:quantitativebound}.
	\end{align}
\end{theorem}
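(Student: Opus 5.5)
We compare logarithmic potentials. For a test function $\varphi\in\mathcal C^2_c$,
$$\int\varphi\,\dd\mu = \frac{1}{2\pi}\int\Delta\varphi(z)\,U_\mu(z)\,\dd z, \qquad U_\mu(z)=\int\log|z-w|\,\dd\mu(w).$$
It therefore suffices to show that
$$\frac1n\log|P_{t_n,n}(z)|-\frac1n\log|P_n(z)| \to 0 \quad\text{and}\quad \frac1{d_n}\log\Big|\frac{Q_{t_n,n}(z)}{n^{\underline{\lfloor t_nn\rfloor}}}\Big|-\frac1n\log|P_n(z)| \to 0$$
in $L^1_{\mathrm{loc}}(\dd z)$ almost surely. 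We also need that $\frac1n\log|P_n|\to U_{\mu_0}$, with rate $\sqrt{\log n/n}$ when tested against $\Delta\varphi$. This last fact is a law of large numbers for the i.i.d.\ summands $\log|z-z_j|$. The local log-integrability in \eqref{eq:asslocallog}, with exponent $2+\delta$, gives uniform moment bounds. A Bernstein/martingale concentration, applied to the smoothed quantity $\int\Delta\varphi(z)\log|z-z_j|\,\dd z$ (bounded for bounded-density compactly supported $\mu_0$), then yields the $\sqrt{\log n/n}$ term almost surely via Borel--Cantelli.

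For the heat flow, we integrate the derivative in time. We write $\partial_s \log P_{s,n} = -\frac{1}{2n}P_{s,n}''/P_{s,n}$. Using the leave-one-out recursion of Proposition~\ref{prop:crucial_rep} (or the analogous representation of the logarithmic potential stated there), the ratio $\frac1n\log|P_{t,n}/P_n|$ can be expressed through $t\,m_{s,n}$ terms. More concretely, I would use the identity $\frac1n\log|P_{t,n}(z)|=\frac1n\sum_k\log|z-z_k-t m^{[k]}_{t,n}(z)|$ plus lower-order corrections, which follows from the commutation identity $e^{-\frac{t}{2n}\partial^2}(z-z_k)=(z-z_k-\frac tn\partial)e^{-\frac{t}{2n}\partial^2}$. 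This gives
$$\Big|\frac1n\log|P_{t,n}|-\frac1n\log|P_n|\Big|\le \frac1n\sum_k\Big|\log\Big|1-\frac{t\,m^{[k]}_{t,n}(z)}{z-z_k}\Big|\Big|.$$
We integrate against $|\Delta\varphi|$ over $z$ and split into $|z-z_k|>\sqrt{t}$ and $|z-z_k|\le \sqrt t$. The first region contributes $O(t)$ once $m^{[k]}_{t,n}$ is locally bounded in $L^1_{\mathrm{loc}}$ (from bounded density and concentration). The second region is controlled by log-integrability of $\log$ over a disc of area $t$, giving $O(t\log t^{-1})$. The linear rate $t_n$ in \eqref{eq:quantitativebound_mu} should come from a finer cancellation: we expand $\log|1-tm/(z-z_k)|=-t\Re\, m/(z-z_k)+O(t^2|\cdot|^2)$ and then integrate by parts against $\Delta\varphi$, which kills the singularity since $1/(z-z_k)$ is locally integrable.

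For differentiation, the analogous identity is $\partial_z\big((z-z_k)R\big)=(z-z_k)R'+R$, iterated $\lfloor tn\rfloor$ times. Here I would proceed one derivative at a time. Each step changes the normalized potential by $\frac1n\log|m_{j,n}(z)|$-type terms, where $m_{j,n}$ is the Stieltjes transform after $j$ derivatives, normalized by the degree. Summing $tn$ steps gives a total of $\frac1n\sum_{j<tn}\log|d_j\,m_{j/n,n}(z)/(n-j)|$. This is bounded by $t\,\sup_j\|\log|m_{j,n}|\|_{L^1(\Delta\varphi)}$, and the latter is $O(\log t^{-1})$ after a truncation argument. The factor $\log t^{-1}$ in \eqref{eq:quantitativebound} reflects the need to control $\log|m|$ from below near zeros of $m$, where $|m|\gtrsim t$ on average.

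The main obstacle is the lower bound on $\log|m_{j,n}|$, that is, showing $\int|\log|m_{j,n}||\,|\Delta\varphi|$ stays $O(\log t^{-1})$ uniformly over all intermediate derivatives, almost surely. I would first attempt this via the Gauss--Lucas type fact that the roots of $Q_{j,n}$ remain in the convex hull, combined with an anti-concentration estimate for $m$ inherited from $\mu_0$ through the leave-one-out stability $m_{j,n}\approx m_{j,n}^{[k]}$.
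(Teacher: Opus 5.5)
Your architecture is different from the paper's. You compare $U_{t_n,n}$ pathwise with $\frac1n\log|P_n|$ and then apply a law of large numbers. The paper instead compares $\E U_{t_n,n}$ with $U_0$, using independence of a freshly added root, and then concentrates $U_{t_n,n}$ around its mean. In your proposal, the two estimates that carry all the weight are not established.

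\textbf{Heat flow.} Your identity $\frac1n\log|P_{t,n}|=\frac1n\sum_k\log|z-z_k-tm^{[k]}_{t,n}|$ is not correct, even up to lower-order terms. The relation $P_{t,n}=P^{[k]}_{t,n}(z-z_k-tm^{[k]}_{t,n})$ holds for each single $k$, but the product of these factors over $k$ is not $P_{t,n}$. The discrepancy is of first order in $t$: formally it doubles the drift $-\frac t2\Re(m_0^2)$ of $U_t$. The exact telescoping is \eqref{eq:crucial_rep}, with nested transforms $\tilde m_k=\frac1n\partial_zD_{t/n}P_{k-1}/D_{t/n}P_{k-1}$ that depend only on $z_1,\dots,z_{k-1}$.

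More seriously, the pathwise bound on $\frac1n\sum_k\int_K|\Delta\varphi|\,\bigl|\log|1-t\tilde m_k/(z-z_k)|\bigr|$ does not go through, for several reasons:
\begin{itemize}
\item Linearising requires $|t\tilde m_k/(z-z_k)|$ to be small. This fails near $z_k$, near the poles of $\tilde m_k$, and near the zeros of $D_{t/n}P_k$.
\item The quadratic remainder $t^2|\tilde m_k|^2|z-z_k|^{-2}$ is not locally integrable.
\item An $L^1_{\mathrm{loc}}$ bound on $\tilde m_k$ does not control $\int_K|\tilde m_k||z-z_k|^{-1}$, which depends on the distance from $z_k$ to the nearest zero of $D_{t/n}P_{k-1}$.
\item On $|z-z_k|\le\sqrt t$, the function $\log|z-z_k-t\tilde m_k|=\log|D_{t/n}P_k|-\log|D_{t/n}P_{k-1}|$ is a difference of potentials of $k$ and $k-1$ zeros. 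It is not $O(t\log t^{-1})$ without the very cancellation you want to prove.
\end{itemize}

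The missing idea is to average over the fresh root before estimating. Given $z_1,\dots,z_{k-1}$, we have $\E[\log|z-t\tilde m_k-z_k|\mid z_1,\dots,z_{k-1}]=U_0(z-t\tilde m_k)$. Since $U_0$ is $\|m_0\|_\infty$-Lipschitz, \eqref{eq:mLp} gives $\|\E U_{t_n,n}-U_0\|_{L^1(K)}\lesssim \frac{t_n}{n^2}\sum_k(k-1)\lesssim t_n$, and all singularities disappear. The fluctuations are then handled by the $t$-independent martingale bound of Lemma~\ref{lem:potential_concentration}.

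Incidentally, your opening time-derivative idea can be completed differently. Symmetrising \eqref{eq:ODE} gives $\frac{\dd}{\dd s}\int\varphi\,\dd\mu_{s,n}=\frac1{n^2}\Re\sum_{j\neq k}\frac{\partial_z\varphi(\xi_j)-\partial_z\varphi(\xi_k)}{\xi_j-\xi_k}$ between collision times. This is deterministically bounded by the Lipschitz constant of $\partial_z\varphi$. That is not the argument you wrote, however.

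\textbf{Differentiation.} Taking one derivative at a time leaves you with $\frac1{d_n}\sum_{j<t_nn}\int\Delta\varphi\log|m_{j,n}|$. The almost sure lower bound on $\int_K\log|m_{j,n}|$, uniform in $j$, is exactly the step you leave open, and your suggested tools do not give it. Gauss--Lucas only localises zeros, and leave-one-out stability is an $L^1$ statement about $m-m^{[k]}$; neither controls how small $|m_{j,n}|$ can be. This is a real obstruction:
\begin{itemize}
\item $m_0$ may vanish on an open set. In the hole of an annulus law, $m_{0,n}$ is of order $n^{-1/2}$.
\item There is no deterministic control. For $P_n=z^n-1$, a single derivative sends all zeros to $0$.
\end{itemize}

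The paper avoids $\log|m|$ entirely by adding one root and one derivative at a time:
$\partial_z^jP_{d_n+j}=j\,\partial_z^{j-1}P_{d_n+j-1}\bigl(1+w_j(z-z_{d_n+j})\bigr)$, with $w_j=\frac1j\partial_z^jP_{d_n+j-1}/\partial_z^{j-1}P_{d_n+j-1}$ independent of $z_{d_n+j}$. This yields \eqref{eq:diff_crucial_rep_U}. The bound $\E|\log|1+w(z-z_k)||\le C_K+\log(1+C_K|w|)$ holds uniformly in $w$, and small $w$ is harmless. Combined with $\E\int_K|w_j|\lesssim d_n/j$, this gives $\lesssim t_n+\frac1{d_n}\log\binom{n}{\lfloor t_nn\rfloor}\lesssim t_n\log t_n^{-1}$. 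So the logarithm comes from large $|w_j|$ at small $j$, not from anti-concentration of $m$.

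Finally, your steps invoke bounded density, but the qualitative claim assumes only \eqref{eq:asslocallog}. You still need a reduction such as Proposition~\ref{prop:diff_regularization}: perturb $z_k\mapsto z_k+\varepsilon X_k$, control the change root by root, and upgrade to almost sure convergence via Efron--Stein.
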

In particular, this gives a rate of convergence of the empirical distributions of $\partial_z^{k_n}P_n$ for $k_n=\lfloor t_n n\rfloor=o(n)$, implying also bounds on bounded-Lipschitz metric and Kolmogorov distance, see Remark \ref{rem:rates}. For general probability distributions $\mu_0$, the qualitative statement has been proven for $k_n=1$ in \cite{K15,totik}, for $k=\mathcal O(1)$ in \cite{Byun}, for $k_n=o(n/\log n)$ in \cite{ANP} and recently for all $k_n=o(n)$ in \cite{zhu}. Under our assumption $\mu_0$ having compactly supported and bounded density, we provide a simple alternative proof of this statement, a first quantitative rate of convergence throughout $k_n=o(n)$ and the analogous result for the heat flow. 
 
For $t_n=1/n$ corresponding to $k_n=1$ differentiation, a related quantitative statement can be found in \cite[Theorem 2.16]{OW20}, where smooth statistics of $\nu_{t,n}-\nu_{0,n}$ are of order $\log n/ n$ almost surely. By the law of iterated logarithm, this implies the rate $\sqrt{\log(\log n)/n}$ to $\mu_0$, while \eqref{eq:quantitativebound} gives the near optimal rate $\sqrt{ \log n /n}$ for much higher derivative orders $k_n=\mathcal O (\sqrt{ n/\log n})$.

Regarding the extensions to non-identically distributed $z_k\sim\mu_0^{(k)}$ from Remark \ref{rem:gen}, it should be emphasized that all $\mu_0^{(k)}$ need to be assumed to be rotationally invariant for statements on $\nu_{t_n,n}$, and, that we do not claim a quantitative bound like \eqref{eq:quantitativebound} in this case.  We will see that in the case of $\mu_0$ being rotationally symmetric or having a bounded density, then \eqref{eq:asslocallog} reduces to \eqref{eq:ass_rot} or \eqref{eq:log-ass}, respectively.

\begin{remark}\label{rem:transport}
 Both transport maps
$
T_t$ and $S_t $
describe motion along straight lines with constant velocities determined by the initial distribution, for as long as the corresponding transport description remains valid. For rotationally invariant initial data, repeated differentiation moves mass radially inward following $R_t$ until it reaches the origin and "dies", cf. \cite{diff-paper,OSteiner,COR23,Steiner19}. Under the heat flow, mass is conserved and the transport goes in straight lines until it develops singularities, cf. \cite{hallho,heatflowrandompoly,GAF-paper,HJK,HK21}

This geometric picture connects several perspectives: the self-consistent equation \eqref{eq:selfconseq} yields a Hamilton–Jacobi equation for the logarithmic potential and a Burgers equation for the Stieltjes transform,
\begin{align}
\partial_tU_t=- (\partial_z U_t)^2, \quad \text{ and }\quad
\partial_tm_t=-m_t\partial_zm_t
\end{align} with holomorphic time derivatives $\partial_t$ and characteristics being the trajectories of $T_t$, cf. \cite[\S 5]{heatflowrandompoly}.
Moreover, $T_t^{-1}$ can be viewed as the subordination function for the free convolution \eqref{eq:boxplussc} with the semicircle law, see \cite[\S 6]{heatflowrandompoly}, and, $T_t$ turns out to be $2$-Wasserstein optimal for isotropic $\mu_0$ iff $\mu_0$ is a circular law. 

The vanishing-time regime also has a microscopic interpretation. A critical point $\xi_j$ near a simple root $z_j$ formally satisfies
$$
\xi_j-z_j
=-\bigg(\sum_{k\ne j}\frac1{\xi_j-z_k}\bigg)^{-1}
\approx-\frac1{n\,m_0(z_j)},
$$
see for instance \cite[Idea 1.2]{diff-paper} for more details. Since $k=1$ derivative corresponds to $t_n=1/n$, this predicts the initial velocity $-1/m_0(z_j)$. For the heat flow, simple zeros satisfy the exact system of ordinary differential equations
\begin{align}\label{eq:ODE}
\frac{d}{dt}z_j(t)
=\frac1n\sum_{k\ne j}\frac1{z_j(t)-z_k(t)}, \quad j=1,\dots,n,
\end{align}
see \cite{heatflowrandompoly,tao_blog1,tao_blog2,rodgers_tao}. Again, the initial velocity formally becomes $m_0(z_j)$ in the hydrodynamic limit. These velocities agree with $\partial_tS_t|_{t=0}$ and $\partial_tT_t|_{t=0}$, and motivate Theorem \ref{thm:vanish}. 
\end{remark}
\begin{problem}[Microscopic line formation]\label{prob}
	This work concerns the macroscopic limit $\mu_t$ of the empirical root distribution $\mu_{t,n}$. As discussed in Section \ref{sec:backconj}, the universality conjectures in \cite{hallho,heatflowrandompoly}
	predict that, under suitable assumptions, this limit depends only on the initial limiting distribution $\mu_0$ and not on the microscopic arrangement of the roots.
	
	However, it is clear from the simulations in Figure \ref{fig:FIGUREWHATELSE} that the microscopic point process changes drastically under this evolution: The \emph{line formation phenomenon} under the evolutions makes the zeros highly dependent, as it has been observed in \cite{heatflowrandompoly,HK21}. A closer inspection of the illustrations reveals that these lines form from clusters of initial roots. An extreme case of initially clustered roots is provided by high powers of a fixed polynomial, where roots coincide at finitely many locations--a setting where the limiting distributions indeed turn out to be supported on curves, see \cite{HJK} for the heat flow and \cite{BHS24} for repeated differentiation.
	
	It remains to give a rigorous explanation and microscopic description of this line formation phenomenon for i.i.d.~initial roots, under the heat flow and analogously under repeated differentiation. A heuristic local explanation can be found in \cite[Remark 2.11]{HJK}, based on \eqref{eq:ODE}.
\end{problem} 


\section*{Acknowledgements and AI-statement}
I thank Brian Hall for his question on the case $t\to 0$, which eventually led to Theorem \ref{thm:vanish} and I thank Sean O'Rourke for his cooperation when coordinating our releases. The author has been financially supported by the DFG priority program SPP 2265 Random Geometric Systems.

This work is developed by the author and human written, but AI-assisted. ChatGPT~5.6 (later 6) was used for brainstorming on the prompted approach, parts of the drafting and a final proof-check. Whenever any steps of proofs have been suggested by the AI, they have been thoroughly humanized, significantly simplified and technically detechnified\footnote{AI was used to improve this wording.}.

Funny times we are living in, where I am writing a statement to be human.

\section{Heat flow}\label{sec:heat flow}
\subsection{Recursion formulas}\label{subsec:recursion}
For any $t\ge0$ define flow operator and polynomials
$$
D_{t/n}:=\exp\left(-\frac{t}{2n}\partial_z^2\right),
\qquad P_{t,n}^{[k]}(z)=D_{t/n} \prod_{{j\le n,j \neq k}}(z-z_j)$$
and denote $P_{n}^{[k]}:=P_{0,n}^{[k]}$. We shall work with 
\emph{Stieltjes transform}  given by
\begin{align}\label{eq:m_tn}
	m_{t,n}(z):=\frac 1 n \frac{P'_{t,n}(z)}{P_{t,n}(z)}= \frac{1}{n}\sum_{k=1}^n\frac{P_{t,n}^{[k]}(z)}{P_{t,n}(z)},
	\end{align}
where the last equation follows from the product rule and commutation of $D_{t/n}$ and $\partial_z$. In the following we will use the notation $$m_{t,n}^{[k]}(z):=\frac{1}{n}\frac{P^{[k]\prime}_{t,n}(z)}
{P_{t,n}^{[k]}(z)},$$ where we emphasize that the normalization is notationally helpful, but has unconventional normalization for a Stieltjes transform.
The first crucial observation is the following representation.
\begin{prop}\label{prop:crucial_rep}
 For almost all $z$, we have
\begin{align}
	m_{t,n}(z)=\frac1n\sum_{k=1}^n
	\frac{1}{z-z_k-tm_{t,n}^{[k]}(z)}.\label{eq:crucial_rep_m}
\end{align}
\end{prop}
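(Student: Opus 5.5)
The identity should follow from a commutation relation between multiplication by a linear factor and the heat operator $D_{t/n}=\exp(-\frac{t}{2n}\partial_z^2)$. For any polynomial $R$ and any $a\in\C$,
\begin{align}
	D_{t/n}\big((z-a)R\big)=(z-a)D_{t/n}R-\frac{t}{n}\,\partial_z D_{t/n}R.\label{eq:commute_plan}
\end{align}
To verify \eqref{eq:commute_plan}, I would use the Leibniz-type identity $[\partial_z^2,z]=2\partial_z$. By induction this gives $[\partial_z^{2j},z]=2j\,\partial_z^{2j-1}$. Summing the terminating exponential series then yields $[D_{s},z]=-s\,\partial_z D_{s}$ with $s=t/n$. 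Equivalently, one can note that $D_s z D_s^{-1}=z-s\partial_z$, which is the standard Weyl-algebra conjugation computed from $e^{A}Be^{-A}=B+[A,B]+\dots$, where the series stops after the first commutator because $[\partial_z^2,z]$ commutes with $\partial_z^2$. Either way this is a finite, purely algebraic computation on polynomials, so there is no convergence issue.

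Next, I would apply \eqref{eq:commute_plan} with $a=z_k$ and $R=\prod_{j\neq k}(z-z_j)$, so that $(z-z_k)R=P_n$. Since $D_{t/n}R=P^{[k]}_{t,n}$, this gives
$$P_{t,n}(z)=(z-z_k)P^{[k]}_{t,n}(z)-\frac tn P^{[k]\prime}_{t,n}(z)=P^{[k]}_{t,n}(z)\big(z-z_k-t\,m^{[k]}_{t,n}(z)\big),$$
where the last step uses the normalization $m^{[k]}_{t,n}=\frac1n P^{[k]\prime}_{t,n}/P^{[k]}_{t,n}$, valid wherever $P^{[k]}_{t,n}(z)\neq0$. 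Hence
$$\frac{P^{[k]}_{t,n}(z)}{P_{t,n}(z)}=\frac{1}{z-z_k-t\,m^{[k]}_{t,n}(z)}.$$
Substituting this into the second expression for $m_{t,n}$ in \eqref{eq:m_tn} and summing over $k$ gives \eqref{eq:crucial_rep_m}. That second expression rests on $\partial_z D_{t/n}=D_{t/n}\partial_z$ and the product rule $P_n'=\sum_k P_n^{[k]}$, both already noted in the text.

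The only remaining point is the meaning of ``for almost all $z$''. The polynomials $P_{t,n}$ and $P^{[k]}_{t,n}$ are monic of degrees $n$ and $n-1$, because $D_{t/n}$ preserves the leading coefficient. So the union of their zero sets is finite, and off this finite set every quotient above is well defined. Off this set the denominator $z-z_k-tm^{[k]}_{t,n}(z)=P_{t,n}/P^{[k]}_{t,n}$ is nonzero, so no division by zero occurs. I expect no real obstacle; the one step needing care is getting the sign and the factor $t/n$ right in \eqref{eq:commute_plan}, which I would confirm on $R=1$: there $D_{t/n}(z-a)=z-a$, consistent with $\partial_z D_{t/n}1=0$.
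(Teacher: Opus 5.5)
Your proposal is correct and follows the paper's proof. The paper also derives the commutation identity $D_{t/n}(zf)=(z-\frac tn\partial_z)D_{t/n}f$ (Lemma~\ref{lem:commu}) from $\partial_z^{2j}(zf)=z\partial_z^{2j}f+2j\partial_z^{2j-1}f$, obtains $P_{t,n}=P^{[k]}_{t,n}\bigl(z-z_k-tm^{[k]}_{t,n}\bigr)$, and substitutes this into $m_{t,n}=\frac1n\sum_k P^{[k]}_{t,n}/P_{t,n}$. One minor remark: your sanity check with $R=1$ cannot detect a wrong sign or factor, since both sides agree trivially. The case $R=z$ would test it, via $D_{t/n}z^2=z^2-\frac tn$, though your commutator computation already settles the sign and factor.
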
 
This proposition follows immediately from the following control of the effect of multiplication by $z-z_j$ under the heat flow and its commutation.

\begin{lemma}[Commutation and Recursion]\label{lem:commu}
For any polynomial $f$ it holds
\begin{align*}
	D_{t/n}(zf(z))=\left(z-\frac{t}{n}\partial_z\right)D_{t/n}f.
\end{align*}
This implies the following recursion
\begin{align}\label{eq:recursion}
	P_{t,n}
 =P_{t,n}^{[k]}(z)
	\left(z-z_k-\frac{t}{n}\frac{P^{[k]\prime}_{t,n}(z)}
	{P_{t,n}^{[k]}(z)}\right), \qquad 1\le k \le n.
\end{align}
\end{lemma}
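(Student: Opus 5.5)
The commutation identity is a statement about operators on polynomials, and I will prove it at the level of power series. Write $a:=-\frac{t}{2n}$, so that $D_{t/n}=e^{a\partial_z^2}=\sum_{m\ge0}\frac{a^m}{m!}\partial_z^{2m}$. This is a terminating sum on any polynomial. Let $M$ denote multiplication by $z$. The canonical commutation relation $[\partial_z,M]=\mathrm{id}$ gives by induction
$$[\partial_z^{2m},M]=2m\,\partial_z^{2m-1}.$$
Equivalently, Leibniz' rule gives $\partial_z^{2m}(zf)=z\partial_z^{2m}f+2m\,\partial_z^{2m-1}f$. Summing over $m$ yields
$$D_{t/n}(zf)=zD_{t/n}f+\sum_{m\ge1}\frac{a^m}{(m-1)!}\,2\partial_z^{2m-1}f=zD_{t/n}f+2a\,\partial_z D_{t/n}f.$$
Here I used the commutation of $\partial_z$ with $D_{t/n}$, which is clear since both are power series in $\partial_z$. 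Since $2a=-t/n$, this is exactly $D_{t/n}(zf)=(z-\frac tn\partial_z)D_{t/n}f$.

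For the recursion \eqref{eq:recursion}, I apply the identity to $f=P_n^{[k]}$. Because $P_n=(z-z_k)P_n^{[k]}$ and $D_{t/n}$ is linear, I get
$$P_{t,n}=D_{t/n}(zP_n^{[k]})-z_kD_{t/n}P_n^{[k]}=\Big(z-z_k-\frac tn\partial_z\Big)P_{t,n}^{[k]}.$$
On the set where $P_{t,n}^{[k]}(z)\neq0$ I then factor out $P_{t,n}^{[k]}(z)$, which produces the stated form. Viewed as a polynomial identity it holds everywhere once the quotient is cleared, and the excluded set is only a finite set of zeros.

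No step here is a genuine obstacle. The only point needing care is the bookkeeping of the factor $2m$ against $m!$ in the series, which gives the coefficient $2a=-t/n$, together with the remark that all series terminate, so no convergence issues arise.

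Proposition \ref{prop:crucial_rep} then follows directly. Dividing \eqref{eq:recursion} by $P_{t,n}^{[k]}$ and using the normalization of $m_{t,n}^{[k]}$ gives $P_{t,n}/P_{t,n}^{[k]}=z-z_k-tm_{t,n}^{[k]}$. Inserting this into \eqref{eq:m_tn} gives \eqref{eq:crucial_rep_m}, valid away from the finitely many zeros involved.
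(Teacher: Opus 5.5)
Your proof is correct and follows essentially the same route as the paper. Both use the Leibniz identity $\partial_z^{2m}(zf)=z\partial_z^{2m}f+2m\,\partial_z^{2m-1}f$, reindex the terminating series to extract $-\frac tn\partial_z D_{t/n}f$, and then apply the commutation identity to $P_n=(z-z_k)P_n^{[k]}$ via linearity of $D_{t/n}$ to obtain the recursion.
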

\begin{proof} Follows from a direct calculation, starting with
$$
\partial_z^{2j}(zf)=z\partial_z^{2j}f
   +2j\partial_z^{2j-1}f,\qquad j\geq1.
$$
Consequently,
\begin{align*}
D_{t/n}(zf)
&=\sum_{j=0}^{\infty}\frac{(-t/(2n))^j}{j!}
  \partial_z^{2j}(zf)\\
&=zD_{t/n}f-\frac{t}{n}\partial_z
  \sum_{j=1}^{\infty}\frac{(-t/(2n))^{j-1}}{(j-1)!}
  \partial_z^{2j-2}f\\
&=\left(z-\frac{t}{n}\partial_z\right)D_{t/n}f.
\end{align*}
as claimed. The recursion follows from 
\begin{align*}
P_{t,n}
=D_{t/n}\bigl((z-z_k)P_{n}^{[k]}\bigr) =\left(z-z_k-\frac{t}{n}\partial_z\right)P_{t,n} ^{[k]} 
=P_{t,n}^{[k]}(z)
  \left(z-z_k-\frac{t}{n}\frac{P^{[k]\prime}_{t,n}(z)}
  {P_{t,n}^{[k]}(z)}\right).
\end{align*}
\end{proof}

Besides the Stieltjes transform, also recall the definition of the \textit{logarithmic potential}
$$U_0(z)=\int_{\mathbb C}\log|z-w| \mu_0(dw), \quad z\in\C,$$
satisfying $2\partial_zU_0=m_0$  distributionally.

\begin{prop}
The  logarithmic potential  of the root distribution of the monic polynomial $P_{t,n}$  is given by
$
U_{t,n}(z):=\frac1n\log|P_{t,n}(z)|$
and Lemma \ref{lem:commu} also implies the recursive formula
\begin{align} 
	U_{t,n}(z)=\frac1n\sum_{k=1}^n
\log\left|z-\frac{t}{n}
\frac{\partial_z D_{t/n}P_{k-1}}{D_{t/n}P_{k-1}(z)}-z_k\right|.\label{eq:crucial_rep}
\end{align}
\end{prop}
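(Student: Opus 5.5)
We argue by telescoping over the roots added one at a time. Write $P_{k}(z):=\prod_{j=1}^{k}(z-z_j)$ for $0\le k\le n$, so that $P_0=1$ and $P_n$ is the polynomial of the statement. The heat operator in the proposition is always $D_{t/n}$, with the fixed parameter $t/n$ and not $t/k$. Each $D_{t/n}P_k$ is again a monic polynomial of degree $k$: the operator $D_{t/n}$ fixes the leading term and only lowers the degree in the correction terms. In particular $D_{t/n}P_0=1$ and $D_{t/n}P_n=P_{t,n}$.

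The first claim is immediate. Since $P_{t,n}$ is monic of degree $n$, it factors as $P_{t,n}(z)=\prod_{w}(z-w)$ over its zeros counted with multiplicity. Hence
$$\frac1n\log|P_{t,n}(z)|=\int\log|z-w|\,\mu_{t,n}(\dd w),$$
which is exactly the logarithmic potential of $\mu_{t,n}$.

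For the recursion, apply Lemma \ref{lem:commu} with $f=P_{k-1}$, using $P_k=(z-z_k)P_{k-1}$ and linearity of $D_{t/n}$ on the constant multiple $z_kP_{k-1}$. This gives
$$D_{t/n}P_k=\Bigl(z-z_k-\frac tn\partial_z\Bigr)D_{t/n}P_{k-1}
=D_{t/n}P_{k-1}(z)\left(z-\frac tn\frac{\partial_zD_{t/n}P_{k-1}(z)}{D_{t/n}P_{k-1}(z)}-z_k\right).$$
Take $\log|\cdot|$ and sum over $k=1,\dots,n$. The sum telescopes to $\log|D_{t/n}P_n|-\log|D_{t/n}P_0|=\log|P_{t,n}|$, since $D_{t/n}P_0=1$. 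Dividing by $n$ yields \eqref{eq:crucial_rep}.

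The only point needing care is the division by $D_{t/n}P_{k-1}(z)$. The formula holds pointwise for all $z$ outside the finite union of zero sets of the polynomials $D_{t/n}P_{k-1}$, $k=1,\dots,n$, so it holds for almost every $z$. At any such zero both sides can be read as $-\infty$, or the identity is simply understood almost everywhere, as in Proposition \ref{prop:crucial_rep}. This bookkeeping is the only mild obstacle: the algebra itself is routine given Lemma \ref{lem:commu}.
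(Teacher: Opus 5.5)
Your proof is correct and is exactly the argument the paper intends when it says the formula follows from Lemma \ref{lem:commu}: apply the commutation identity to $f=P_{k-1}$ to get $D_{t/n}P_k=(z-z_k-\frac tn\partial_z)D_{t/n}P_{k-1}$, take $\log|\cdot|$, and telescope from $D_{t/n}P_0=1$ to $D_{t/n}P_n=P_{t,n}$. One small correction to your last paragraph: at a zero of an intermediate $D_{t/n}P_{k-1}$ that is not a zero of $P_{t,n}$, the left-hand side is finite while the right-hand side is an indeterminate $+\infty-\infty$, not $-\infty$, so the almost-everywhere reading you also offer is the correct one.
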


\subsection{Convergence of the Stieltjes transform}
The idea of the proof is simple:
If we assume that the Stieltjes transform $m_t$ of the limit $\mu_t$ of $\mu_{t,n}$ exists, then we may expect
\begin{align}\label{eq:formal_m}
tm_{t,n}^{[k]}(z)
\approx tm_t(z)
\end{align}
and convergence of the sum \eqref{eq:crucial_rep_m} to an integral given by
\begin{align}
	m_{t,n}(z)\approx \int  \frac 1{z-w-tm_t(z)} \mu_0(\dd w)= m_0(z-tm_t(z)).\label{eq:integral_m}
\end{align}
Note that the summands in \eqref{eq:crucial_rep_m} are not themselves i.i.d., due to that term \eqref{eq:formal_m}, which on the other hand is independent of $z_k$. The following limit will turn out to imply push-forward representations of the limit distribution $\mu_t$.

\begin{prop}[Limiting self-consistent equation]\label{prop:m_t_limits}
Assume that $\mu_0$ has a bounded density and satisfies the $2+\delta$ logarithmic moment assumption \eqref{eq:log-ass}. Let $t\ge 0$ and $\mathcal D_t\subseteq \C$ be an open domain of points $z\in\mathcal D_t$ such that $T_t:T_t^{-1}(\mathcal D_t)\to \mathcal D_t$ is bijective.
We have convergence of the Stieltjes transforms
\begin{align}\label{eq:U_t_limits}
	m_{t,n}(z)\ton m_t(z):=m_0(T_t^{-1}(z))= m_0(z-tm_t(z))
\end{align}
almost surely in $L^1_{\mathrm{loc}}(\mathcal D_t)$. 
Moreover, if $t_n\to0$ is a vanishing sequence depending on $n$, then $	m_{t_n,n} \to m_0$ almost surely in $L^1_{\mathrm{loc}}$.

In particular, if $m_0$ is Lipschitz continuous with constant $L>0$ and $0\le t<1/L$, then the map $x\mapsto m_0(z-tx)$ is a contraction for every $z\in\C$, and thus, the self-consistent equation \eqref{eq:selfconseq} has a unique solution and $T_t^{-1} z$ is unique. 
\end{prop}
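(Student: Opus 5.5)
Starting from the exact recursion of Proposition \ref{prop:crucial_rep}, we show that the random Stieltjes transform approximately solves the self-consistent equation, and then use stability of that equation on $\mathcal D_t$. For $z$ fixed and, after the fact, $z$ outside a null set, the plan has four steps.

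\emph{Step 1, leave-one-out stability.} We show $m^{[k]}_{t,n}(z)\approx m_{t,n}(z)$ in $L^1_{\mathrm{loc}}$, uniformly in $k$ on average. Here $P^{[k]}_{t,n}$ is a degree $n-1$ polynomial evolved with the same operator $D_{t/n}$. Its normalized Stieltjes transform $m^{[k]}_{t,n}$ is the Stieltjes transform of a measure of mass $(n-1)/n$. By the recursion \eqref{eq:recursion}, $m_{t,n}-m^{[k]}_{t,n}=\frac1n\partial_z\log\bigl(z-z_k-tm^{[k]}_{t,n}\bigr)$. Integrated against a test function this is $O(1/n)$, because $\log$ of a linear perturbation is locally integrable. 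For the $L^1_{\mathrm{loc}}$ bound we use that $\frac1{z-\zeta}$ is locally integrable uniformly in $\zeta$.

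\emph{Step 2, concentration.} We show $m_{t,n}-\E m_{t,n}\to0$ almost surely in $L^1_{\mathrm{loc}}$, or equivalently the same for $U_{t,n}$. The tool is the martingale decomposition over revealing $z_1,\dots,z_n$, i.e.\ the Doob martingale, which is natural given \eqref{eq:crucial_rep}. Each increment changes $U_{t,n}$ integrated against a test function by a $\frac1n$-multiple of $\log|z-z_k-\dots|$. Its $(2+\delta)$-th moment is controlled by the logarithmic moment assumption \eqref{eq:log-ass} and the bounded density, the latter giving local log integrability. Burkholder's inequality then gives $\E|X_n|^{2+\delta}\lesssim n^{-1-\delta/2}$, which is summable, so Borel--Cantelli yields almost sure convergence. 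Extending from countably many test functions to all of them uses tightness, obtained from the log moment.

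\emph{Step 3, the limit equation.} By Steps 1--2 we may replace $m^{[k]}_{t,n}(z)$ by a deterministic $g_n(z):=\E m_{t,n}(z)$ up to small $L^1_{\mathrm{loc}}$ error. The summand $\frac1{z-z_k-tg_n(z)}$ is a function of $z_k$ alone, so the law of large numbers and the same concentration give
\[
m_{t,n}(z)\approx\frac1n\sum_k\frac1{z-z_k-tg_n(z)}\approx m_0\bigl(z-tg_n(z)\bigr).
\]
The approximation error involves $\frac{t|m^{[k]}-g_n|}{|z-z_k-tm^{[k]}||z-z_k-tg_n|}$. I expect this to be the main obstacle: controlling it needs the bounded density of $\mu_0$ to integrate out the singularities in $z_k$, since $z_k$ is independent of $m^{[k]}$. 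It is this independence that lets us average the singular factor over $z_k$ with density bounded by a constant, giving a bound like $\|m^{[k]}-g_n\|$ times a logarithm.

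\emph{Step 4, identification.} The sequence $g_n$ is precompact in $L^1_{\mathrm{loc}}$, using local integrability and tightness, so every limit $g$ solves $g(z)=m_0(z-tg(z))$. Writing $w=z-tg(z)$ gives $T_t(w)=z$. Bijectivity of $T_t$ on $T_t^{-1}(\mathcal D_t)$ then forces $w=T_t^{-1}(z)$, hence $g=m_0\circ T_t^{-1}$. In the Lipschitz case with $tL<1$ we instead use the Banach fixed-point theorem, which gives both uniqueness and quantitative stability of approximate solutions. For $t_n\to0$ we have $t_n g_n\to0$ in $L^1_{\mathrm{loc}}$, and continuity of translation in $L^1_{\mathrm{loc}}$ gives $m_0(\cdot-t_ng_n)\to m_0$.
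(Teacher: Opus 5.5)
Your proposal is correct and has the same architecture as the paper's proof: leave-one-out stability, martingale concentration, an averaged self-consistent equation, uniqueness on $\mathcal D_t$ from bijectivity of $T_t$, and Banach's fixed point theorem when $tL<1$. Each technical step is carried out differently, though.

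For Step 1, the paper bounds $\E\int_K|m_{t,n}-m^{[k]}_{t,n}|^{p_0}\,\dd\mathcal L$ directly from \eqref{eq:lol_difference}. It uses subadditivity for $p_0<1$, integrates out $z_k$, and bounds the $\partial_z m^{[k]}_{t,n}$-term by $n^{-2p_0}\sum_\xi|z-\xi|^{-2p_0}$, which gives $n^{-p_0}+n^{1-2p_0}$; it then interpolates to $L^1$ by H\"older. Your weak-form route also works, but two points should be made explicit. First, the $O(1/n)$ bound needs $\E\int_K\bigl|\log|z-z_k-tm^{[k]}_{t,n}(z)|\bigr|\,\dd\mathcal L(z)=O(1)$, which follows by integrating out $z_k$ first (bounded density and log moment) and then using \eqref{eq:mLp}. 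Second, the upgrade to $L^1(K)$ must use that $m_{t,n}-m^{[k]}_{t,n}$ is the Cauchy transform of a signed measure of total variation at most $2$, with the kernel regularized at scale $\varepsilon$. Termwise local integrability of $1/(z-\zeta)$ over the $2n-1$ poles only gives $O(1)$.

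For Step 2, the paper proves a Bernstein-type exponential martingale bound for compactly supported $\mu_0$. It then removes compact support by resampling roots outside $B_{R_n}$ with $R_n=\exp(n^{1/(2+\delta)})$. Your Burkholder bound, fed by $\sup_a\E|\log|a-z_1|-U_0(a)|^{2+\delta}<\infty$, gives $n^{-1-\delta/2}$ directly and avoids truncation; the paper uses the same idea via Efron--Stein in Proposition \ref{prop:diff_regularization}. The paper's route, in return, also yields the $\sqrt{\log n/n}$ rate used in Theorem \ref{thm:vanish}.

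For Step 3, the obstacle you anticipate does not arise in the paper. Integrating out $z_1$ in \eqref{eq:crucial_rep_m} gives the exact identity $\E m_{t,n}(z)=\E m_0(z-tm^{[1]}_{t,n}(z))$. So only uniform continuity of $m_0$ is needed to pass to the limit, with no deterministic proxy $g_n$ and no law of large numbers. Your estimate $\E_{z_k}\bigl[|z-z_k-tm^{[k]}_{t,n}|^{-1}|z-z_k-tg_n|^{-1}\bigr]\lesssim 1+\log_+\bigl(1/(t|m^{[k]}_{t,n}-g_n|)\bigr)$ is in effect a hands-on proof of the $x\log(1/x)$ modulus of continuity of $m_0$. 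Together with Jensen's inequality for the concave function $x\mapsto x(1+\log(1+1/x))$, it closes the step equally well.
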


In particular, this proves the qualitative statement of Theorem \ref{thm:vanish} for bounded densities, and, moreover $\|m_t\|_\infty\leq\|m_0\|_\infty$.

The bijectivity condition assures that for each $z\in\mathcal D_t$ there is exactly one preimage in the set theoretic inverse $T_t^{-1}(\mathcal D_t)$.
Our proof-approach of reducing the degree of the polynomial by one as in \eqref{eq:formal_m}, to get a self-consistent equation for the Stieltjes transform is standard in random matrix theory. We refer the interested reader to e.g. \cite[\S 2.3]{BS10spectral}: After truncation of the matrix entries, the Schur-complement formula is used to control the characteristic polynomial of a matrix when a row and column is removed, which leads to convergence of the mean Stieltjes transform and the proof follows from concentration around its mean. For the proof of Proposition \ref{prop:m_t_limits}, we follow similar steps and need the following lemmas.

\begin{lemma}[Stability of leave-one-out]
	\label{lem:leave_one_out}
Assume that $\mu_0$ has bounded density. Let $K\subset\mathbb C$ be compact
	, then
	\begin{align}
		\sup_{k\leq n}
		\mathbb E
		\left\|
		m_{t,n}- m_{t,n}^{[k]}
		\right\|_{L^1(K)} 
		\longrightarrow0.
		\label{eq:lol_all_p}
	\end{align}
\end{lemma}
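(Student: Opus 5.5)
The plan is to exploit the exact factorisation in Lemma \ref{lem:commu}, the independence of $z_k$ from $P^{[k]}_{t,n}$, and the fact that $z\mapsto z-t m^{[k]}_{t,n}(z)$ is a rational map of degree $n$.

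\textbf{Step 1: an exact formula.} Put
\[
F_k(z):=z-t\,m^{[k]}_{t,n}(z), \qquad g_k(z):=F_k(z)-z_k .
\]
Then \eqref{eq:recursion} reads $P_{t,n}=P^{[k]}_{t,n}\,g_k$. Taking logarithmic derivatives gives, away from the finitely many roots and poles,
\[
m_{t,n}(z)-m^{[k]}_{t,n}(z)=\frac1n\,\frac{g_k'(z)}{g_k(z)}=\frac1n\,\frac{F_k'(z)}{F_k(z)-z_k}.
\]
Here $F_k$ is a rational (meromorphic) function of degree at most $n$: it has the $n-1$ roots $b_j$ of $P^{[k]}_{t,n}$ as simple poles, plus the pole at $\infty$. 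It depends only on $(z_j)_{j\ne k}$ and is therefore independent of $z_k$. By exchangeability the law of $\|m_{t,n}-m^{[k]}_{t,n}\|_{L^1(K)}$ does not depend on $k$, so the supremum over $k$ is harmless.

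\textbf{Step 2: averaging over $z_k$.} Let $\rho$ be the bounded density of $\mu_0$ and fix a truncation radius $R_n$, say $R_n=n$. On the event $\{|z_k|>R_n\}$ I use the crude bound
\[
\frac1n\int_K\Big|\frac{g_k'}{g_k}\Big|\le\frac1n\int_K\Big(\sum_{i=1}^n\frac{1}{|z-a_i|}+\sum_{j\ne k}\frac{1}{|z-b_j|}+C\Big)\dd z\le C_K ,
\]
where $a_i$ are the roots of $P_{t,n}$. This crude estimate is only used on $\{|z_k|>R_n\}$, whose contribution is therefore at most $C_K\,\P(|z_k|>R_n)\to0$.

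On the complementary event, conditioning on $(z_j)_{j\neq k}$ and using Fubini gives
\[
\E\Big[\mathbf 1_{|z_k|\le R_n}\int_K\frac{|F_k'|}{|F_k-z_k|}\Big]\le \E\int_K |F_k'(z)|\,h_n(F_k(z))\,\dd z,
\qquad h_n(x):=\int_{B_{R_n}}\frac{\rho(w)}{|x-w|}\dd w .
\]
Since $\rho$ is bounded and a probability density, $h_n\le C(\|\rho\|_\infty)$ uniformly. Moreover $h_n(x)\le 2/|x|$ for $|x|\ge 2R_n$. Consequently $\int_{\C}h_n^2\,\dd A\lesssim R_n^2+\log(\cdot)$ is finite, with explicit $n$-dependence; see Step 4 for how to balance this.

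\textbf{Step 3: change of variables.} Because $F_k$ is meromorphic, $|F_k'|^2$ is its real Jacobian, and every $w\in\C$ has at most $n$ preimages. Cauchy--Schwarz and the area formula then give
\[
\int_K|F_k'|\,h_n(F_k)\le |K|^{1/2}\Big(\int_K |F_k'|^2h_n(F_k)^2\Big)^{1/2}\le |K|^{1/2}\Big(n\int_{\C}h_n^2\,\dd A\Big)^{1/2}.
\]
After the prefactor $1/n$ this is $O\big((\int h_n^2/n)^{1/2}\big)$. This bound is deterministic in $(z_j)_{j\ne k}$ and hence uniform in $k$.

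\textbf{Step 4: the main obstacle.} The delicate point is balancing the truncation. The tail of $h_n^2$ makes $\int h_n^2$ grow like $R_n^2$ if one is crude, so I would not integrate $h_n^2$ directly at scale $R_n$. Instead I would refine Step 2: split $h_n=h^{(1)}+h^{(2)}$ according to whether $|w|\le M$ or $M<|w|\le R_n$.

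For $h^{(1)}$, one has $\int (h^{(1)})^2\dd A\lesssim M^2+\log$-type terms, independent of $n$. Its contribution is therefore $O(M n^{-1/2})$.

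For $h^{(2)}$, I would not use Cauchy--Schwarz. Instead I would return to the crude pointwise bound of Step 2 applied on the event $\{M<|z_k|\}$, which costs $C_K\,\mu_0(B_M^c)$.

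Letting first $n\to\infty$ and then $M\to\infty$ then yields \eqref{eq:lol_all_p}. Checking that this splitting really decouples, so that the crude $O(1)$ bound can be paired with the event on $z_k$ alone, is the step I expect to need the most care.
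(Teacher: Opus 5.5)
\textbf{The gap is in Step 3: the Cauchy--Schwarz bound there is infinite.} The function $h_n(x)=\int_{B_{R_n}}\rho(w)|x-w|^{-1}\,\dd w$ satisfies $h_n(x)\ge\mu_0(B_{R_n})/(|x|+R_n)$, and the same holds for your $h^{(1)}$ with $M$ in place of $R_n$. Since $\int_{|x|>2R}|x|^{-2}\,\dd\mathcal L(x)=\infty$ in the plane, both $\int_{\mathbb C}h_n^2\,\dd\mathcal L$ and $\int_{\mathbb C}(h^{(1)})^2\,\dd\mathcal L$ are infinite. Your ``$\log$-type term'' is a divergent logarithm, whatever truncation you use in $w$.

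This is not merely a lossy estimate. The poles of $F_k$ are the roots $b_j$ of $P^{[k]}_{t,n}$, which typically lie inside $K$. Near such a pole, $F_k(z)\approx -c_j/(z-b_j)$ with $c_j=tm_j/n>0$, where $m_j$ is the multiplicity. Hence $|F_k'|\,h^{(1)}(F_k)\asymp|z-b_j|^{-1}$, which is integrable, but $|F_k'|^2h^{(1)}(F_k)^2\asymp|z-b_j|^{-2}$, which is not. So $\int_K|F_k'|^2h^{(1)}(F_k)^2\,\dd\mathcal L=\infty$, and Cauchy--Schwarz gives nothing. Your Step 4 splits in $w$, the position of $z_k$. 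The divergence instead comes from the image variable $x=F_k(z)$ being large, i.e.\ from neighbourhoods of the poles. The decoupling you were worried about (handling $h^{(2)}$ by the crude bound on $\{|z_k|>M\}$) is actually fine.

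\textbf{A repair is to cut in the image variable as well.} Let $E=\{z\in K:|F_k(z)|>2M\}$; it depends only on $(z_j)_{j\ne k}$.
On $K\setminus E$, your area-formula argument works with $\int_{B_{2M}}(h^{(1)})^2\lesssim M^2$, which gives $O(M/\sqrt n)$.
On $E$, use your crude bound pointwise. The quantity $\frac1n|g_k'/g_k|$ is at most $\frac1n$ times a sum of $2n-1$ Cauchy kernels, so $\frac1n\int_E|g_k'/g_k|\,\dd\mathcal L\le 2\sup_a\int_E|z-a|^{-1}\,\dd\mathcal L(z)\lesssim|E|^{1/2}$. Chebyshev and \eqref{eq:mLp} give $|E|\le(2M)^{-1}\int_K|z-tm^{[k]}_{t,n}(z)|\,\dd\mathcal L(z)\le C_{K,t}/M$.
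Altogether $\limsup_n\sup_k\E\|m_{t,n}-m^{[k]}_{t,n}\|_{L^1(K)}\lesssim M^{-1/2}+\mu_0(B_M^c)$, and letting $M\to\infty$ finishes the proof; no $R_n$ is needed.

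Alternatively, use H\"older with an exponent $p>2$ on $h^{(1)}$, which makes $\int(h^{(1)})^p$ finite. You then need $\int_K|F_k'|^{s}\lesssim n^{1-s}$ for $s=(p-2)/(p-1)<1$, which is exactly the derivative bound the paper uses.

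With the first repair, your route genuinely differs from the paper's, which never changes variables. The paper applies subadditivity of $x\mapsto x^{p_0}$, $p_0\in(1/2,1)$, to the numerator $1-t\partial_z m^{[k]}_{t,n}$ in \eqref{eq:lol_difference}. It averages over $z_k$ using $\sup_a\E|a-z_k|^{-p_0}<\infty$, and bounds $\int_K|\partial_z m^{[k]}_{t,n}|^{p_0}\lesssim n^{1-p_0}$ to obtain $n^{-p_0}+n^{1-2p_0}$. Finally it interpolates to $L^1$ using a uniform $L^q$ bound with $1<q<2$. Your repaired argument avoids both the derivative estimate and the interpolation, at the cost of the extra cutoff $M$ and the area formula.
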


\begin{lemma}[Concentration of the Stieltjes transform]
	\label{lem:stieltjes_concentration}
	Assume that $\mu_0$ has a bounded density and satisfies the $2+\delta$ logarithmic moment assumption \eqref{eq:log-ass}. Fix $t\ge 0$ and compact $K\subset\mathbb C$, then the sequence $\E\mu_{t,n}$ is tight and
	\begin{align}
		\left\|
		m_{t,n}-\mathbb E m_{t,n}
		\right\| _{L^1(K)}
		\longrightarrow0,
		\label{eq:stieltjes_concentration}
	\end{align}
	$\P$-almost surely and in $L^1(\P)$. Moreover, the statement remains true if $t_n\to0$ is a vanishing sequence depending on $n$.
\end{lemma}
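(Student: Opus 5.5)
We treat the statement in two parts: tightness of $\E\mu_{t,n}$, and concentration of $m_{t,n}$ around its mean. Both are handled through the logarithmic potential, since $m_{t,n}=2\partial_z U_{t,n}$ in the sense of distributions. Concretely, for compact $K$ we bound $\|m_{t,n}-\E m_{t,n}\|_{L^1(K)}$ via the $L^1$ norm of $U_{t,n}-\E U_{t,n}$ on a slightly larger compact set. This uses the standard estimate that, for measures $\mu$ of total mass at most one, the map $U\mapsto\partial_z U$ from $\{U_\mu\}$ to $L^1_{\mathrm{loc}}$ is controlled by the $L^1_{\mathrm{loc}}$ norm of the potential. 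Alternatively, one can test against smooth functions and use local uniform integrability of $1/|z-w|$. Tightness will then follow from a first-moment-type bound on the roots of $P_{t,n}$.

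\textbf{Tightness.} The roots of $P_{t,n}=D_{t/n}P_n$ are controlled by the coefficients. The coefficient of $z^{n-j}$ is $e_j$ plus heat corrections of the form $(t/n)^i\binom{n-j+2i}{2i}\tfrac{(2i)!}{2^i i!}e_{j-2i}$, where the $e_j$ are the signed elementary symmetric functions of the $z_k$. I would use the crude bound $\max|\text{root}|\le 2\max_j|a_{n-j}|^{1/j}$. A cleaner route is to note that $\int\log(1+|w|)\,\dd\mu_{t,n}$ is bounded by the same quantity for $\mu_{0,n}$ plus $O(\sqrt t)$. This follows from Jensen's formula together with a Mahler-measure comparison: $D_{t/n}$ multiplies the Mahler measure by at most $e^{O(t)n}$, which one sees via the Gaussian-integral representation $P_{t,n}(z)=\E P_n(z+i\sqrt{t/n}\,G)$. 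Combined with the law of large numbers for $\frac1n\sum\log(1+|z_k|)$, this gives uniform log-moments and hence tightness of $\E\mu_{t,n}$, uniformly for $t$ in compacts, including $t_n\to0$.

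\textbf{Concentration.} Fix $z$ and write $U_{t,n}(z)-\E U_{t,n}(z)=\sum_k(\E_k-\E_{k-1})U_{t,n}(z)$ as a Doob martingale in the filtration of $z_1,\dots,z_k$. By the recursion \eqref{eq:recursion}, replacing $z_k$ by an independent copy $z_k'$ changes $nU_{t,n}(z)$ by
\[
\log\bigl|z-z_k-tm^{[k]}_{t,n}(z)\bigr|-\log\bigl|z-z_k'-tm^{[k]}_{t,n}(z)\bigr|,
\]
where $m^{[k]}_{t,n}$ does not depend on $z_k$. Hence each martingale increment is bounded by $\frac1n$ times $|\log|z-z_k-a||+\E|\log|z-z_k'-a||$ with $a$ independent of $z_k$. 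For the $2+\delta$ moment, the bounded density of $\mu_0$ handles the singularity near $z-a$ uniformly in $a$, and \eqref{eq:log-ass} handles large $|z_k|$, at least when $|a|$ is controlled. Burkholder's inequality then gives $\E|U_{t,n}(z)-\E U_{t,n}(z)|^{2+\delta}\lesssim n^{-(1+\delta/2)}$, uniformly for $z\in K$. Integrating over $K$ and summing over $n$ yields almost sure convergence by Borel--Cantelli; this is exactly where the exponent $2+\delta$ is needed. The argument transfers to $m_{t,n}$ as described above, with convergence in $L^1(\P)$ following from uniform integrability of the same moment bound.

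\textbf{Main obstacle.} I expect the hardest step to be controlling the random shift $a=tm^{[k]}_{t,n}(z)$. It could be large near roots of $P^{[k]}_{t,n}$, and then $\log|z-z_k-a|$ would require log-moments at an uncontrolled point. I would first try truncation: on $\{|a|\le M\}$ the bounds above are uniform in $a$. For the complementary event, I would use the pointwise inequality $\log|x-a|\le\log(1+|x|)+\log(1+|a|)$ together with $L^1(K)$-integrability in $z$ of $\log(1+|m^{[k]}_{t,n}(z)|)$, which holds because $m^{[k]}_{t,n}$ is a sum of Cauchy kernels at roots lying in a tight region. The lower tail of $\log|\cdot|$ is harmless after integrating in $z$, since $\int_K|\log|z-b||^{p}\,\dd z$ is bounded uniformly in $b$.
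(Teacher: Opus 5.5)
Your proof is correct, but it obtains the key input by a genuinely different route. That key input is the concentration of $U_{t,n}$ together with tightness, i.e.\ the paper's Lemma \ref{lem:potential_concentration}. The final transfer to $m_{t,n}$ is the same as the paper's: the Poisson equation, bounded total variation of $\mu_{t,n}-\E\mu_{t,n}$, and the deterministic bound \eqref{eq:mLp} for convergence in $L^1(\P)$. One caveat there: $\partial_z U$ is not linearly controlled by $U$ in $L^1_{\mathrm{loc}}$. It is controlled only qualitatively, or with a square-root modulus after mollifying, which is all you need.

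The paper works in three steps. It first assumes compact support, where the martingale increments satisfy a Bernstein condition and give exponential concentration. It takes tightness from Walsh's bound for truncated roots plus a resampling estimate. It then removes compact support by resampling roots outside $B_{R_n}$, with $R_n=\exp(n^{1/(2+\delta)})$, and uses Borel--Cantelli to show that eventually no root is resampled.

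You avoid truncation entirely. For tightness, Jensen's formula and the Gaussian representation give $\int\log^+|w|\,\dd\mu_{t,n}\le\frac1n\sum_k\log(1+|z_k|)+\frac t2+\frac{\log 2}{n}$, which suffices. Your sharper Mahler-measure claim with factor $e^{O(t)n}$ is not right as stated: $P_n=(z-1)^n$ gives a factor $e^{c\sqrt t\,n}$. It is not needed, though.

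For concentration, your ``main obstacle'' disappears once you center at $U_0(w)$. The inequality $|\log|w-x|-\log(1+|w|)|\le\log(2+|x|)+|\log(1\wedge|w-x|)|$, bounded density and \eqref{eq:log-ass} give $\sup_{w\in\C}\E|\log|w-z_1|-U_0(w)|^{2+\delta}<\infty$. Hence the increments $\frac1n\E[\log|w-z_k|-U_0(w)\mid\mathcal F_k]$, with $w=z-tm^{[k]}_{t,n}(z)$, have $(2+\delta)$-th moments $O(n^{-2-\delta})$ uniformly in the random shift. Burkholder then yields $\E|U_{t,n}(z)-\E U_{t,n}(z)|^{2+\delta}\lesssim n^{-1-\delta/2}$ uniformly in $z$ and $t$, so also for $t_n\to0$, without your truncation in $|a|$.

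Your route is shorter and shows exactly where $2+\delta$ enters, namely summability of $n^{-1-\delta/2}$; it is close to the Efron--Stein alternative the paper itself mentions. The paper's route buys exponential tails under compact support. That gives the rate $\sqrt{\log n/n}$ in \eqref{eq:Uquanti} used for Theorem \ref{thm:vanish}, which a fixed-order moment bound does not directly give.
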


During the upcoming proofs, we will make use of the following well known facts.
 
\begin{lemma} 
	Let $\mu$ be a signed Borel measure on $\C$ with total variation bounded by $2$,
	and let $K\subseteq\C$ be compact. Then
	\begin{align}
		\|m_\mu\|_{L^p(K)}\leq C_{K,p},
		\qquad 0<p<2, \label{eq:mLp}
	\end{align}
where $C_{K,p}$ depends only on $K$ and $p$.	If $\int_\C\log(1+|w|)|\mu|(\dd w)<\infty$, then
	\begin{align}
		\|U_\mu\|_{L^p(K)}
		\leq C_{K,p}\left(1+\int_\C\log(1+|w|)|\mu|(\dd w)\right),
		\qquad 0<p<\infty. \label{eq:ULp}
	\end{align}
	Moreover, a sequence of such measures converges vaguely $\mu_n\to\mu$ iff $m_{\mu_n}\to m_\mu$ in $L_{\mathrm{loc}}^1$.
\end{lemma}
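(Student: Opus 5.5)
The plan is to prove the three statements separately, treating the kernels $1/(z-w)$ and $\log|z-w|$ as $w$-indexed families of functions of $z$. For each family I bound the $L^p(K)$-norm uniformly in $w$, then integrate against $|\mu|$.

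\emph{Step 1: bound \eqref{eq:mLp}.} Fix $0<p<2$. By rearrangement, for every $w\in\C$,
\[
\int_K |z-w|^{-p}\,\dd A(z)\le \int_{B_r}|u|^{-p}\,\dd A(u)=\frac{2\pi r^{2-p}}{2-p},\qquad \pi r^2=|K|.
\]
So $\sup_w\|(\cdot-w)^{-1}\|_{L^p(K)}\le C_{K,p}$. For $1\le p<2$, Minkowski's integral inequality gives
\[
\|m_\mu\|_{L^p(K)}\le\int\|(\cdot-w)^{-1}\|_{L^p(K)}\,|\mu|(\dd w)\le 2C_{K,p}.
\]
For $0<p<1$, Hölder on the finite-measure set $K$ gives $\|m_\mu\|_{L^p(K)}\le |K|^{1/p-1}\|m_\mu\|_{L^1(K)}$, which reduces to the case $p=1$.

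\emph{Step 2: bound \eqref{eq:ULp}.} Split the kernel as
\[
|\log|z-w||\le |\log|z-w||\,\mathbf 1_{|z-w|<1}+\log(1+|z|)+\log(1+|w|).
\]
The first term has $L^p(K)$-norm bounded uniformly in $w$, since $|\log|u||^p$ is integrable near $0$. The second is bounded on $K$. The third is constant in $z$, so its $L^p(K)$-norm is $|K|^{1/p}\log(1+|w|)$. Integrating against $|\mu|$ with Minkowski (for $p\ge1$), respectively Hölder as in Step 1 (for $p<1$), yields \eqref{eq:ULp}. This uses the bound $|\mu|(\C)\le 2$ and the assumed logarithmic moment.

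\emph{Step 3: vague convergence implies $L^1_{\mathrm{loc}}$ convergence.} This is the only step needing care, because weak $L^p$ compactness alone does not give strong convergence. I split the Cauchy kernel as $1/z=k_\varepsilon+r_\varepsilon$, where $k_\varepsilon$ is continuous, agrees with $1/z$ for $|z|\ge\varepsilon$, and satisfies $|k_\varepsilon|\le1/\varepsilon$, and where $r_\varepsilon$ is supported in $B_\varepsilon$ with $|r_\varepsilon|\le 2/|z|$.

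The function $k_\varepsilon$ lies in $C_0(\C)$, so $w\mapsto k_\varepsilon(z-w)$ can be uniformly approximated by $C_c$ functions. Together with the uniform total-variation bound, this gives $k_\varepsilon*\mu_n(z)\to k_\varepsilon*\mu(z)$ for every $z$. Since $|k_\varepsilon*\mu_n|\le 2/\varepsilon$, dominated convergence upgrades this to convergence in $L^1(K)$.

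The remainder is uniformly small:
\[
\|r_\varepsilon*\mu_n\|_{L^1(K)}\le 2\,\|r_\varepsilon\|_{L^1}\le 8\pi\varepsilon,
\]
and the same bound holds for $\mu$. Letting $n\to\infty$ and then $\varepsilon\to0$ gives $m_{\mu_n}\to m_\mu$ in $L^1(K)$.

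\emph{Step 4: the converse.} If $m_{\mu_n}\to m_\mu$ in $L^1_{\mathrm{loc}}$, then for every $\varphi\in C_c^\infty$,
\[
\int\varphi\,\dd\mu_n=-\frac1\pi\int\partial_{\bar z}\varphi\; m_{\mu_n}\,\dd A\;\longrightarrow\;-\frac1\pi\int\partial_{\bar z}\varphi\; m_\mu\,\dd A=\int\varphi\,\dd\mu,
\]
using $\partial_{\bar z}m=\pi\mu$ distributionally. By the uniform total-variation bound and density of $C_c^\infty$ in $C_c$ (in sup-norm), this extends to all $\varphi\in C_c$. Hence $\mu_n\to\mu$ vaguely.
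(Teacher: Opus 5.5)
Your proof is correct and follows essentially the same route as the paper. The ingredients match: uniform-in-$w$ $L^p(K)$ bounds on the Cauchy and logarithmic kernels integrated against $|\mu|$ (you use Minkowski where the paper uses Jensen--Fubini, and both use H\"older for $p<1$); a regularized kernel in $C_0$ (such as the paper's $\bar z/(|z|^2\vee\varepsilon^2)$) plus an $\mathcal O(\varepsilon)$ remainder in $L^1(K)$ for the forward implication; and $\partial_{\bar z}m_\mu=\pi\mu$ for the converse. Your rearrangement bound and pointwise splitting of $|\log|z-w||$ are only cosmetic variants of the paper's near/far case distinction and its recentering by $\log(1+|w|)$. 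You also helpfully make explicit where the uniform total-variation bound is used, namely in extending vague convergence from $C_c$ to $C_0$ and from $C_c^\infty$ to $C_c$.
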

The signed case will be used below for $\mu_{t,n}-\E\mu_{t,n}$ and, note that if $\mu$ has compact support, then \eqref{eq:ULp} is bounded. 
In particular, Lemma \ref{lem:leave_one_out} and Lemma \ref{lem:stieltjes_concentration} continue to hold for $L^p(K)$, $0<p<2$, but we will not need this.

\begin{proof}
Regarding the bounds, observe that
\begin{align*}
	\sup_{w\in\C}\int_K|z-w|^{-p}\dd\mathcal L(z)<\infty
	&\quad 0<p<2 ,\\
	\sup_{w\in\C}\int_K
	\left|\log|z-w|-\log(1+|w|)\right|^p\dd\mathcal L(z)<\infty
	&\quad 0<p<\infty .
\end{align*}
	Indeed, choose $R\geq1$ with $K\subseteq B_R$, such that for $|w|\leq2R+1$,
	the above follow from local integrability, while for $|w|>2R+1$, both kernels are uniformly bounded on $K$.
	Jensen's inequality for $\mu/|\mu|$ and Fubini's Theorem now prove the claims \eqref{eq:mLp}  and \eqref{eq:ULp} for $p\geq1$, applying the second estimate to
	$U_\mu-\int\log(1+|w|)\,\mu(\dd w)$.
	For $0<p<1$, use
	$\|f\|_{L^p(K)}\leq\mathcal L(K)^{1/p-1}\|f\|_{L^1(K)}$. For convergence statement, note that the regularized Cauchy kernel $\varphi_\varepsilon(z)=\frac{\bar z}{|z|^2\vee\varepsilon^2}$ is continuous and vanishes at $\infty$, hence vague convergence implies $\int \varphi_\varepsilon (z-w)\dd \mu_n(w)\to\int \varphi_\varepsilon (z-w)\dd \mu(w)$. Together with the uniform bound $\|m_{\mu_n}- \int \varphi_\varepsilon (z-w)\dd \mu_n(w)\|_{L^1(K)}\lesssim \varepsilon$, we obtain $L^1_{\mathrm{loc}}$ convergence by first $n\to\infty$, then $\varepsilon\to 0$. The converse statement follows directly from the distributional equation $\mu=\frac 1 \pi \partial_{\bar z}m_\mu$.
\end{proof}

Before we turn to the proofs of Lemma \ref{lem:leave_one_out} and \ref{lem:stieltjes_concentration}, let us see them in action.

\begin{proof}[Proof of Proposition \ref{prop:m_t_limits}] 
By Lemma \ref{lem:stieltjes_concentration}, we have tightness of the sequence $\E \mu_{t,n}$ (Note that in the compactly supported setting, this follows immediately from Walsh's bound, see e.g. \cite[Theorem 5.3.1]{rahman_schmeisser}). Thus, we may work with a weakly convergent subsequence $\E \mu_{t,n}\Rightarrow\mu_t$ and $\E m_{t,n}\to m_t$ in $L^1_{\mathrm{loc}}$. For any compact $K\subset \C$ 
	\begin{align*}
		\E\|m_{t,n}^{[1]}-m_t\|_{L^1(K)}\le\E\|m_{t,n}^{[1]}-m_{t,n}\|_{L^1(K)}+\E\|m_{t,n} -\E m_{t,n}\|_{L^1(K)}+\|\E m_{t,n}-m_t\|_{L^1(K)}\to 0
	\end{align*}
by Lemma \ref{lem:leave_one_out} and Lemma \ref{lem:stieltjes_concentration}. Under our bounded density assumption, $m_0$ is bounded and uniformly continuous, hence
	\begin{align}
		\mathbb E\left\|
		m_0(\cdot-tm_{t,n}^{[1]})
		-m_0(\cdot-tm_t)
		\right\|_{L^1(K)}\to0.\label{eq:m0limit}
	\end{align}
	Note that, regarding the statement of $t_n\to 0$, this continues to hold without employing Lemma \ref{lem:leave_one_out}, since $\|t_n m_{t_n,n}^{[1]}\|_{L^1(K)}\le t_n C\to 0$.
	
	On the other hand, independence of $z_k\sim \mu_0$ and \eqref{eq:crucial_rep_m}, implies the averaged identity
	\begin{align}\label{eq:averagedm}
		\E m_{t,n}(z)=\E m_0(z-tm_{t,n}^{[1]}(z)).
	\end{align} 
	By the triangle inequality, we obtain
	\begin{align}
		\left\|
		m_t	-m_0(\cdot-tm_t)
		\right\|_{L^1(K)}\le \left\|	m_t	-\E m_{t,n}	\right\|_{L^1(K)} + \E\left\| m_0(\cdot-tm_{t,n}^{[1]} ) -m_0(\cdot-tm_t)
		\right\|_{L^1(K)}\to0.\label{eq:eintollesDreieck}
	\end{align}
	Thus, our assumption $z\in K\subset \mathcal D_t$ implies uniqueness of the subsequential limiting Stieltjes transforms 
	$$m_t(z)=m_0(z-tm_t(z))\quad\text{ on }\mathcal D_t$$
	This gives convergence
	\begin{align*}
		\E m_{t,n}(z)\to m_0(z-tm_{t}(z))=m_t(z).
	\end{align*} 
	 first in $L^1(K)$. 
	Concentration of Stieltjes transforms from Lemma \ref{lem:stieltjes_concentration} implies
	\begin{align*}
		m_{t,n}(z)\to m_0(z-tm_{t}(z))=m_t(z)
	\end{align*}
	almost surely, in $L ^1(K)$. On the probability-1-event $\cap_{K} \{m_{t,n}\to m_t\text{ in }L^1(K)\}$ for a countable sequence of compact $K\subset\mathcal D_t$ that fill $\mathcal D_t$, we then have convergence in $L_{\mathrm{loc}}^1(\mathcal D_t)$ as claimed. Note that this continues to hold for $t_n\to 0$. Regarding the last statement, assume $m_0$ is Lipschitz continuous with constant $L>0$ and $0\le t<1/L$, then the map $x\mapsto m_0(z-tx)$ is a contraction for every $z\in\C$. Thus, the self-consistent equation \eqref{eq:selfconseq} has a unique solution $m_t(z)$ and $T_t^{-1} z$ is uniquely given by $z-tm_t(z)$.
\end{proof}

\begin{remark}\label{rem:formalU}
Assuming that the whole path of Stieltjes transforms  $(m_{\alpha t})_{\alpha\le 1}$ exists, then similarly to our approach we may also expect the sum \eqref{eq:crucial_rep} to be replaceable by an integral
	\begin{align}
		U_{t,n}(z)\approx\int_0^1\int_{\mathbb C}
		\log|z-\alpha t m_{\alpha t}(z)-w| \mu_0(dw) d\alpha  
		=\int_0^1 U_0\bigl(z-\alpha t m_{\alpha t}(z)\bigr) d\alpha 
		.\label{eq:integral_U}
	\end{align}
	It seems to be possible to also rigorously show \eqref{eq:integral_U} and derive the statement of Proposition \ref{prop:m_t_limits} from it using PDE-techniques (more precisely differentiating w.r.t $t$ and $z$ and solving the resulting PDE by method of characteristics). But since \eqref{eq:integral_U} needs existence of the whole path $(m_{\alpha t})_{\alpha \le 1}$ and would need additional PDE-arguments afterwards, we preferred to directly work with \eqref{eq:integral_m} instead. 
\end{remark}

\subsection{Proofs of the Lemmas}

\begin{proof}[Proof of Lemma \ref{lem:leave_one_out}]
We first claim that for any compact $K\subseteq \C$ and $1/2<p_0<1$ it holds
\begin{align}
	\mathbb E\Big[\int_K
	\left|
	m_{t,n}- m_{t,n}^{[k]}
	\right|^{p_0}\dd \mathcal L\Big]
	\leq
	C_{K,p_0,t}
	\left(
	n^{-p_0}+n^{1-2p_0}
	\right)\to 0
	\label{eq:lol_p0_estimate},
\end{align}
where $\mathcal L$ is the Lebesgue measure on $\C$.
To this end, use Lemma \ref{lem:commu} and  obtain  the identity
\begin{align}
m_{t,n}(z)- m_{t,n}^{[k]}(z) =
\frac{
	1-t\partial_z m_{t,n}^{[k]}(z)
}{
	n\left(
	z-z_k-t m_{t,n}^{[k]}(z)
	\right)
}.
\label{eq:lol_difference}
\end{align}	
Choose $p_0<1$, then by subadditivity of $x\mapsto x^{p_0}$ we shall control both summands of the numerator separately.
 For the $1$-term, observe that $z_k\sim\mu_0$ is independent of $m_{t,n}^{[k]}$ with bounded density, hence for any $p<2$
\begin{align}
	\mathbb E\Big[\int_K
	\left| 
		n\left(
		z-z_k-t m_{t,n}^{[k]}(z)
		\right) 
	\right|^{-p} \dd \mathcal L(z)\Big]
	\lesssim n^{-p} \sup_{a\in\C}
	\int_{\C}|a-w|^{-p} \mu_0(dw)
	\lesssim n^{-p},\label{eq:Lploc_Stieltjes}
\end{align}
where here and throughout, we use $\lesssim$ to denote inequality up to positive constants.
Similarly, freezing the roots $z_j, j\neq k$ first, we have	again by subadditivity for $p_0<1$,
\begin{align}
	\mathbb E\left[
	\left|
	\frac{-t\partial_z m_{t,n}^{[k]}(z)
	}{
		n\left(
		z-z_k-t m_{t,n}^{[k]}(z)
		\right)
	}
	\right|^{p_0}
	\Big|  z_j,j\neq k
	\right]
	\lesssim
	 n^{-p_0}
	\left| \partial_z m_{t,n}^{[k]}(z)
	\right|^{p_0}\le  n^{-2p_0}\sum_{\xi:P_{t,n}^{[k]}(\xi)=0} 
	|z-\xi|^{-2p_0}.\label{eq:someLpbound}
\end{align}
Integrating all summands over $z\in K$ and using \eqref{eq:mLp}, we obtain that \eqref{eq:someLpbound} is further bounded by
\begin{align}
	\mathbb E\left[ \int_K	   n^{-2p_0}\sum_{\xi:P_{t,n}^{[k]}(\xi)=0} 
	|z-\xi|^{-2p_0}\dd \mathcal L(z)\right]\lesssim n^{1-2p_0}.
\end{align}
Choosing $1/2<p_0<1$, we end up with \eqref{eq:lol_p0_estimate}. 
	The upgrade of the conclusion to $p=1$ follows from H\"older's inequality 
	on the product space $\mathcal L\times\P$ and \eqref{eq:mLp} giving
\begin{align}
	\E\int_K  |	m_{t,n}- m_{t,n}^{[k]}|\dd \mathcal L
	\le
	\left(\mathbb E\int_K |	m_{t,n}- m_{t,n}^{[k]}|^{p_0}\dd\mathcal L\right)^{\frac{q-1}{q-p_0}}
	\left(\mathbb E\int_K  |	m_{t,n}- m_{t,n}^{[k]}|^q\dd\mathcal L\right)^{\frac{1-p_0}{q-p_0}}
	\to 0\label{eq:Hodor}
\end{align}
	for $0<p_0<1<q<2$.  
\end{proof}

For the proof of Lemma \ref{lem:stieltjes_concentration}, we will first rely on an exponential concentration for the logarithmic potential, which is easier due to its higher integrability.

\begin{lemma}
	\label{lem:potential_concentration}
	Assume that $\mu_0$ has a bounded density and satisfies the $2+\delta$ logarithmic moment assumption \eqref{eq:log-ass}.	
	Then, for each fixed $t\geq0$, the sequence $\E\mu_{t,n}$ is tight and, almost surely,
	\begin{align}
		U_{t,n}-\mathbb E U_{t,n}
		\longrightarrow0
		\qquad
		\text{in }L^1_{\mathrm{loc}}(\mathbb C)
		\label{eq:potential_as_convergence}.
	\end{align}
 Moreover, the statement remains true if $t_n\to0$ is a vanishing sequence depending on $n$. If additionally, $\mu_0$ is compactly supported and  $K\subseteq\C$ is compact, then there exists $c>0$ such that, for any deterministic sequence $t_n\geq0$, 	almost surely for sufficiently large $n$,
	\begin{align}\label{eq:Uquanti}
	\|U_{t_n,n}-\E U_{t_n,n}\|_{L^1(K)}
	\leq c\sqrt{\frac{\log n}{n}}.
\end{align}
\end{lemma}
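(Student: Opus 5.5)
We plan to prove Lemma \ref{lem:potential_concentration} by a martingale difference argument (Doob martingale over the roots) together with the leave-one-out recursion \eqref{eq:recursion}. Fix compact $K$ and set $X_n:=\int_K|U_{t,n}-\E U_{t,n}|\,\dd\mathcal L$. It suffices to control $\int_K\varphi\,(U_{t,n}-\E U_{t,n})\dd\mathcal L$ uniformly over $\|\varphi\|_\infty\le1$. Equivalently, we treat the random element $U_{t,n}|_K\in L^1(K)$ and apply a Banach-space-valued (or scalar, after conditioning on the sign pattern) bounded-difference inequality.

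\emph{Step 1: bounded differences.} Let $P'_{t,n}$ denote the heat-evolved polynomial obtained when $z_k$ is replaced by an independent copy $z_k'$. By \eqref{eq:recursion}, both $P_{t,n}$ and $P'_{t,n}$ factor through the same $P^{[k]}_{t,n}$, so
\[
n\,(U_{t,n}-U'_{t,n})(z)=\log\bigl|z-z_k-tm^{[k]}_{t,n}(z)\bigr|-\log\bigl|z-z_k'-tm^{[k]}_{t,n}(z)\bigr|.
\]
Thus the $k$-th martingale increment is $n^{-1}$ times a difference of logarithms of the form $\log|a(z)-w|$ with $w\in\{z_k,z_k'\}$ independent of $a$.

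\emph{Step 2: moment bounds for the increments.} Conditionally on the other roots, we bound
\[
\E\Big(\int_K\big|\log|a(z)-z_k|\big|\,\dd\mathcal L(z)\Big)^{2+\delta}.
\]
Near the singularity, bounded density gives $\sup_a\int|\log|a-w||^q\mu_0(\dd w)<\infty$. For large $|z_k|$ we use $\log(1+|z_k|)+\log(1+|a(z)|)$, and $\int_K|tm^{[k]}_{t,n}|^{q}$ is bounded by \eqref{eq:mLp} only for $q<2$. This is the delicate point. We split $\log|a-w|\le \log(1+|w|)+\log(1+|a|)$ and note that $\log(1+|a|)$ has all moments, since $|m|^q$ is integrable for $q<2$ and $\log^{r}(1+x)\lesssim 1+x^{q}$. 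The increment $d_k$ of the Doob martingale therefore satisfies $\E|d_k|^{2+\delta}\lesssim n^{-2-\delta}$, with the constant coming from \eqref{eq:log-ass}.

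\emph{Step 3: concentration and Borel--Cantelli.} Burkholder's inequality with exponent $2+\delta$ gives $\E|X_n-\E X_n|^{2+\delta}\lesssim n^{-(2+\delta)/2}$. Together with $\E X_n\lesssim (\sum\E d_k^2)^{1/2}\lesssim n^{-1/2}$, Markov's inequality yields $\P(X_n>\varepsilon)\lesssim n^{-1-\delta/2}$, which is summable, so $X_n\to0$ almost surely. Running over a countable exhaustion by compacts gives \eqref{eq:potential_as_convergence}. Since the bounds are uniform in $t$ on bounded ranges, a sequence $t_n\to0$ is handled identically. In the compact-support case the increments are bounded, $|d_k|\le C/n$ in $L^1(K)$, because all logarithms are bounded in $L^1(K)$ uniformly. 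Pinelis' Hilbert/Banach martingale version of Azuma (applied in $L^2(K)$, which dominates $L^1(K)$) then gives $\P(X_n>c\sqrt{\log n/n})\le n^{-2}$, and Borel--Cantelli yields \eqref{eq:Uquanti}.

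\emph{Step 4: tightness.} We bound $\E\mu_{t,n}(B_R^c)$. The logarithmic potential bound \eqref{eq:ULp} combined with the recursion controls $\E\int_K|U_{t,n}|$ uniformly, and since $U_{t,n}\ge \log R-\text{(mass inside)}$ fails to give tightness directly, we instead use the averaged identity. By \eqref{eq:crucial_rep}, $\E U_{t,n}(z)$ grows like $\log|z|$ uniformly in $n$ at large $|z|$; alternatively, the first moment of roots is bounded via the sum of squared roots, which heat flow shifts by only $O(t)$. Uniform control of $\E U_{t,n}$ against $\log|z|$ on annuli then gives tightness by Jensen-type arguments.

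The main obstacle is Step 2, namely the integrability of the increments when $\mu_0$ has unbounded support and only $2+\delta$ logarithmic moments, so that $a(z)=z-tm^{[k]}_{t,n}(z)$ is random and unbounded. We would first try the splitting above, and otherwise truncate the roots at radius $n^{\gamma}$, absorbing the discarded roots using \eqref{eq:log-ass} and a union bound.
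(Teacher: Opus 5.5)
\textbf{The qualitative part works and differs from the paper.} Your qualitative argument (almost sure $L^1_{\mathrm{loc}}$ convergence, including $t_n\to0$) is sound. It is genuinely different from the paper's route.
\begin{itemize}
\item By \eqref{eq:somelogineq} the $\log(1+|a|)$ terms cancel in the resampling difference. After averaging over $z_k,z_k'$ (bounded density plus \eqref{eq:log-ass}) you get $\E\|U_{t,n}-U'_{t,n}\|_{L^1(K)}^{2+\delta}\lesssim n^{-2-\delta}$, uniformly in the other roots and in $t$.
\item Apply Burkholder to the \emph{scalar} Doob martingale of $X_n$, whose increments are bounded by $\E[\|U_{t,n}-U'_{t,n}\|_{L^1(K)}\mid\mathcal F_k]$. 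Do not run it in $L^1(K)$. This gives $\P(X_n>\varepsilon)\lesssim n^{-1-\delta/2}$ directly.
\end{itemize}
This avoids the paper's detour through compact support, exponential concentration and truncation at $R_n=\exp(n^{1/(2+\delta)})$. It is the Efron--Stein mechanism the paper itself uses in Proposition \ref{prop:diff_regularization}.

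\textbf{The gap: bounded increments fail in the quantitative bound.} For \eqref{eq:Uquanti} you claim that, for compactly supported $\mu_0$, $\|d_k\|_{L^1(K)}\le C/n$ almost surely. This is false, so Azuma/Pinelis for bounded increments cannot be used.
\begin{itemize}
\item Uniform local integrability of $x\mapsto\log|x-w|$ does not transfer to $z\mapsto\log|a(z)-w|$. The map $a(z)=z-tm^{[k]}_{t,n}(z)$ can be nearly constant on macroscopic sets, and $\log|a-z_k|=\log|P_{t,n}/P^{[k]}_{t,n}|$ is the logarithm of a degree-$n$ rational function.
\item Concretely, take $k=n$, where $d_n$ involves no conditional averaging. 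Let the zeros of $P^{[n]}_{t,n}$ be close to those of the truncated exponential $\sum_{j\le N}u^j/j!$, with $u=n(z-w)^2/(2t)$ and $2N=n-1$.
\item Inside the rescaled Szeg\H{o} region, a set of area $\asymp t$, one has $tm^{[n]}_{t,n}(z)=(z-w)(1+O(\rho^n))$ with $\rho<1$. If also $|z_n-w|\le e^{-cn}$, then $\|d_n\|_{L^1(K)}$ is of order one, not $O(1/n)$.
\item Such configurations have positive probability, e.g.\ for $\mu_0$ uniform on a large disc. This is because $D_{t/n}^{-1}=\exp(\frac{t}{2n}\partial_z^2)$ also moves zeros only by $O(\sqrt t)$.
\end{itemize}

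\textbf{What survives, and how the paper closes it.} You only have control after averaging over $z_k$. Pointwise in $z$, this is the conditional Bernstein condition $\E[|d_k(z)|^q\mid\mathcal F_{k-1}]\le q!(C_R/n)^q$ from \eqref{eq:uniform_log_moments}. The paper turns this into an exponential moment bound for $U_{t,n}(z)-\E U_{t,n}(z)$, uniform in $z$ and $t$. It then passes to the norm by Jensen on $K$:
$\E e^{\lambda\|U_{t,n}-\E U_{t,n}\|_{L^1(K)}}\le\mathcal L(K)^{-1}\int_K\E e^{\lambda\mathcal L(K)|U_{t,n}(z)-\E U_{t,n}(z)|}\,\dd\mathcal L(z)$,
followed by Chernoff with $\lambda\asymp\sqrt{n\log n}$. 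To keep your route, you would instead need a Bernstein-type (conditional-moment) martingale inequality in $L^2(K)$.

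\textbf{Tightness also needs repair.}
\begin{itemize}
\item The squared-roots idea fails. For complex roots, $\sum\xi_i^2$ does not control $\sum|\xi_i|^2$, and $\mu_0$ need not have second moments.
\item The statement that $\E U_{t,n}$ grows like $\log|z|$ is too weak. You need $R^{-2}\int_{R<|z|<2R}|\E U_{t,n}-\log|z||\,\dd\mathcal L\to0$ uniformly in $n$, and then to test against $\Delta\varphi(\cdot/R)$.
\item That estimate does follow from \eqref{eq:crucial_rep}. Use that $U_0(a)-\log(1+|a|)$ is bounded and vanishes at infinity, and that $\int_{B_{2R}}|m|\,\dd\mathcal L\lesssim R$ deterministically.
\end{itemize}
The paper instead uses Walsh's bound for the truncated roots together with \eqref{eq:tightnessEmu}.
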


\begin{proof}Let us first consider the case of compact support $\supp\mu_0\subseteq B_R$ for some $R>1$, and later remove this assumption by truncation.
Thus, $z_k\sim \mu_0$ are bounded by $R$ with bounded density and we first claim that
	\begin{align}
		\sup_{w\in\mathbb C}
		\mathbb E\left[
		\left|
		\log|w-z_k|-U_0(w)
		\right|^q\right]
		\leq q!C_R^q,
		\qquad C_R:=C(1+\log R), \quad q\in\N_{\ge 2}
		\label{eq:uniform_log_moments}
	\end{align}
for some $C$ depending only on the uniform density bound of $\mu_0$. 
	To check this, recenter by $\log(1+|w|)$ instead of $U_0(w)=\mathbb E\log|w-z_k|$ and use Jensen's inequality 
	\begin{align*}
		\sup_{w\in\C}\E\left|\log|w-z_k|-U_0(w)\right|^q
		\le 2^q\sup_{w\in\C}
		\E\left|\log|w-z_k|-\log(1+|w|)\right|^q.
		\end{align*}
Now, observe the elementary inequality 
\begin{align} 
	\left|\log|w-z|-\log(1+|w|)\right|	\le	\log(2+|z|)	+|\log(1\wedge|w-z|)|,\label{eq:somelogineq}
	\end{align}
		which follows from separating the cases $|w-z|\lessgtr 1$.  Applying this and the local bound $\P(|w-z_k|<e^{-x})\le Ce^{-2x}$ coming from bounded density, the claim \eqref{eq:uniform_log_moments} follows from
		\begin{align*} 
		\sup_{w\in\C}\E\left|\log|w-z_k|-U_0(w)\right|^q	\le 4^q\left(
		\log^q(2+R)+Cq\int_0^\infty x^{q-1}e^{-2x}\dd x
		\right) =4^q\left(\log^q(2+R)+\frac{Cq!}{2^q}\right).
	\end{align*}
 
Now, we aim for a martingale concentration inequality. Define the filtration $\mathcal F_k:=\sigma(z_1,\ldots,z_k)$, and, the Doob martingale $M_k:=\mathbb E\left[U_{t,n}(z)\mid \mathcal F_k\right]$. It is well defined for almost all $z\in\C$ by Fubini and \eqref{eq:ULp}. By the recursion \eqref{eq:recursion}, its increments are given by
\begin{align}
M_k-M_{k-1}&=  \E\left[\frac 1 n
\log\left|z-t m_{t,n}^{[k]}(z)-z_k\right|\Big|\mathcal F_ {k}\right]-\E\left[\frac 1n
\log\left|z-t m_{t,n}^{[k]}(z)-z_k\right|\Big|\mathcal F_ {k-1}\right]\nonumber \\
&=\frac 1 n\E\left[
\log\left|z-t m_{t,n}^{[k]}(z)-z_k\right|-U_0\left(z-t m_{t,n}^{[k]}(z)\right)\Big|\mathcal F_ {k}\right]\label{eq:Doob_increment}
\end{align}
An application of the conditional Jensen's inequality gives for every $q\geq2$,
	\begin{align*}
		\mathbb E\left[
		|M_k-M_{k-1}|^q\mid\mathcal F_{k-1}
		\right]
		\le \frac{1}{n^q}  \E\left[  \left| 
		\log\left|z-t m_{t,n}^{[k]}(z)-z_k\right|-U_0\left(z-t m_{t,n}^{[k]}(z)\right) \right|^q\mid\mathcal F_{k-1}\right]
		\le		
		q!\left(\frac{C_R}{n}\right)^q, 
	\end{align*}
	where the last step follows from independence and \eqref{eq:uniform_log_moments}.
This is a conditional Bernstein moment condition for the increments of the martingale $M_k$, and we now concentrate on a concentration inequality. For any $0<\lambda < n/C_R$
\begin{align}
\E\left[e^{\lambda(M_k-M_{k-1})}\mid\mathcal F_{k-1} \right]\le 1+\sum_{q=2}^\infty \left( \frac{\lambda C_R}n\right)^q \le \exp \left( \frac{\left( \frac{\lambda C_R}n\right)^2}{1- \frac{\lambda C_R}n}\right)= \exp \left( \frac{  \lambda^2 C_R^2 }{n^2-  n\lambda C_R}\right).
\end{align}
Iterating $n$ times and using the tower property gives an exponential moment
\begin{align}
	\E\left[e^{\lambda (M_n -M_0)} \right]\le \exp \left( \frac{  \lambda^2 C_R^2 }{n-  \lambda C_R}\right).
\end{align}
Thus, the same holds for its negative and Chernoff's bound for $\lambda=\frac{nu}{2C_R^2+C_Ru}$ shows
	$$
	\mathbb P\left(
	\left|
		U_{t,n}(z)-\mathbb E U_{t,n}(z)
	\right|>u
	\right)
	\leq 
	2\exp\left(-\lambda u  +\frac{  \lambda^2 C_R^2 }{n-  \lambda C_R}	\right)=2\exp\left(- n\frac{u^2}{2(2C_R^2+C_Ru)}	\right).
	$$
	Thus, there exists a constant  $c>0$, depending only on $\mu_0$, such
	that, for almost all $z\in\mathbb C$, every $u>0$,
	every $n\geq1$, and every $t\geq0$,
	\begin{align}
		\mathbb P\left(
		\left|
		U_{t,n}(z)-\mathbb E U_{t,n}(z)
		\right|>u
		\right)
		\leq
		2\exp\left(
		-cn\min\{u^2,u\}
		\right).
		\label{eq:potential_concentration}
	\end{align}
	Hence, for every fixed $z$ by Borel--Cantelli,
	$$
	U_{t,n}(z)-\mathbb E U_{t,n}(z)\longrightarrow0
	\qquad\text{almost surely.}
	$$
Fubini's theorem implies that, almost surely, this convergence holds for Lebesgue-almost every $z$. 
 It is well known since \cite{walsh} that the roots of $P_{t,n}$ lie within distance $(2+\varepsilon)\sqrt t$ of $\supp \mu_0$, see \cite[Theorem 5.3.1]{rahman_schmeisser} and also \cite{MSS15}, \cite[Lemma 3.12]{HJK}. Hence, $\mu_{t,n}-\E\mu_{t,n}$ has uniformly bounded support and is tight. Together with uniform local $L^p$ integrability  \eqref{eq:ULp}, it follows \eqref{eq:potential_as_convergence}. 
 
Let us now show the quantitative bound \eqref{eq:Uquanti}. This turns out to be a joint work of Johan Jensen and Guido Fubini,  who show
 
 \begin{align*}
 	\E\exp\left(
 	\lambda\|U_{t_n,n}-\E U_{t_n,n}\|_{L^1(K)}
 	\right)
 	&\leq \frac1{\mathcal L(K)}
 	\int_K \E\exp\big(
 	\lambda\mathcal L(K)
 	|U_{t_n,n}(z)-\E U_{t_n,n}(z)|
 	\big)\dd\mathcal L(z)\\
 	&\leq 2\exp\left(
 	\frac{2C_R^2\lambda^2\mathcal L(K)^2}{n}
 	\right)
 \end{align*}
 for $0<\lambda<n/(2C_R\mathcal L(K))$, with $C_R$ independent of $t_n$.
 Together with Herman Chernoff they choose
$ \lambda=\frac{\sqrt{n\log n}}{C_R\mathcal L(K)}$
 and get
\begin{align}
 \P\left(
 \|U_{t_n,n}-\E U_{t_n,n}\|_{L^1(K)}
 >4C_R\mathcal L(K)\sqrt{\frac{\log n}{n}}
 \right)\leq 2n^{-2}\label{eq:2or3}
\end{align}
for sufficiently large $n$.  Finally, \'Emile Borel and Francesco Cantelli join in to conclude \eqref{eq:Uquanti} with $c=4C_R\mathcal L(K)$.

It remains to remove the compact-support assumption from the qualitative claims. We resample roots that are too large and control the effect through our recursive formulas.  Independently for each $j$, retain $z_j$ if $|z_j|<R$ and otherwise replace it by an independent sample  $z_j^{(R)}$ from the truncation 
\begin{align}
\mu_0^{(R)}:=\frac{\mu_0|_{B_R}}{\mu_0(B_R)}\label{eq:truncation}
\end{align}
for $R>1$ sufficiently large. The resulting roots $z_j^{(R)}$ are i.i.d.\ with law $\mu_0^{(R)}$ being compactly supported and uniformly bounded due to $\mu_0^{(R)}\le 2\mu_0$.

Fix $t\ge0$ and a compact $K\subseteq\C$. Replace the roots one at a time, and denote by $U_{t,n}^{(R,k)}$ the potential after replacing the first $k$ roots. By the recursion \eqref{eq:recursion},
\begin{align}
U_{t,n}^{(R,k-1)}(z)-U_{t,n}^{(R,k)}(z)
=\frac1n\left(\log|a-z_k|-\log|a-z_k^{(R)}|\right),\label{eq:onesteptruncation}
\end{align}
	where $a=z-t\tilde m_{t,n}^{[k]}(z)$ is independent of the replaced pair $(z_k,z_k^{(R)})$ and $\tilde m_{t,n}^{[k]}(z)$ belongs to $k-1$ many $z_j$'s already been truncated. 
Conditioning on $(z_j,z_j^{(R)}),j\neq k$, summing over the replacements and Cauchy-Schwarz inequality gives  
	\begin{align}
	\E\|U_{t,n}-U_{t,n}^{(R)}\|_{L^1(K)}
	&\leq \mathcal L(K)\sup_{a\in\C}
	\E\left[
	\mathbf 1_{B_R^c}(z_1)
	\left|\log|a-z_1|-\log|a-z_1^{(R)}|\right|
	\right]\label{eq:logbound} \\
	&\lesssim \mathcal L(K)\sqrt{\mu_0(B_R^c) }
	\sup_{a\in\C}\left(
	\E\left|\log|a-z_1|-\log(1+|a|)\right|^{2 }
	+\E\left|\log|a-z_1^{(R)}|-\log(1+|a|)\right|^{2 }
	\right)^{\frac1{2 }}\nonumber\\
	&\lesssim \mathcal L(K)\sqrt{\mu_0(B_R^c) }
	\to0,\label{eq:Rbound}
\end{align}
where we again used \eqref{eq:somelogineq} in the last step as well as our finite logarithmic moment assumption \eqref{eq:log-ass}. In particular the constant in \eqref{eq:Rbound} is independent of $n,t,R,K$ and $\E\|U_{t,n}\|_{L^1(K)}<\infty$. 

Walsh’s bound \cite[Theorem 5.3.1]{rahman_schmeisser} again gives $\supp \mu_{t,n}^{(R)}\subseteq B_{R+3\sqrt t}\subset B_{2R}$, hence the claimed tightness follows from testing against  $\varphi_R(z)=\varphi(z/2R)$ for a smooth cutoff $\varphi$ between $B_1$ and $B_2$
\begin{align}
	\sup_n\E\mu_{t,n}(B_{4R}^c) \leq \frac{\|\Delta\varphi_R\|_\infty}{2\pi}
	\sup_n\E\|U_{t,n}^{(R)}-U_{t,n}\|_{L^1(B_{4R})} \lesssim R^{-2}\mathcal L(B_{4R})
	\sqrt{\mu_0(B_R^c)}\to0.\label{eq:tightnessEmu}
\end{align}

Now choose $R_n:=\exp (n^{1/(2+\delta)} )$ such that $C_{R_n}\sim C n^{1/(2+\delta)}$.
The density bounds of $\mu_0^{(R_n)}$ are uniform in $n$, so \eqref{eq:2or3} with $R=R_n$  reads
$$
\P\left(
\|U_{t,n}^{(R_n)}-\E U_{t,n}^{(R_n)}\|_{L^1(K)}
>4C_{R_n}\mathcal L(K)\sqrt{\frac{\log n}{n}}
\right)\le2n^{-2},
$$
hence, using $C_{R_n}\sqrt{ {\log n}/{n}}\sim Cn^{-\delta/(4+2\delta)}\sqrt{\log n}\to 0$ for $\delta>0$ and Borel--Cantelli, we get almost surely 
\begin{align}\label{eq:UtnRn}
\|U_{t,n}^{(R_n)}-\E U_{t,n}^{(R_n)}\|_{L^1(K)}
\to 0 .
\end{align}
On the other hand,  none of the $n$ roots are actually truncated and we have $U_{t,n}^{(R_n)}=U_{t,n}$ almost surely for sufficiently large $n$. Indeed, the log-moment assumption \eqref{eq:log-ass} implies
$$
\sum_{j=1}^{\infty}
\P\left(\log^{2+\delta}(1+|z_j|)>j\right)\le \E \big( \log^{2+\delta}(1+|z_1|)\big)<\infty,
$$
hence almost surely, $|z_j|<R_j$ for all sufficiently
large $j$ and finitely many remaining roots also lie in $B_{R_n}$
for sufficiently large $n$. For such $n$, it follows from Jensen's inequality, \eqref{eq:Rbound} and \eqref{eq:UtnRn} that
$$
\begin{aligned}
	\|U_{t,n}-\E U_{t,n}\|_{L^1(K)}
	&\le
	\|U_{t,n}^{(R_n)}-\E U_{t,n}^{(R_n)}\|_{L^1(K)}
	+\|\E U_{t,n}^{(R_n)}-\E U_{t,n}\|_{L^1(K)}\\
	&\lesssim
	\|U_{t,n}^{(R_n)}-\E U_{t,n}^{(R_n)}\|_{L^1(K)}
	+ \mathcal L(K)\sqrt{\mu_0(B_{R_n}^c)}
	\to 0.
\end{aligned}
$$
Taking again a countable sequence $\overline B_r$ by compact disks proves the claimed almost sure $L_{\mathrm{loc}}^1$ convergence in \eqref{eq:potential_as_convergence}. Note that these arguments are independent of $t$, so for any deterministic sequence $t_n\to0$, it follows also $U_{t_n,n}-\E U_{t_n,n}\to 0 $ almost surely in $L^1_{\mathrm{loc}}(\C)$. 
\end{proof}

It should be remarked that the qualitative concentration claim may also follow from some Efron--Stein inequality, whose setting is well tailored to our resampling strategy.

\begin{remark}[Extensions, continued]\label{rem:gen2}
		Of course, by exchangeability, the supremum of Lemma \ref{lem:leave_one_out} can be replaced by $k=1$. But for non-identically distributed $z_k\sim \mu^{(k)}$ as in Remark \ref{rem:gen}, this plays a role. Similarly, \eqref{eq:Doob_increment} would need to replace $U_0$ by $U_0^{(k)}$ of $\mu^{(k)}$, but the bound stays the same under the assumptions of  Remark \ref{rem:gen}.
Similarly, the proof of Proposition \ref{prop:m_t_limits} carries over after replacing \eqref{eq:averagedm} by
$$ \mathbb E m_{t,n}(z) =\frac1n\sum_{k=1}^n \mathbb E\,m_0^{(k)} \bigl(z-tm_{t,n}^{[k]}(z)\bigr), $$
and using $\frac1n\sum_{k=1}^n m_0^{(k)} \to m_0$.

For a weaker $1+\delta$ logarithmic moment assumption, one may still obtain convergences in probability: Running through the above arguments again, only replacing \eqref{eq:Rbound} by H\"older's inequality and choosing $R_n=\exp(n^{1/4})$. For a different proof in probability under solely the first logarithmic moment condition, we refer to \cite{sean}.
\end{remark}

\begin{proof}[Proof of Lemma \ref{lem:stieltjes_concentration}]
	By Lemma~\ref{lem:potential_concentration} we have $U_{t,n}-\mathbb E U_{t,n}
	\to0$ almost surely in $L^1_{\mathrm{loc}}(\C)$ which implies vague convergence of $\mu_{t,n}-\E\mu_{t,n}\to0$ by the distributional Poisson equation $\Delta U_\mu=2\pi \mu$. 
Analogously, almost sure vague convergence implies $m_{t,n}-\E m_{t,n}\to 0$ in $L^1_{\mathrm{loc}}$, since $\mu_{t,n}-\E\mu_{t,n}$ has bounded total variation (by $2$).  The convergence in expectation follows from dominated convergence and the uniform bound \eqref{eq:mLp}.
\end{proof}

\subsection{The (local) push-forward.}

This is all we need to prove a pushforward theorem by standard arguments.

\begin{prop}[Local push-forward Theorem]\label{prop:localpushforward}
Let $t\ge 0$ and $\mathcal D_t\subseteq \C$ be a domain of regular points $z\in\mathcal D_t$ such that $T_t:T_t^{-1}(\mathcal D_t)\to \mathcal D_t$ is a $\mathcal C^1$-diffeomorphism, in particular it is a bijection. Then, the densities of the common restrictions of all subsequential limits $\mu_t|_{\mathcal D_t}$ and the pushforward $(T_t)_\#\mu_0$ coincide locally on $\mathcal D_t$.
\end{prop}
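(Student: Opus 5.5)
The plan is to combine the self-consistent equation from Proposition \ref{prop:m_t_limits} with the distributional identity $\partial_{\bar z}m=\pi\mu$, and then compute the density of $\mu_t$ by the chain rule through $T_t^{-1}$. By Proposition \ref{prop:m_t_limits}, every subsequential limit $\mu_t$ of $\E\mu_{t,n}$ (and almost surely of $\mu_{t,n}$) has Stieltjes transform
\[
m_t=m_0\circ T_t^{-1}\quad \text{on }\mathcal D_t,
\]
in $L^1_{\mathrm{loc}}(\mathcal D_t)$. Write $S:=T_t^{-1}:\mathcal D_t\to T_t^{-1}(\mathcal D_t)$, which is $\mathcal C^1$ by assumption. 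Then $\pi\mu_t|_{\mathcal D_t}=\partial_{\bar z}(m_0\circ S)$ distributionally. Since $T_t$ is a $\mathcal C^1$-diffeomorphism, $m_0=(T_t-\mathrm{id})/t$ is $\mathcal C^1$ on $T_t^{-1}(\mathcal D_t)$ for $t>0$. The case $t=0$ is trivial, since then $T_0=\mathrm{id}$. Consequently $\mu_0=\frac1\pi\partial_{\bar w}m_0$ is a continuous density $\rho_0$ there, and $m_0\circ S$ is $\mathcal C^1$, so the chain rule applies classically.

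Next, compute with Wirtinger derivatives:
\[
\partial_{\bar z}(m_0\circ S)=(\partial_w m_0)(S)\,\partial_{\bar z}S+(\partial_{\bar w}m_0)(S)\,\overline{\partial_z S}.
\]
I will differentiate the identity $z=S+tm_0(S)$. Writing $a=\partial_w m_0(S)$ and $b=\partial_{\bar w}m_0(S)=\pi\rho_0(S)$, the $\bar z$-derivative and the conjugate of the $z$-derivative give the linear system
\[
0=(1+ta)\,\partial_{\bar z}S+tb\,\overline{\partial_z S},\qquad 1=\overline{(1+ta)}\,\overline{\partial_zS}+\bar t\bar b\,\partial_{\bar z}S,
\]
where $t$ is real. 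Solving it yields
\[
\partial_{\bar z}S=-\frac{tb}{J}\quad\text{and}\quad \overline{\partial_zS}=\frac{1+ta}{J},
\]
with $J=|1+ta|^2-t^2|b|^2$. This $J$ is exactly the real Jacobian of $T_t$ at $S(z)$, since $\partial_wT_t=1+ta$ and $\partial_{\bar w}T_t=tb$, and $J\neq0$ by the diffeomorphism assumption. Substituting back gives
\[
\partial_{\bar z}(m_0\circ S)=\frac{-tab+b(1+ta)}{J}=\frac{b}{J}.
\]
Hence $\rho_t(z)=\rho_0(S(z))/J_{T_t}(S(z))$, which is the change-of-variables formula for the density of $(T_t)_\#\mu_0$ restricted to $\mathcal D_t$. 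This uses that $T_t$ is injective onto $\mathcal D_t$, so that $(T_t)_\#\mu_0$ on $\mathcal D_t$ only sees mass from $T_t^{-1}(\mathcal D_t)$. The Jacobian is positive because $T_t$ is orientation preserving for small $t$ and, by connectedness of the domain, everywhere on it. If the sign were negative, one would instead take the absolute value $|J|$, which the pushforward formula uses anyway.

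The main obstacle is the regularity bookkeeping. I need the distributional $\partial_{\bar z}$ of $m_0\circ S$ to coincide with the pointwise chain-rule computation. The $L^1_{\mathrm{loc}}$ identity from Proposition \ref{prop:m_t_limits} determines the distribution, and $\mathcal C^1$-regularity of $m_0\circ S$ on $\mathcal D_t$ (inherited from $T_t$ being a $\mathcal C^1$ diffeomorphism) lets me integrate by parts against test functions $\varphi\in\mathcal C_c^\infty(\mathcal D_t)$. Alternatively, I can avoid deriving $\mathcal C^1$ of $m_0$ and argue directly by testing: change variables $z=T_t(w)$ in $\int \partial_{\bar z}\varphi\,(m_0\circ S)\,\dd\mathcal L$, obtaining
\[
\int (\partial_{\bar z}\varphi)(T_tw)\,m_0(w)\,J(w)\,\dd\mathcal L(w),
\]
and then recognize that this equals $-\pi\int\varphi(T_tw)\,\mu_0(\dd w)$ via the identity $\partial_{\bar w}(\varphi\circ T_t)\,J=\ldots$. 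I would try the classical chain-rule route first and fall back to this weak formulation if needed.
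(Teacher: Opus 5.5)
Your argument is the paper's. You compute $\partial_{\bar z}(m_0\circ T_t^{-1})$ by the Wirtinger chain rule and invert the Jacobian. You do the inversion by implicitly differentiating $z=S+tm_0(S)$, where the paper writes out $JT_t(w)^{-1}$ explicitly. Both routes arrive at $\partial_{\bar w}m_0/\det JT_t$ and finish by change of variables. Your remark that $m_t=m_0\circ S$ is $\mathcal C^1$ on $\mathcal D_t$, so the distributional and classical $\partial_{\bar z}$ agree there, is useful care that the paper leaves implicit.

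The one step that does not work as written is the sign of $J$.
\begin{itemize}
\item Connectedness of $T_t^{-1}(\mathcal D_t)$ only tells you that $J$ has constant sign there. The appeal to ``orientation preserving for small $t$'' is not available. Here $t$ is fixed, and the diffeomorphism hypothesis (and $\mathcal D_t$ itself) concerns only this $t$, so there is no continuity-in-$t$ path back to $T_0=\mathrm{id}$.
\item Your fallback is also not valid. You derived $\rho_t=\rho_0(S)/J$, while the push-forward density on $\mathcal D_t$ is $\rho_0(S)/|J|$. These agree only where $J>0$ or $\rho_0(S)=0$. Switching to $|J|$ therefore silently changes the identity rather than proving it.
\end{itemize}

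The missing input is easy: the limit is a positive measure. Since $\partial_{\bar z}m_t=\pi\rho_t\ge0$, you get $\rho_0(S)/J\ge0$. This forces $J>0$ wherever $\rho_0(S(z))>0$, and at points where $\rho_0(S(z))=0$ both densities vanish. This is how the paper concludes ``in particular, the determinant is positive.'' With it, your formula $\rho_t=\rho_0\circ S/|J\circ S|$ holds on all of $\mathcal D_t$, and the rest of your proof goes through.
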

\begin{proof}The statement is trivial for $t=0$, so assume $t>0$.
We follow the approach of \cite{heatflowrandompoly} and rephrase the Jacobian $JT_t^{-1}$ of $T_t^{-1}$ as the inverse Jacobian of $T_t$. Since $T_t$ is locally differentiable, so is $m_0$. By the inverse function theorem
\begin{align*}
	\left(
	\begin{array}
		[c]{cc}%
		\partial_z T_t^{-1}(z) & \partial_{\bar{z}} T_t^{-1}(z)\\
		\partial_z\overline{T_t^{-1}(z)} & \partial_{\bar z}\overline{T_t^{-1}(z)}%
	\end{array}
	\right)
	=JT_t^{-1}(z)
	=JT_t(w)^{-1}
	&=\left(
	\begin{array}
		[c]{cc}%
		\partial_w T_t(w) & \partial_{\bar{w}} T_t(w)\\
		\partial_w\overline{T_t(w)} & \partial_{\bar w}\overline{T_t(w)}%
	\end{array}
	\right)^{-1}\\
	&=\frac{1}{\det JT_t(w)}\left(
	\begin{array}
		[c]{cc}%
		\partial_{\bar w} \overline{T_t(w)} &- \partial_{\bar{w}} T_t(w)\\ -
		\partial_w\overline{T_t(w)} & \partial_{ w}T_t(w)%
	\end{array}
	\right)
\end{align*}
The density of $\mu_t|_{\mathcal D_t}$ exists and is given by $\pi p_t(z)=\partial_{\bar z}m_t(z)\ge 0$. By the chain rule we have
\begin{align}
	\partial_{\bar{z}}m_{t}(z)
	&=\partial_{\bar{z}}\big(m_{0}(T_t^{-1}(z))\big)=(\partial_z m_0)(T_t^{-1}(z))\cdot\partial_{\bar z}T_t^{-1}(z) + (\partial_{\bar z}m_0)(T_t^{-1}(z))\cdot\partial_{\bar z}\overline{T_t^{-1}}(z)
	\\
	&=\frac{1}{\det JT_t(w)}\Big(-(\partial_w m_0)(w)\cdot\partial_{\bar w}T_t(w) + (\partial_{\bar w}m_0)(w)\cdot\partial_{w}T_t(w)\Big).\label{eq:Jacobiancalc}
\end{align}
But since $T_{t}w=w+tm_{0}(w)$, we have $\partial_{\bar w}T_t(w)=t\partial_{\bar w} m_0(w)$ and $\partial_w T_t(w)=1+t \partial_w m_0(w)$
giving
\begin{align}
	\partial_{\bar{z}}m_{t}(z)
	=\frac{\partial_{\bar w } m_0(w)}{\det JT_t(w)}\ge 0.\label{eq:densgross}
\end{align}
In particular, the determinant is positive. After dividing this result by $\pi$, the claim follows by the change of
variables theorem: the density of $(T_{t})_{\#}\mu_{0}$ at $z=T_t(w)$ is just
$1/{|\det JT_t(w)|}$ times the density of $\mu_{0}$ at $w$. \footnote{It is possible to extend the above proof to weak derivatives of $m$, weakening the assumption of $T_t$ having differentiable inverse.}
\end{proof}

\begin{proof}[Proof of Theorem \ref{thm:main}]
It follows from Proposition \ref{prop:m_t_limits} that 	$m_{t,n}\to m_t$ almost surely in $L^1_{\mathrm{loc}}(\C)$, hence we have vague convergence $\mu_{t,n}\to\mu_t=\frac 1 \pi \partial_{\bar z}m_t$ almost surely. 
If $T_t$ would already be a diffeomorphism, then the push forward follows from Proposition \ref{prop:localpushforward} for $T_t^{-1}(\mathcal D_t)$ having full mass under $\mu_0$. 
But $T_t$ is already globally bi-Lipschitz, with Lipschitz constant $(1-tL)^{-1}$ of $T_t^{-1}$, since $m_0$ is Lipschitz. The self-consistent equation \eqref{eq:selfconseq} holding almost everywhere lifts to a Lipschitz representative $ m_t(z)=m_0(T_t^{-1}(z))$, thus $\mu_t$ has bounded density $\frac 1 \pi \partial_{\bar z}m_t\le \frac{L}{\pi(1-t L)}$ almost everywhere. 
By Rademacher’s theorem, $T_t$ is almost everywhere differentiable such that the calculation in Proposition \ref{prop:localpushforward} remains valid almost everywhere and gives \eqref{eq:densgross} for almost every $w$, i.e. the densities satisfy $p_t(T_t(w))\det JT_t(w)=p_0(w)$ almost everywhere giving $\mu_t=(T_t)_\#\mu_0$. Since $\mu_t$ is a probability measure, the almost sure vague convergence is weak convergence. \end{proof}

\begin{proof}[Proof of Corollary \ref{cor:circ}]
The statement for $t<1$ follows from 
\begin{align}\label{eq:mSC}
m_0(w)=
\begin{cases}
	\overline w,& |w|\le1,\\
	1/w,& |w|>1,
\end{cases}
\end{align}
which gives $T_t (w)=(1+t)\Re(w)+i(1-t)\Im(w) $ for $|w|\le 1$. This makes $m_0$ globally $1$-Lipschitz and $T_t$ a $\mathcal C^1$ diffeomorphism between $B_1 $ and the given ellipse $\mathcal D_t=T_t(B_1)$.
	 
Let us now consider $t=1$. By the compactness and concentration arguments of
	Proposition~\ref{prop:m_t_limits}, it suffices to identify every
	subsequential limiting Stieltjes transform $m_1$, which satisfies
	$$
	m_1(z)=m_0(z-m_1(z))
	\qquad\text{for almost every }z\in\C.
	$$
	These arguments remain valid at $t=1$; the strict inequality $tL<1$ was used only for uniqueness, for which standard arguments from the semicircle law apply, such as in \cite[\S4,\S5]{HJK}.
	 
	Fix $z\notin\R$ where the self-consistent equation holds and put
	$w=z-m_1(z)$ for some subsequential limit $m_1(z)$. If $|w|\le1$, then
	$z=w+\overline w\in\R$, a contradiction. Hence \eqref{eq:mSC} implies $m_1(z)=1/(z-m_1(z))$ and $|m_1(z)|<1$. However, the roots of $m^2-zm+1=0$ have product $1$, and neither has
	modulus $1$ when $z\notin\R$. Thus, $|m_1(z)|<1$ uniquely selects the branch
	$$
	m_1(z)=\frac{z-\sqrt{z^2-4}}2,
	$$
	where $\sqrt{z^2-4}\sim z$ at infinity. This uniquely identifies the Stieltjes transform of the semicircle law and the claimed almost sure convergence follows.
\end{proof} 

\begin{proof}[Proof of Theorem \ref{thm:vanish} for $\mu_{t,n}$] 
The qualitative statement $\mu_{t_n,n}\Rightarrow\mu_0$ for $\mu_0$ having bounded density follows from Proposition \ref{prop:m_t_limits}. We will see how to remove the density assumption in Proposition \ref{prop:diff_regularization} below. Let us now prove the quantitative claim \eqref{eq:quantitativebound_mu}.

We first bound $\|\E U_{t_n,n}-U_0\|_{L^1(K)}$ for any compact
$K\subseteq\C$. Conditioning \eqref{eq:crucial_rep} on
$z_1,\ldots,z_{k-1}$ gives
$$
\E U_{t_n,n}(z)-U_0(z)
=\frac1n\sum_{k=1}^n
\E\left[
U_0\left(z-\frac{t_n}{n}
\frac{\partial_z(D_{t_n/n}P_{k-1})(z)}
{(D_{t_n/n}P_{k-1})(z)}\right)-U_0(z)
\right].
$$
Bounded density and compact support imply that $U_0$ is globally Lipschitz with constant $\|m_0\|_\infty<\infty$.
Consequently, Fubini's theorem and \eqref{eq:mLp} give
\begin{align*}
	\|\E U_{t_n,n}-U_0\|_{L^1(K)}
 \leq \frac{t_n\|m_0\|_\infty}{n^2}
	\sum_{k=1}^n \E\int_K
	\left|
	\frac{\partial_z(D_{t_n/n}P_{k-1})(z)}
	{(D_{t_n/n}P_{k-1})(z)}
	\right|\dd\mathcal L(z) \lesssim \frac{t_n}{n^2}\sum_{k=1}^n(k-1)
	\lesssim t_n,
\end{align*}
where $k-1$ comes from the degree of $D_{t_n/n}P_{k-1}$. Together with \eqref{eq:Uquanti}, this gives a deterministic $C_K>0$, almost surely
for sufficiently large $n$,
\begin{align}\label{eq:quant_U_bound}
\|U_{t_n,n}-U_0\|_{L^1(K)}
\le C_K\bigg(t_n+\sqrt{\frac{\log n}{n}}\bigg).
\end{align}
Fix some $\varphi\in\mathcal C_c^2(\C)$ and choose
$K\supseteq\operatorname{supp}\varphi$, then the distributional Poisson equation gives
$$ 
	\left|\int\varphi\dd\mu_{t_n,n}
	-\int\varphi\dd\mu_0\right|
	=\frac1{2\pi}
	\left|\int_K\Delta\varphi\,(U_{t_n,n}-U_0)\dd\mathcal L\right|\leq \frac{C_K}{2\pi}\|\Delta\varphi\|_\infty
	\left(t_n+\sqrt{\frac{\log n}{n}}\right).
$$
This proves the first claim \eqref{eq:quantitativebound_mu}, while the second bound \eqref{eq:quantitativebound} will be proven in Section \ref{sec:weakling}.
\end{proof}

\begin{remark}\label{rem:rates}
For a bound that is uniform in a class of $\varphi$, one may use smoothing at the scale of $\sqrt{t_n}+(\log n/n)^{1/4}$ giving non-optimal bounds of that order for the bounded Lipschitz metric $\dd_{BL}(\mu_{t_n,n},\mu_0)$. Similarly, the Kolmogorov metric on $\C$ can be bounded using \cite[Theorem 5]{GJ} and \eqref{eq:quant_U_bound}, yielding $\dd_{K}(\mu_{t_n,n},\mu_0)\lesssim \big(t_n+\sqrt{\frac{\log n}{n}}\big)^{1/3}$ eventually, $\P$-a.s..
\end{remark}

\section{Repeated differentiation}\label{sec:diff}
We may now follow the same route for repeated differentiation.
Throughout this section, we fix $0<t<1$ and denote $Q_{t,n}(z):=\partial_z^{\lfloor tn\rfloor}P_n(z)$,  of degree $d_n:=n-\lfloor tn\rfloor$ and leading coefficient $n!/d_n!$. Recall its empirical distribution of roots is $\nu_{t,n}:=\frac{1}{d_n}\sum_{z:Q_{t,n}(z)=0}\delta_z$. For the logarithmic potential and Stieltjes transforms, we use the probability normalization
\begin{align}
	U_{t,n}(z):=\frac{1}{d_n}\log\left|\frac{d_n!}{n!}Q_{t,n}(z)\right|,
	\qquad
	m_{t,n}(z):=\frac{1}{d_n}\frac{Q'_{t,n}(z)}{Q_{t,n}(z)}.
	\label{eq:diff_U_m}
\end{align}
Here, we use the same notation as in Section \ref{sec:heat flow}, again satisfying $2\partial_zU_{t,n}=m_{t,n}$ and
$\partial_{\bar z}m_{t,n}=\pi\nu_{t,n}$ distributionally.

\subsection{Recursion formulas}
For $n\in\N$ sufficiently large, define the polynomials where the $k$-th factor is left out as
\begin{align}
	P_n^{[k]}(z):=\prod_{\substack{j\leq n\\j\neq k}}(z-z_j),
	\qquad
	Q_{t,n}^{[k]}:=\partial_z^{\lfloor tn\rfloor-1}P_n^{[k]},
 \qquad
	m_{t,n}^{[k]}(z):=\frac{1}{d_n}
	\frac{Q_{t,n}^{[k]\prime}(z)}{Q_{t,n}^{[k]}(z)}.
	\label{eq:diff_deleted}
\end{align}
Comparing to Lemma \ref{lem:commu}, here, deleting one factor is accompanied by taking one fewer derivative.
Consequently, $Q_{t,n}^{[k]}$ and $Q_{t,n}$ both have degree $d_n=n-\lfloor tn\rfloor$, and
$m_{t,n}^{[k]}$ has the probability normalization. This choice also makes
$m_{t,n}^{[k]}$ independent of $z_k$. 

\begin{prop}\label{prop:diff_crucial_rep}
	For 
	almost every $z$, we have
	\begin{align}
		m_{t,n}(z)
		&=\frac{1}{d_n}\sum_{k=1}^n
		\frac{1}{z-z_k+\dfrac{\lfloor tn\rfloor}{ {d_n}m_{t,n}^{[k]}(z)}},
		\label{eq:diff_crucial_rep_m}
		\\
				1&=\frac{1}{n}\sum_{k=1}^n
		\frac{1}{\frac{\lfloor tn\rfloor}{n}+\frac{d_n}{n}(z-z_k)m_{t,n}^{[k]}(z)}
		\label{eq:diff_crucial_rep_one}.
	\end{align} 
\end{prop}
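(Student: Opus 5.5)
The plan is to run the analogue of Lemma \ref{lem:commu} for differentiation, using the Leibniz rule in place of the commutation identity for $D_{t/n}$. Throughout, write $\ell:=\lfloor tn\rfloor$ and take $n$ large enough that $\ell\ge1$, so that $Q_{t,n}^{[k]}=\partial_z^{\ell-1}P_n^{[k]}$ is well defined.

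\emph{Step 1: a one-factor recursion.} Since $\partial_z^2(z-z_k)=0$, the Leibniz rule gives $\partial_z^{\ell}\big((z-z_k)f\big)=(z-z_k)\partial_z^{\ell}f+\ell\,\partial_z^{\ell-1}f$ for every polynomial $f$. Apply this with $f=P_n^{[k]}$ and use $\partial_z^{\ell}P_n^{[k]}=Q_{t,n}^{[k]\prime}$ to get
\begin{align*}
Q_{t,n}=(z-z_k)\,Q_{t,n}^{[k]\prime}+\ell\,Q_{t,n}^{[k]}
=Q_{t,n}^{[k]}\Big(\ell+d_n(z-z_k)\,m_{t,n}^{[k]}(z)\Big).
\end{align*}
This is the counterpart of \eqref{eq:recursion}.

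\emph{Step 2: two summation identities.} Differentiating $P_n=\prod_j(z-z_j)$ by the product rule gives $P_n'=\sum_{k}P_n^{[k]}$. Applying $\partial_z^{\ell-1}$ yields $\sum_k Q_{t,n}^{[k]}=\partial_z^{\ell}P_n=Q_{t,n}$. Applying $\partial_z^{\ell}$ instead yields $\sum_k Q_{t,n}^{[k]\prime}=Q_{t,n}'$.

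\emph{Step 3: dividing through.} Divide both identities of Step 2 by $Q_{t,n}$ and insert Step 1, at every $z$ where $Q_{t,n}$ and all $Q^{[k]}_{t,n}$ are nonzero. The first identity becomes
\begin{align*}
1=\sum_k\frac{1}{\ell+d_n(z-z_k)m_{t,n}^{[k]}(z)}.
\end{align*}
Dividing numerator and denominator by $n$ gives \eqref{eq:diff_crucial_rep_one}. The second identity becomes
\begin{align*}
d_n m_{t,n}(z)=\sum_k\frac{d_n m_{t,n}^{[k]}(z)}{\ell+d_n(z-z_k)m_{t,n}^{[k]}(z)}.
\end{align*}
Dividing the $k$-th numerator and denominator by $d_n m_{t,n}^{[k]}(z)$, and then dividing the whole equation by $d_n$, gives \eqref{eq:diff_crucial_rep_m}.

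\emph{Step 4: the exceptional set.} The only point needing care is the set where these divisions fail: the zeros of $Q_{t,n}$ and of each $Q_{t,n}^{[k]}$, and, for \eqref{eq:diff_crucial_rep_m}, also the zeros of $m_{t,n}^{[k]}$. Each $Q^{[k]}_{t,n}$ is a nonzero polynomial of degree $d_n\ge1$ (degree $n-1$ minus $\ell-1$ derivatives), so its derivative is a nonzero polynomial. Hence this set is a finite union of finite sets, which is Lebesgue-null, and the identities hold for almost every $z$.

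There is no real obstacle here; the only thing to get right is the bookkeeping of one fewer derivative in $Q^{[k]}_{t,n}$. That shift is exactly what makes $\sum_k Q^{[k]}_{t,n}=Q_{t,n}$ hold, and what makes the degrees match so that $m_{t,n}^{[k]}$ carries the probability normalization.
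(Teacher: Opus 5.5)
Your proposal is correct and is essentially the paper's own argument: the Leibniz-rule recursion of Lemma \ref{lem:diff_commu}, the summation identity $\sum_k Q^{[k]}_{t,n}=Q_{t,n}$ and its derivative, then division by $Q_{t,n}$ together with the recursion. Your explicit description of the finite exceptional set just spells out the paper's remark that the quotient identities are understood away from zeros of the denominators.
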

All quotient identities are understood away from zeros of the denominators, or after cancellation as identities of rational functions. The following recursion- and commutation-identities follow immediately from the product rule $\partial_z^j(zf)=z\partial_z^jf+j\partial_z^{j-1}f$ for any polynomial $f$ and $j\in\N$.

\begin{lemma}\label{lem:diff_commu}
We have the recursions
	\begin{align}
		\sum_{k=1}^nQ_{t,n}^{[k]}
		&=\partial_z^{\lfloor tn\rfloor-1}\sum_{k=1}^nP_n^{[k]}
		=\partial_z^{\lfloor tn\rfloor-1}P'_n=Q_{t,n},
		\label{eq:diff_sum_deleted}
	\end{align}
	and, for every $1\leq k\leq n$,
	\begin{align}
		Q_{t,n}
		=(z-z_k)Q_{t,n}^{[k]\prime}+\lfloor tn\rfloor Q_{t,n}^{[k]}
		=Q_{t,n}^{[k]}
		\left({\lfloor tn\rfloor}+{d_n}(z-z_k)m_{t,n}^{[k]}(z)\right).
		\label{eq:diff_recursion}
	\end{align}
\end{lemma}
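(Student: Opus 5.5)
The plan is to derive both identities of Lemma~\ref{lem:diff_commu} from the elementary product rule $\partial_z^j(zf)=z\partial_z^jf+j\partial_z^{j-1}f$, valid for every polynomial $f$ and $j\in\N$. It follows by induction on $j$ (or Leibniz's rule), since all derivatives of $z$ beyond the first vanish.

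For \eqref{eq:diff_sum_deleted}, I would first note that the product rule for $P_n=\prod_j(z-z_j)$ gives $P_n'=\sum_{k=1}^nP_n^{[k]}$. Applying the linear operator $\partial_z^{\lfloor tn\rfloor-1}$ to both sides and using the definition $Q_{t,n}^{[k]}=\partial_z^{\lfloor tn\rfloor-1}P_n^{[k]}$ then gives
\[
\sum_k Q_{t,n}^{[k]}=\partial_z^{\lfloor tn\rfloor-1}P_n'=\partial_z^{\lfloor tn\rfloor}P_n=Q_{t,n}.
\]
This needs $\lfloor tn\rfloor\ge1$, which holds for $n$ sufficiently large because $t>0$. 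That is the reason for the restriction ``$n$ sufficiently large'' in \eqref{eq:diff_deleted}.

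For \eqref{eq:diff_recursion}, I would write $P_n=(z-z_k)P_n^{[k]}$ and apply the product rule with $j=\lfloor tn\rfloor$ and $f=P_n^{[k]}$. The shift $z\mapsto z-z_k$ commutes with $\partial_z$, so
\[
Q_{t,n}=\partial_z^{\lfloor tn\rfloor}\bigl((z-z_k)P_n^{[k]}\bigr)=(z-z_k)\partial_z^{\lfloor tn\rfloor}P_n^{[k]}+\lfloor tn\rfloor\,\partial_z^{\lfloor tn\rfloor-1}P_n^{[k]}=(z-z_k)Q_{t,n}^{[k]\prime}+\lfloor tn\rfloor Q_{t,n}^{[k]}.
\]
To get the second form, I would factor out $Q_{t,n}^{[k]}$ and use $Q_{t,n}^{[k]\prime}=d_n m_{t,n}^{[k]}Q_{t,n}^{[k]}$, which is just the definition of $m_{t,n}^{[k]}$. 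This is valid away from the finitely many zeros of $Q_{t,n}^{[k]}$, or as an identity of rational functions after cancellation.

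There is no real obstacle here. The only points needing care are the index bookkeeping, namely that deleting a factor goes with taking one fewer derivative so that the degrees match at $d_n$, and the convention for quotients at zeros of the denominator.
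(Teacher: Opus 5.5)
Your proof is correct and follows the paper's route: the paper simply asserts that both identities follow immediately from the product rule $\partial_z^j(zf)=z\partial_z^jf+j\partial_z^{j-1}f$, and you have written out those details, namely $P_n'=\sum_k P_n^{[k]}$, the choice $j=\lfloor tn\rfloor$, $f=P_n^{[k]}$, and the requirement $\lfloor tn\rfloor\ge1$ for large $n$.
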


\begin{proof}[Proof of Proposition~\ref{prop:diff_crucial_rep}]
	Differentiate and divide \eqref{eq:diff_sum_deleted} by $d_nQ_{t,n}$, then apply
	\eqref{eq:diff_recursion} to get \eqref{eq:diff_crucial_rep_m}. Divide \eqref{eq:diff_sum_deleted} by 	$Q_{t,n}$ and use \eqref{eq:diff_recursion} to obtain \eqref{eq:diff_crucial_rep_one}.
\end{proof}

\begin{remark}
Similar statements hold for the logarithmic potential.
For $j=1,\ldots,\lfloor tn\rfloor$, we have
\begin{align*}
	\partial_z^jP_{d_n+j}(z)
	&=(z-z_{d_n+j})\partial_z^jP_{d_n+j-1}(z)
	+j\partial_z^{j-1}P_{d_n+j-1}(z)\\
	&=(d_n+j)\,\partial_z^{j-1}P_{d_n+j-1}
	\left(
	\frac{j}{d_n+j} 
	+\frac{z-z_{d_n+j}}{d_n+j}
	\frac{\partial_z^jP_{d_n+j-1}}
	{\partial_z^{j-1}P_{d_n+j-1}}
	\right).	
\end{align*}
After $\lfloor tn\rfloor$ iterations and taking the logarithm as in \eqref{eq:diff_U_m}, this leads to the recursive formula 
\begin{align}
	U_{t,n}(z)
 =\frac1{d_n}\log|P_{d_n}(z)|-\frac1{d_n}\log\binom{n}{\lfloor tn\rfloor}
+\frac1{d_n}\sum_{j=1}^{ \lfloor tn\rfloor}
\log\left|1
+\frac{z-z_{d_n+j}}{ j}
\frac{\partial_z^jP_{d_n+j-1}(z)}
{\partial_z^{j-1}P_{d_n+j-1}(z)}
\right|.
\label{eq:diff_crucial_rep_U}
\end{align}
\end{remark} 

\subsection{Convergence of the Stieltjes transform}

As in the heat-flow proof, leaving out one factor should not change the
limiting Stieltjes transform, thus we again expect $m_{t,n}^{[k]}\approx m_t(z)$
as in \eqref{eq:formal_m}. If so, then \eqref{eq:diff_crucial_rep_m} would translate to the limiting self-consistent equation, see below.
 
However, this requires attention near possible zeros
of $m_{t,n}^{[1]}\approx m_t$, which we additionally need to rule out by using the more continuous version \eqref{eq:diff_crucial_rep_one}. In this section, we shall work under the additional assumption of $\mu_0$ having a bounded density, which we shall remove in Section \ref{sec:removedensity}.
 
\begin{prop}[Limiting self-consistent equation]\label{prop:diff_m_t_limits}
Assume that $\mu_0$ has a bounded density, satisfies the $2+\delta$ logarithmic moment assumption \eqref{eq:log-ass} and
	\begin{align}\label{eq:ass_m0asymptotic}
		\lim_{|z|\to\infty}zm_0(z)=1.
	\end{align}
Then, for any $0<t<1$, every weakly convergent subsequence
	$\E\nu_{t,n}\Rightarrow\nu_t$ satisfies $m_t\neq0$ almost everywhere and
	\begin{align}
		m_t(z)
		=\frac 1 {1-t}m_0\left(z+\frac{t}{(1-t)m_t(z)}\right)
		\label{eq:diff_selfconseq} 
	\end{align}
in $L^1_{\mathrm{loc}}$. 
	If all such subsequential limits have the same Stieltjes transform
	$m_t$ on an open set $\mathcal D_t$, then
	$$
	m_{t,n}\ton m_t
	\qquad\text{almost surely in }L^1_{\mathrm{loc}}(\mathcal D_t).
	$$ 
\end{prop}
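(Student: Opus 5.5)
The plan follows the heat-flow proof of Proposition \ref{prop:m_t_limits}, with three preparatory ingredients for repeated differentiation.

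\emph{First}, leave-one-out stability $\sup_k\E\|m_{t,n}-m_{t,n}^{[k]}\|_{L^1(K)}\to0$, as in Lemma \ref{lem:leave_one_out}. From \eqref{eq:diff_sum_deleted}, \eqref{eq:diff_recursion} and $Q_{t,n}'=\sum_k Q_{t,n}^{[k]\prime}$ one gets an explicit difference formula analogous to \eqref{eq:lol_difference}:
\[
m_{t,n}-m_{t,n}^{[k]}=\frac{1}{d_n}\partial_z\log\bigl(\lfloor tn\rfloor+d_n(z-z_k)m_{t,n}^{[k]}\bigr).
\]
Since $z_k$ is independent of $m_{t,n}^{[k]}$ and has bounded density, the $L^{p_0}$ and Hölder argument from \eqref{eq:lol_p0_estimate}--\eqref{eq:Hodor} should carry over. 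The singularity now sits at $z_k=z+\lfloor tn\rfloor/(d_nm^{[k]}_{t,n})$ rather than $z-tm^{[k]}$, but it is still a single pole in $z_k$ at a location independent of $z_k$.

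\emph{Second}, concentration $m_{t,n}-\E m_{t,n}\to0$ almost surely in $L^1_{\mathrm{loc}}$, together with tightness of $\E\nu_{t,n}$, as in Lemmas \ref{lem:potential_concentration} and \ref{lem:stieltjes_concentration}. The Doob martingale of $U_{t,n}$ has increments given by $\frac1{d_n}\log|\lfloor tn\rfloor/d_n+(z-z_k)m^{[k]}_{t,n}|$ minus its conditional mean, using \eqref{eq:diff_recursion}. These are again log-moments of $|a-z_k|$ with $a$ independent of $z_k$, once the factor $|m^{[k]}|$ is split off; that factor cancels in the centered increment. Truncation at $R_n$ and Gauss--Lucas tightness then work as before.

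With these two inputs, pass to a subsequence $\E\nu_{t,n}\Rightarrow\nu_t$, so that $m^{[1]}_{t,n}\to m_t$ in $L^1(K)$ in probability. Averaging \eqref{eq:diff_crucial_rep_one} over $z_1$ gives
\[
1=\E\int\frac{\mu_0(\dd w)}{t_n+(1-t_n)(z-w)m^{[1]}_{t,n}(z)}+o(1),\qquad t_n=\lfloor tn\rfloor/n .
\]
\textbf{The main obstacle} is showing that $m_t\neq0$ almost everywhere. I would take the limit of this averaged identity. On the set $\{m_t=0\}$ the integrand tends to $1/t$, so the identity would read $1=1/t$, which is impossible for $t<1$.

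To justify the limit, I would write the integrand as $\frac{1}{(1-t_n)m}\cdot\frac{1}{z-w+t_n/((1-t_n)m)}$ when $m$ is not small. The $w$-integral is then $\frac{1}{(1-t_n)m}\,m_0\bigl(z+t_n/((1-t_n)m)\bigr)$. Hypothesis \eqref{eq:ass_m0asymptotic} gives $m_0(\zeta)\approx 1/\zeta$ for large $\zeta$, so the expression converges continuously to $1/t$ as $m\to0$. Hence $g(m,z):=\int\frac{\mu_0(\dd w)}{t+(1-t)(z-w)m}$ extends continuously in $m$, is bounded locally uniformly (using boundedness and continuity of $m_0$), and dominated convergence yields $1=g(m_t(z),z)$ almost everywhere. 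This forces $m_t\neq0$ almost everywhere.

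The same continuity applies to \eqref{eq:diff_crucial_rep_m}. Its averaged form is $\frac{n}{d_n}\E\,m_0\bigl(z+\lfloor tn\rfloor/(d_nm^{[1]})\bigr)$, and since the map $m\mapsto m_0(z+c/m)$ is continuous and bounded on $\{m\neq0\}$ and tends to $0$ as $m\to0$, we get \eqref{eq:diff_selfconseq} in $L^1_{\mathrm{loc}}$ by the triangle-inequality argument of \eqref{eq:eintollesDreieck}.

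Finally, if all subsequential limits have the same $m_t$ on $\mathcal D_t$, then $\E m_{t,n}\to m_t$ in $L^1_{\mathrm{loc}}(\mathcal D_t)$, and concentration upgrades this to almost sure convergence along a countable exhaustion by compacts, exactly as in Proposition \ref{prop:m_t_limits}.
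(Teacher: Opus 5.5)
Your proposal is correct and follows essentially the paper's route: leave-one-out stability and concentration (the paper's Lemmas \ref{lem:diff_leave_one_out} and \ref{lem:diff_stieltjes_concentration}), averaging \eqref{eq:diff_crucial_rep_one} over $z_1$, extending $m_0(z+\tau/w)/w$ continuously by $1/\tau$ at $w=0$ via \eqref{eq:ass_m0asymptotic} so that $\{m_t=0\}$ would force $1=1/t$, and then upgrading via uniqueness plus concentration; deriving \eqref{eq:diff_selfconseq} from the averaged \eqref{eq:diff_crucial_rep_m} is a harmless redundancy, since $g(m_t,z)=1$ with $m_t\neq0$ already gives it after multiplying by $(1-t)m_t$. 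Two details are thinner than in the paper: the heat-flow leave-one-out bound does not carry over verbatim, because the numerator term $(z-z_k)\partial_z m^{[k]}_{t,n}$ depends on $z_k$ (the paper restricts to $|z_k|<R$ and uses \eqref{eq:diff_inverse_moment}), and averaging \eqref{eq:diff_crucial_rep_one} needs absolute integrability, not just boundedness of $g$ (the paper bounds the conditional absolute moment by $1+|\log|m^{[1]}_{t,n}||$).
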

In particular, \eqref{eq:diff_selfconseq} implies $\|m_t\|_\infty\leq\|m_0\|_\infty/(1-t)$.  Assumption \eqref{eq:ass_m0asymptotic} will be necessary to continuously extend the self-consistent equation and is automatically satisfied in the case of $\mu_0$ being compactly supported or rotationally invariant.
For the proof, we need the following analogues of the heat-flow lemmas \ref{lem:leave_one_out} and \ref{lem:stieltjes_concentration}.

\begin{lemma}[Stability]\label{lem:diff_leave_one_out}
Assume that $\mu_0$ has bounded density. Let $K\subset\mathbb C$ be compact
, then
	\begin{align}
		\sup_{k\leq n}\E
		\left\|m_{t,n}-m_{t,n}^{[k]}\right\|_{L^1(K)} 
		\longrightarrow0.
		\label{eq:diff_lol_all_p}
	\end{align}
\end{lemma}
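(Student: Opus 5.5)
The plan is to copy the proof of Lemma \ref{lem:leave_one_out}. First write $m_{t,n}-m_{t,n}^{[k]}$ as a fraction whose denominator is affine in the independent root $z_k$. Then bound its $L^{p_0}(K\times\P)$-norm for some $1/2<p_0<1$ by a negative power of $n$, conditioning on $z_j$, $j\neq k$, and integrating out $z_k$ using the bounded density of $\mu_0$. Finally upgrade to $L^1$ by the H\"older interpolation \eqref{eq:Hodor}. That last step uses the uniform $L^q(K)$ bound \eqref{eq:mLp} for some $1<q<2$, applied to the probability-normalized transforms $m_{t,n}$ and $m_{t,n}^{[k]}$. By exchangeability the supremum over $k$ is harmless.

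\emph{Step 1 (identity).} Take the logarithmic derivative of the recursion \eqref{eq:diff_recursion}, writing $D_k(z):=\lfloor tn\rfloor+d_n(z-z_k)m_{t,n}^{[k]}(z)$. Dividing by $d_n$ gives
\begin{align*}
m_{t,n}-m_{t,n}^{[k]}=\frac{m_{t,n}^{[k]}+(z-z_k)\,\partial_z m_{t,n}^{[k]}}{D_k}.
\end{align*}
Since $m^{[k]}_{t,n}$ and $\partial_z m^{[k]}_{t,n}$ do not depend on $z_k$, by subadditivity of $x\mapsto x^{p_0}$ the two summands of the numerator can be treated separately.

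\emph{Step 2 (first summand).} We have $m^{[k]}_{t,n}/D_k=1/\bigl(d_n(z-z_k+b_k)\bigr)$ with $b_k:=\lfloor tn\rfloor/(d_nm^{[k]}_{t,n}(z))$, which is independent of $z_k$. As in \eqref{eq:Lploc_Stieltjes}, the bounded density gives a conditional $p$-th moment $\lesssim d_n^{-p}\sup_a\int|a-w|^{-p}\mu_0(\dd w)\lesssim n^{-p}$ for $p<2$, uniformly in $z$ and even when $m^{[k]}_{t,n}(z)=0$. This term is therefore as harmless as in the heat flow.

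\emph{Step 3 (second summand, main obstacle).} Now $z_k$ appears in the numerator as well, and $D_k$ degenerates where $m^{[k]}_{t,n}$ is small. I would split $K$ according to whether $|m^{[k]}_{t,n}(z)|\ge\eta$ or $<\eta$, for a threshold $\eta=\eta_n$ to be tuned.

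On $\{|m^{[k]}_{t,n}|<\eta\}$, the denominator satisfies $|D_k|\ge tn/2$ unless $|z-z_k|\ge \lfloor tn\rfloor/(2d_n\eta)$. With $\eta$ small this exceptional event has probability $\mu_0(\{|w|\gtrsim 1/\eta\})\to0$ uniformly for $z\in K$. Off this event we get $|(z-z_k)/D_k|\lesssim (1+|z_k|\wedge \eta^{-1})/n$. On the event itself I would bound crudely using $|z-z_k|/|D_k|\le 1/(d_n|m|)+\lfloor tn\rfloor/(d_n|m||D_k|)$, combined with the conditional bound on $\E|D_k|^{-p}$ from Step 2, and then H\"older.

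On $\{|m^{[k]}_{t,n}|\ge\eta\}$, write $(z-z_k)/D_k=(d_nm^{[k]}_{t,n})^{-1}\bigl(1-\lfloor tn\rfloor/D_k\bigr)$. Together with the conditional bound $\E_{z_k}|D_k|^{-p}\lesssim (d_n|m^{[k]}_{t,n}|)^{-p}$ this gives a factor $\lesssim (n\eta)^{-p_0}$.

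In both regimes one is left with $|\partial_z m^{[k]}_{t,n}|^{p_0}$. As in \eqref{eq:someLpbound}, we have $|\partial_z m^{[k]}_{t,n}|^{p_0}\le d_n^{-p_0}\sum_{\xi}|z-\xi|^{-2p_0}$, a sum over the $d_n$ zeros $\xi$ of $Q^{[k]}_{t,n}$. Its integral over $K$ is $\lesssim d_n^{1-p_0}$, so the total is $\lesssim n^{1-2p_0}\eta^{-p_0}\cdot\mathrm{polylog}$ plus a tail term $o(1)$. Choosing $\eta_n\to0$ slowly makes everything vanish.

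The delicate point is the interplay between small values of $m^{[k]}_{t,n}$ and large $|z_k|$, since only logarithmic moments are available. If the above split is too lossy there, I would instead first truncate $\mu_0$ to $\mu_0^{(R)}$ as in \eqref{eq:truncation}. The replacement changes $m_{t,n}$ and $m^{[k]}_{t,n}$ by little in $L^1(K)$, by the one-step recursion \eqref{eq:diff_recursion} and the argument of \eqref{eq:Rbound}. With compactly supported roots the factor $|z-z_k|$ is bounded, and the estimates above close directly.
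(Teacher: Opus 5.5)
\textbf{Gap in Step 3: the exceptional event.} Steps 1 and 2, the regimes of Step 3 away from the exceptional event, and the H\"older upgrade are fine. On $\{|m^{[k]}_{t,n}|\ge\eta\}$ the $\lfloor tn\rfloor/D_k$ term actually costs $(n\eta)^{-p_0}\eta^{-p_0}$, but that is harmless. The gap is the exceptional event $E=\{|m^{[k]}_{t,n}(z)|<\eta,\ |z-z_k|\ge\lfloor tn\rfloor/(2d_n\eta)\}$.

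Your crude bound there splits into two pieces. The first, $|\partial_z m^{[k]}_{t,n}|/(d_n|m^{[k]}_{t,n}|)=|\partial_z\log m^{[k]}_{t,n}|/d_n$, is harmless, since it is a Stieltjes transform of total variation at most $2$. The second piece is
\begin{align*}
\frac{\lfloor tn\rfloor\,|\partial_z m^{[k]}_{t,n}|}{d_n|m^{[k]}_{t,n}|\,|D_k|}
=\frac{\lfloor tn\rfloor}{d_n}\,\bigl|\partial_z\bigl(Q^{[k]}_{t,n}/Q^{[k]\prime}_{t,n}\bigr)\bigr|\,\frac{1}{|a-z_k|},\qquad a=z+b_k .
\end{align*}
On $E$ the root $z_k$ is far out and may lie near $a$. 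Bounded density therefore only gives $\E[|a-z_k|^{-p}\mid z_j,j\neq k]\lesssim1$, not the decay $\lesssim|a|^{-p}$ you would have for bounded $z_k$. After H\"older you must still integrate $|\partial_z(Q^{[k]}_{t,n}/Q^{[k]\prime}_{t,n})|^{p_0}$ over $K$.

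Neither \eqref{eq:mLp} nor the zero counting behind \eqref{eq:delzmint} controls this quantity. Near each critical point $\zeta$ of $Q^{[k]}_{t,n}$ it behaves like $|d_n\partial_z m^{[k]}_{t,n}(\zeta)|^{-p_0}|z-\zeta|^{-2p_0}$, and nothing in your argument bounds these coefficients. So the small factor $\mu_0(\{|w|\gtrsim1/\eta\})$ multiplies a quantity with no bound uniform in $n$, and Step 3 does not close.

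The fallback does not repair this as stated, for three reasons:
\begin{enumerate}
\item \eqref{eq:Rbound} compares logarithmic potentials, not Stieltjes transforms; passing to $m$ needs an extra smoothing argument.
\item It uses \eqref{eq:log-ass}, which is not a hypothesis of this lemma.
\item A one-step replacement bound written directly for $m_{t,n}$ would need the very stability estimate you are proving.
\end{enumerate}

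\textbf{The missing idea.} On large $|z_k|$, bound the whole difference rather than its second summand, and cut off only the removed root. Both $Q_{t,n}$ and $Q^{[k]}_{t,n}$ have degree $d_n$, so $m_{t,n}-m^{[k]}_{t,n}$ is the Stieltjes transform of a signed measure of total variation at most $2$. Hence \eqref{eq:mLp} gives $\int_K|m_{t,n}-m^{[k]}_{t,n}|^{p_0}\,\dd\mathcal L\le C_{K,p_0}$ deterministically, and so, with no moment assumption,
\begin{align*}
\E\Big[\mathbf 1_{B_R^c}(z_k)\int_K|m_{t,n}-m^{[k]}_{t,n}|^{p_0}\,\dd\mathcal L\Big]\le C_{K,p_0}\,\mu_0(B_R^c).
\end{align*}
This already rescues your split, since $E$ forces $|z_k|\gtrsim1/\eta$ for $z\in K$.

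On $\{|z_k|<R\}$ the $\eta$-split is unnecessary. Here $|z-z_k|\le C_{K,R}$, and there are two cases:
\begin{enumerate}
\item If $d_n|m^{[k]}_{t,n}(z)|\le\lfloor tn\rfloor/(2C_{K,R})$, then $|D_k|\ge\lfloor tn\rfloor/2$.
\item Otherwise bounded density gives $\E[|D_k|^{-p_0}\mid z_j,j\neq k]\lesssim(d_n|m^{[k]}_{t,n}(z)|)^{-p_0}\lesssim_{K,R}\lfloor tn\rfloor^{-p_0}$.
\end{enumerate}
Thus $\E[|D_k|^{-p_0}\mathbf 1_{B_R}(z_k)\mid z_j,j\neq k]\lesssim_{K,R}\lfloor tn\rfloor^{-p_0}$ uniformly in the value of $m^{[k]}_{t,n}(z)$. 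After factoring out $m^{[k]}_{t,n}$, this is the paper's \eqref{eq:diff_inverse_moment}.

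Combined with \eqref{eq:delzmint}, this yields the bound $C_{K,p_0,R,t}(n^{-p_0}+n^{1-2p_0})+C_{K,p_0}\mu_0(B_R^c)$. Let $n\to\infty$, then $R\to\infty$, and upgrade to $L^1$ by H\"older as you planned.
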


\begin{lemma}[Concentration of the Stieltjes transform]
	\label{lem:diff_stieltjes_concentration}
		Assume that $\mu_0$ has a bounded density and satisfies the $2+\delta$ logarithmic moment assumption \eqref{eq:log-ass}. Fix $t\ge 0$ and compact $K\subset\mathbb C$, then
\begin{align}
	\left\|m_{t,n}-\E m_{t,n}\right\|_{L^1(K)}\longrightarrow0
	\label{eq:diff_stieltjes_concentration}
\end{align}
	$\P$-almost surely and in $L^1(\P)$. Moreover, the statement remains true if $t_n\to0$ is a vanishing sequence depending on $n$.
\end{lemma}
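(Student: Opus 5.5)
We prove Lemma \ref{lem:diff_stieltjes_concentration} along the route of Lemma \ref{lem:stieltjes_concentration}. We first establish concentration of the normalized logarithmic potential $U_{t,n}$ from \eqref{eq:diff_U_m} around its mean in $L^1_{\mathrm{loc}}$, almost surely. Once that holds, the distributional Poisson equation $\Delta U_{t,n}=2\pi\nu_{t,n}$ gives vague convergence $\nu_{t,n}-\E\nu_{t,n}\to0$. This signed measure has total variation at most $2$, so the vague-convergence criterion for Stieltjes transforms converts it into $m_{t,n}-\E m_{t,n}\to0$ in $L^1_{\mathrm{loc}}$. The uniform bound \eqref{eq:mLp} together with dominated convergence then gives the $L^1(\P)$ statement.

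For the potential we again use the Doob martingale $M_k=\E[U_{t,n}(z)\mid\mathcal F_k]$, $\mathcal F_k=\sigma(z_1,\dots,z_k)$. The key input is the recursion \eqref{eq:diff_recursion}. Since $\frac{d_n!}{n!}Q_{t,n}^{[k]}$ has leading coefficient $\frac{d_n!(n-1)!}{n!\,(d_n-1)!}=d_n/n$, we can write
\begin{align*}
d_nU_{t,n}(z)=\log\Big|\tfrac{d_n!}{n!}Q^{[k]}_{t,n}(z)\Big|+\log\Big|\tfrac{\lfloor tn\rfloor}{d_n}+(z-z_k)m^{[k]}_{t,n}(z)\Big|+\log\tfrac{d_n}{n}\cdot 0,
\end{align*}
where the last term is only a deterministic constant. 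The first term is independent of $z_k$. The second equals $\log|m^{[k]}_{t,n}(z)|+\log|z-z_k-a|$ with $a:=z-\frac{\lfloor tn\rfloor}{d_nm^{[k]}_{t,n}(z)}$, because $\frac{\lfloor tn\rfloor}{d_n}+(z-z_k)m=m\big(z-z_k+\frac{\lfloor tn\rfloor}{d_nm}\big)$. Only $\log|a'-z_k|$ with $a':=z+\frac{\lfloor tn\rfloor}{d_nm^{[k]}_{t,n}(z)}$ depends on $z_k$, and $a'$ is independent of $z_k$. The martingale increment is therefore
\begin{align*}
M_k-M_{k-1}=\frac1{d_n}\E\Big[\log|a'-z_k|-U_0(a')\,\Big|\,\mathcal F_k\Big],
\end{align*}
which has exactly the form of \eqref{eq:Doob_increment}, with $1/n$ replaced by $1/d_n\le 1/((1-t)n)$. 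The case $a'=\infty$, occurring when $m^{[k]}_{t,n}(z)=0$, happens only on a null set of $z$, since $Q^{[k]\prime}_{t,n}$ has finitely many zeros; it is handled by Fubini.

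From here the compact-support argument transfers verbatim. The uniform log-moment bound \eqref{eq:uniform_log_moments} gives Bernstein conditions for the increments, hence the exponential bound \eqref{eq:potential_concentration} with $n$ replaced by $(1-t)n$. Borel--Cantelli and Fubini then give pointwise-a.e. concentration, and the Jensen--Fubini exponential step gives the quantitative $L^1(K)$ bound as in \eqref{eq:2or3}. For $t_n\to0$ we have $d_n\ge n/2$ eventually, so the constants remain uniform.

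The main obstacle is removing compact support while keeping tightness, because the Walsh bound is no longer directly available for derivatives. Gauss--Lucas, however, replaces it: all roots of $Q_{t,n}$ lie in the convex hull of the $z_j$, so under truncation to $B_R$ the supports of the resampled $\nu^{(R)}_{t,n}$ stay in $B_R$. For the resampling estimate \eqref{eq:onesteptruncation} we replace one root $z_k$ by $z_k^{(R)}$. By the factorization above, the potential changes by $\frac1{d_n}(\log|a'-z_k|-\log|a'-z_k^{(R)}|)$ with $a'$ independent of the pair, so the bound \eqref{eq:Rbound} holds with the extra factor $1/(1-t)$. With $R_n=\exp(n^{1/(2+\delta)})$ and the argument that almost surely no root is truncated eventually, we obtain almost sure $L^1_{\mathrm{loc}}$ concentration of $U_{t,n}$. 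Tightness of $\E\nu_{t,n}$ follows as in \eqref{eq:tightnessEmu}, which completes the proof.
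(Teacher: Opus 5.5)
Your proposal is correct and follows the paper's route: Doob-martingale/Bernstein concentration of $U_{t,n}$ through the increment \eqref{eq:diff_Doob_increment}, then Gauss--Lucas with truncation at $R_n=\exp(n^{1/(2+\delta)})$ for the non-compact case, then the Poisson equation and the vague-convergence criterion to pass to $m_{t,n}$, and finally bounded convergence for the $L^1(\P)$ statement. The only blemishes are bookkeeping slips that do not affect the martingale increments: the leading coefficient of $\frac{d_n!}{n!}Q^{[k]}_{t,n}$ is $1/n$, the additive constant is $\log d_n$, and your intermediate point $a$ is garbled before you correct it to $a'$.
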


This concentration of the Stieltjes transform, follows from the following concentration of the logarithmic potential exactly in the same way as Lemma \ref{lem:stieltjes_concentration} followed from Lemma \ref{lem:potential_concentration}.

\begin{lemma}[Concentration of the logarithmic potential]
	\label{lem:diff_potential_concentration}
		Assume that $\mu_0$ has a bounded density and satisfies the $2+\delta$ logarithmic moment assumption \eqref{eq:log-ass}.	
	Then, for each fixed $t\geq0$, the sequence $\E\nu_{t,n}$ is tight and, almost surely,
	\begin{align}
		U_{t,n}-\mathbb E U_{t,n}
		\longrightarrow0
		\qquad
		\text{in }L^1_{\mathrm{loc}}(\mathbb C)
		\label{eq:diff_potential_as_convergence}.
	\end{align}
	Moreover, the statement remains true if $t_n\to0$ is a vanishing sequence depending on $n$. If additionally, $\mu_0$ is compactly supported,  $K\subseteq\C$ is compact and  $0<\varepsilon<1$ is fixed, then there is a constant $c>0$, such that almost surely for sufficiently large $n$,
	\begin{align}\label{eq:jointwork}
		\sup_{0\le t<1-\varepsilon}\|U_{ t,n}-\mathbb EU_{ t,n}\|_{L^1(K)}\le c\sqrt{\frac{\log n}n}.
	\end{align}
\end{lemma}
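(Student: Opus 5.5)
We will mirror the proof of Lemma \ref{lem:potential_concentration}, replacing the heat-flow recursion \eqref{eq:recursion} by the differentiation recursion \eqref{eq:diff_recursion}. First assume $\supp\mu_0\subseteq B_R$. Fix $z$ and consider the Doob martingale $M_k:=\E[U_{t,n}(z)\mid\mathcal F_k]$ with $\mathcal F_k=\sigma(z_1,\dots,z_k)$. By \eqref{eq:diff_recursion},
\begin{align*}
U_{t,n}(z)=U^{[k]}_{t,n}(z)+\frac1{d_n}\log\Bigl|\tfrac{\lfloor tn\rfloor}{n}+\tfrac{d_n}{n}(z-z_k)m_{t,n}^{[k]}(z)\Bigr|,
\end{align*}
where $U^{[k]}_{t,n}:=\frac1{d_n}\log|\frac{(d_n)!}{(n-1)!}Q_{t,n}^{[k]}|$ and $m^{[k]}_{t,n}$ are independent of $z_k$. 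When $m^{[k]}_{t,n}(z)\neq0$, the second term equals $\frac1{d_n}\bigl(\log|z-z_k-a|+\log|\tfrac{d_n}{n}m^{[k]}_{t,n}(z)|\bigr)$ with $a:=-\lfloor tn\rfloor/(d_n m_{t,n}^{[k]}(z))$, which is independent of $z_k$. Only $\log|z-a-z_k|$ depends on $z_k$, so exactly as in \eqref{eq:Doob_increment}
\begin{align*}
M_k-M_{k-1}=\frac1{d_n}\E\Bigl[\log|z-a-z_k|-U_0(z-a)\,\Big|\,\mathcal F_k\Bigr].
\end{align*}
The event $m^{[k]}_{t,n}(z)=0$ is Lebesgue-null in $z$ for a.e. realization, so it can be ignored via Fubini. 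The case $a=\infty$ is also harmless: on $|a|>2R$ the difference $\log|z-a-z_k|-\log|z-a|$ is bounded by $O(R/|a|)$, and \eqref{eq:uniform_log_moments} is uniform in the shift anyway.

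Next I apply \eqref{eq:uniform_log_moments} conditionally on $\mathcal F_{k-1}$ and the other roots. This gives the Bernstein moment bound $\E[|M_k-M_{k-1}|^q\mid\mathcal F_{k-1}]\le q!(C_R/d_n)^q$. Since $d_n\ge \varepsilon n$ for $t<1-\varepsilon$, the constants are uniform in $t$. The Chernoff argument then yields \eqref{eq:potential_concentration} with $cn$ replaced by $c\varepsilon n$ (using $n$ increments and $d_n^2\gtrsim \varepsilon^2 n^2$), uniformly in $t$. The Jensen–Fubini exponential-moment step gives, for each fixed $t$, the tail bound $\P(\|U_{t,n}-\E U_{t,n}\|_{L^1(K)}>c\sqrt{\log n/n})\le 2n^{-3}$, after enlarging $c$.

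For the supremum in \eqref{eq:jointwork}, note that $U_{t,n}$ depends on $t$ only through $\lfloor tn\rfloor\in\{0,\dots,n\}$. So the supremum is a maximum over at most $n+1$ polynomials, and a union bound gives probability $\le 2(n+1)n^{-3}$, which is summable. Borel–Cantelli then concludes.

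Tightness follows because, by Gauss–Lucas, all roots of $Q_{t,n}$ lie in the convex hull of $\supp\mu_0$. The qualitative $L^1_{\mathrm{loc}}$ statement then follows from Borel–Cantelli pointwise plus uniform $L^p$ bounds \eqref{eq:ULp}, as before. For $t_n\to0$ the same constants work, since $d_n\ge n/2$ eventually.

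To remove compact support, I will use the same resampling truncation \eqref{eq:truncation}, replacing one root at a time. Via the recursion above, each replacement changes $U_{t,n}$ by $\frac1{d_n}(\log|z-a-z_k|-\log|z-a-z_k^{(R)}|)$ with $a$ independent of the pair. So \eqref{eq:logbound}–\eqref{eq:Rbound} carry over with an extra factor $n/d_n\le 1/(1-t)$. Tightness of $\E\nu_{t,n}$ uses Gauss–Lucas for the truncated polynomial together with the test-function argument \eqref{eq:tightnessEmu}. Choosing $R_n=\exp(n^{1/(2+\delta)})$ and using that eventually no roots are truncated finishes the proof, as in Lemma \ref{lem:potential_concentration}.

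The main obstacle is the increment representation when $m^{[k]}_{t,n}(z)$ is small or zero, where the shift $a$ blows up. I will handle it via the uniform-in-$a$ bound \eqref{eq:uniform_log_moments}, which holds for all $a\in\C$, together with the null-set argument. If that proves insufficient, I will work directly with $\log|\tfrac{\lfloor tn\rfloor}{n}+\tfrac{d_n}{n}(z-z_k)m|$ and bound its centered moments, using the fact that it is $\log|z_k-b|$ plus a $z_k$-independent constant.
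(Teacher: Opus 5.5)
Your proposal is correct and follows the paper's proof essentially step for step: the Doob martingale with increments read off from \eqref{eq:diff_recursion}, Bernstein moments of size $C_R/d_n$ with $d_n\ge\varepsilon n$, the Jensen--Fubini--Chernoff step, a union bound over the at most $n+1$ values of $\lfloor tn\rfloor$ to get a summable tail, Gauss--Lucas for tightness, and the $R_n=\exp(n^{1/(2+\delta)})$ truncation with the extra factor $n/d_n$. The only slips are cosmetic: the sub-Gaussian part of the Chernoff exponent scales like $\varepsilon^2 n$ rather than $\varepsilon n$, and the case $\lfloor tn\rfloor=0$, where $Q^{[k]}_{t,n}$ is undefined, deserves a separate one-line remark that $U_{t,n}$ is then an i.i.d.\ sum; neither affects the argument for fixed $\varepsilon$.
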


Note in particular that the constant of the concentration inequality is uniformly bounded as $t_n\to 0$, since then $d_n\geq(1-t_n)n\sim n$. Again, let us first apply these Lemmas, before we prove them.

\begin{proof}[Proof of Proposition~\ref{prop:diff_m_t_limits}]
If $\mu_0$ were compactly suppported, then Gauss--Lucas theorem~\cite[Theorem 6.1]{Marden} implies that all our root measures $\nu_{t,n}$ are supported on the same compact set being the convex hull $ \operatorname{conv}(\operatorname{supp}\mu_0)$. More generally, tightness of $(\E\nu_{t,n})_n$ follows from Lemma \ref{lem:diff_potential_concentration}. Along a
	weakly convergent subsequence $\E\nu_{t,n}\Rightarrow\nu_t$ we have $\E m_{t,n}\to m_t$ in $L^1_{\mathrm{loc}}$. For any
	compact $K\subset\C$, Lemmas~\ref{lem:diff_leave_one_out} and
	\ref{lem:diff_stieltjes_concentration} give
	\begin{align*}
		\E\|m_{t,n}^{[1]}-m_t\|_{L^1(K)}
		&\leq\E\|m_{t,n}^{[1]}-m_{t,n}\|_{L^1(K)}
		+\E\|m_{t,n}-\E m_{t,n}\|_{L^1(K)}
		+\|\E m_{t,n}-m_t\|_{L^1(K)}\to0.
	\end{align*}
	Consequently, $\frac{d_n}{n} m_{t,n}^{[1]}\to(1-t) m_t$ in
	$L^1(\P\times \mathcal L|_K)$. 
	On the other hand, we want to average the recursion \eqref{eq:diff_crucial_rep_one} first over $z_k$ and then over the remaining i.i.d.~$z_j$, giving 
	\begin{align}
	1= \E\left[\frac{ m_0\Big(z+\frac{\lfloor tn\rfloor}{d_nm_{t,n}^{[1]}(z)} \Big)}{\frac{d_n}{n} m_{t,n}^{[1]}(z)}\right]\label{eq:Emo}
	\end{align}
for all $n \in\N$ and almost all $z\in\C$. To justify integrability, set $\tau=\lfloor tn\rfloor /n$ and $w=\frac{d_n}{n} m_{t,n}^{[1]}(z)$ and one can check that 
	$$\E\big[|\tau+(z-z_1)w|^{-1}|z_j,j\neq 1\big]\lesssim 1+\log(1+|w|^{-1})\lesssim 1+ \big|\log| m_{t,n}^{[1]}(z)|\big|\in L^1(\P\times \mathcal L |_K).$$
For the first inequality, note that bounded density and assumption \eqref{eq:ass_m0asymptotic} imply  $\E|a-z_1|^{-1}\lesssim\frac{1+\log(2+|a|)}{1+|a|}$ via small ball estimates, and the final integrability follows from  \eqref{eq:diff_crucial_rep_one}.
Assumption \eqref{eq:ass_m0asymptotic} also allows the continuous extension
\begin{align}
	\frac{m_0(z+\tau/w)}{w}
	=\frac1\tau+o(1),\qquad w\to0,\label{eq:context}
\end{align}
	uniformly for $z\in K$ and $\tau\in[t/2,2t]$. Since $m_0$ is bounded and uniformly continuous, the extended quotient tends
	uniformly to zero as $w\to\infty$ and becomes bounded and uniformly continuous in $(z,\tau,w)$ in their respective regions.
Using $\frac{\lfloor tn\rfloor}{d_n}\to t/(1-t)$ and completely analogous to \eqref{eq:eintollesDreieck}, we end up with
	\begin{align}
		\Bigg\|\frac{ m_0\Big(\cdot+\frac{t}{(1-t)m_{t }} \Big)}{(1-t)m_t}-1\Bigg\|_{L^1(K)}
		&\leq
		\E\Bigg\|
		\frac{ m_0\Big(\cdot+\frac{t}{(1-t)m_{t }} \Big)}{(1-t)m_t}
		- \frac{ m_0\Big(\cdot+\frac{\lfloor tn\rfloor}{d_nm_{t,n}^{[1]} } \Big)}{\frac{d_n}{n} m_{t,n}^{[1]} } 
		\Bigg\|_{L^1(K)}
		\longrightarrow0.\label{eq:nonotn}
	\end{align}
Crucially, on $\{z\in\C: m_t(z)=0\}$, the quotient equals $1/t\neq1$ by our assumption $0<t<1$.
	Hence $m_t\neq0$ almost everywhere, and the claimed self consistent equation \eqref{eq:diff_selfconseq} follows. 
	
	Finally, if all subsequential limits have the same Stieltjes transform on
	$\mathcal D_t$, compactness implies convergence of the full sequence
	$\E m_{t,n}\to m_t$ in $L^1_{\mathrm{loc}}(\mathcal D_t)$ 
	and Lemma~\ref{lem:diff_stieltjes_concentration} yields almost sure convergence. 
%
\end{proof}

\subsection{Removing the density assumption}\label{sec:removedensity}

\begin{prop}
	\label{prop:diff_regularization}
	Assume that $\mu_0$ satisfies \eqref{eq:asslocallog}.
	Let $0\le t_n\to t<1$ with $\sup_n t_n<1$, and define the regularized measures $\mu_0^\varepsilon
	:=\mu_0*\operatorname{Unif}(B_\varepsilon)	$.
	These measures have bounded densities, satisfy \eqref{eq:log-ass},	and converge weakly to $\mu_0$ as $\varepsilon\to0$.
	
	If, for every $\varepsilon>0$, the empirical root
	measures associated with i.i.d.\ roots of law $\mu_0^\varepsilon$ converge weakly almost surely $
	\nu_{t_n,n}^\varepsilon\Rightarrow\nu_t^\varepsilon
	$ to some deterministic distribution $\nu_t^\varepsilon$, then $\nu_t^\varepsilon\Rightarrow\nu_t$ as $\varepsilon\to0$ and we have almost sure weak convergence of the unregularized distributions
	$$
	\nu_{t_n,n}\Rightarrow\nu_t\qquad\text{ as }n\to\infty.
	$$
\end{prop}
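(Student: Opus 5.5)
The plan is a coupling argument at the level of logarithmic potentials, reusing the one-root replacement trick from the truncation step in the proof of Lemma \ref{lem:potential_concentration}. First the routine properties of $\mu_0^\varepsilon$. The density is bounded by $(\pi\varepsilon^2)^{-1}$. The bound \eqref{eq:log-ass} holds because $\log(1+|w+\varepsilon u|)\le\log(1+|w|)+\log(1+\varepsilon)$. Weak convergence $\mu_0^\varepsilon\Rightarrow\mu_0$ is clear.

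\emph{Coupling and one-step replacement.} We couple the two root sets by setting $z_j^\varepsilon:=z_j+\varepsilon u_j$, where $u_j\sim\operatorname{Unif}(B_1)$ are i.i.d.\ and independent of the $z_j$. We then replace the roots one at a time, as in \eqref{eq:onesteptruncation}. By the recursion \eqref{eq:diff_recursion}, at the $k$-th step the intermediate potential (with normalization \eqref{eq:diff_U_m}) changes by
$$\frac1{d_n}\Bigl(\log\bigl|\tau+(z-z_k)w\bigr|-\log\bigl|\tau+(z-z_k-\varepsilon u_k)w\bigr|\Bigr)=\frac1{d_n}\Bigl(\log|a-z_k|-\log|a-z_k^\varepsilon|\Bigr).$$
Here $\tau=\lfloor t_nn\rfloor$, $w=d_n\tilde m^{[k]}_{t_n,n}(z)$ and $a=z+\tau/w$. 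The quantity $a$ depends only on the other $n-1$ (partially replaced) roots, so it is independent of the pair $(z_k,u_k)$. Conditioning on the other roots, integrating over $z\in K$ and summing over $k$ gives
$$\E\|U_{t_n,n}-U^\varepsilon_{t_n,n}\|_{L^1(K)}\le \frac{n}{d_n}\mathcal L(K)\,\eta(\varepsilon),\qquad \eta(\varepsilon):=\sup_{a\in\C}\E\bigl|\log|a-z_1|-\log|a-z_1-\varepsilon u_1|\bigr|.$$
Since $\sup_nt_n<1$, the factor $n/d_n$ is bounded uniformly in $n$.

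\emph{Main step: $\eta(\varepsilon)\to0$.} I expect this to be the main obstacle, because no density is assumed for $\mu_0$. Using \eqref{eq:somelogineq}, I would split into the region $\{|a-z_1|\ge\rho\}$ and its complement. On $\{|a-z_1|\ge\rho\}$ the difference is at most $\varepsilon/\rho$ once $\varepsilon<\rho/2$. On $\{|a-z_1|<\rho\}$ I use Hölder with exponent $2+\delta$ together with the uniform local log-moment in \eqref{eq:asslocallog}. The same moment bound holds for $z_1^\varepsilon$ uniformly in $\varepsilon$, since $\sup_a\E|\log(1\wedge|a-z_1-\varepsilon u|)|^{2+\delta}$ is bounded uniformly in $u$. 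The required smallness of the region comes from the tail bound $\sup_a\P(|a-z_1|<\rho)\le C|\log\rho|^{-(2+\delta)}$, which follows from \eqref{eq:asslocallog} via Markov's inequality. Choosing $\rho=\sqrt\varepsilon$ then gives $\eta(\varepsilon)\to0$.

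\emph{Almost sure upgrade and identification of the limit.} The difference $U_{t_n,n}-U^\varepsilon_{t_n,n}$ is a function of the independent pairs $(z_k,u_k)$. Its Doob martingale increments satisfy the same conditional moment bounds as in Lemma \ref{lem:diff_potential_concentration}, again via the uniform log moments, after truncating at $R_n=\exp(n^{1/(2+\delta)})$. The same Borel--Cantelli argument then yields
$$\limsup_n\|U_{t_n,n}-U^\varepsilon_{t_n,n}\|_{L^1(K)}\le C_K\eta(\varepsilon)\quad\text{almost surely},$$
simultaneously for countably many $\varepsilon$ and $K$. Next, the assumed convergence $\nu^\varepsilon_{t_n,n}\Rightarrow\nu^\varepsilon_t$, combined with the uniform $L^p$ bounds \eqref{eq:ULp} and tightness, gives $U^\varepsilon_{t_n,n}\to U^\varepsilon_t$ in $L^1_{\mathrm{loc}}$. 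Consequently $(U^\varepsilon_t)_\varepsilon$ is Cauchy in $L^1_{\mathrm{loc}}$ as $\varepsilon\to0$, with a limit $U_t$. Set $\nu_t:=\frac1{2\pi}\Delta U_t$. Testing against $\Delta\varphi$ for $\varphi\in\mathcal C_c^2$ gives $\nu_t^\varepsilon\to\nu_t$ and $\nu_{t_n,n}\to\nu_t$ vaguely, almost surely. Tightness of $(\E\nu_{t_n,n})$, exactly as in \eqref{eq:tightnessEmu}, shows that no mass escapes, so $\nu_t$ is a probability measure and both vague convergences are weak.
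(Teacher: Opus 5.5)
Your coupling $z_j^\varepsilon=z_j+\varepsilon u_j$ and the one-root replacement through \eqref{eq:diff_recursion} match the paper's first step. So does the proof of $\eta(\varepsilon)\to0$ by splitting at $\rho=\sqrt\varepsilon$; the paper truncates $\log$ at level $\sqrt\varepsilon$, which is the same estimate.

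The gap is in the almost sure upgrade. You claim that the Doob increments of $U_{t_n,n}-U^\varepsilon_{t_n,n}$ satisfy ``the same conditional moment bounds as in Lemma~\ref{lem:diff_potential_concentration}'', and then you rerun the Chernoff/Borel--Cantelli argument. Those are the Bernstein bounds $q!\,(C_R/d_n)^q$ for \emph{all} $q\ge2$. They rest on \eqref{eq:uniform_log_moments}, i.e.\ on the small-ball estimate $\P(|w-z_k|<e^{-x})\le Ce^{-2x}$, which comes from a bounded density. Here the unregularized roots $z_k\sim\mu_0$ have no density. In general \eqref{eq:asslocallog} controls $\sup_a\E\bigl|\log|a-z_1|-\log(1+|a|)\bigr|^q$ only for $q\le 2+\delta$. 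Truncating at $R_n$ removes large roots but does nothing about this local singularity. So there is no exponential moment, and the argument of Lemma~\ref{lem:diff_potential_concentration} does not apply.

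What you need is a polynomial-moment concentration inequality at exponent $q=2+\delta$. Burkholder's or Rosenthal's inequality for your martingale would do, as would the generalized Efron--Stein inequality the paper uses. Resampling one pair $(z_k,u_k)$ changes $D:=U_{t_n,n}-U^\varepsilon_{t_n,n}$ by $d_n^{-1}$ times log-differences whose $q$-th moments are uniformly bounded. Hence $\E\|D-\E D\|_{L^1(K)}^{q}\lesssim n^{q/2}d_n^{-q}\lesssim n^{-1-\delta/2}$, which is summable; this is exactly where $\delta>0$ is used.

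With this repair your route is a valid alternative to the paper's. You combine almost sure smallness of $D$ with the \emph{assumed} almost sure convergence of $\nu^\varepsilon_{t_n,n}$. The paper instead first proves $\E\int\varphi\,\dd\nu_{t_n,n}\to\int\varphi\,\dd\nu_t$, then applies Efron--Stein to $\int\varphi\,\dd\nu_{t_n,n}$ itself.

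Separately, your claim $U^\varepsilon_{t_n,n}\to U^\varepsilon_t$ in $L^1_{\mathrm{loc}}$ does not follow from weak convergence, \eqref{eq:ULp} and tightness. A vanishing fraction of mass at logarithmically large distance shifts the potential by a constant (think of adding $\frac1n\delta_{e^{n}}$). Moreover, \eqref{eq:ULp} needs uniform logarithmic moments of $\nu^\varepsilon_{t_n,n}$, which you have not shown. You do not need this step at all. Pair $\Delta\varphi$ directly with $U^\varepsilon_{t_n,n}-U^{\varepsilon'}_{t_n,n}$ and with $U_{t_n,n}-U^\varepsilon_{t_n,n}$, as the paper does. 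This shows that $\int\varphi\,\dd\nu^\varepsilon_t$ is Cauchy and gives the almost sure vague convergence. Your tightness argument via \eqref{eq:tightnessEmu} then upgrades this to weak convergence; it only uses second log-moments, which are available under \eqref{eq:asslocallog}.
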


The trick to remove the density assumption is simple: We independently perturb the roots and control the effect again by our recursive formulas. First, we shall control the perturbation in expectation, and then we shall make use of another concentration inequality which is tailored to our resampling strategy used throughout; The Efron-Stein inequality. The proof works exactly the same in the heat flow case as $t_n\to 0$, we omit the repetitive details.

\begin{proof}
	Define $z_k^\varepsilon:=z_k+\varepsilon X_k$, where the $X_k\sim \operatorname{Unif}(B_1)$ are independent circular law distributed random variables, independent of $z_j$.
	First, note that the logarithmic moment assumptions  \eqref{eq:asslocallog} together with \eqref{eq:somelogineq} give
	\begin{align}\label{eq:yetanotherlogmoment}
		C=\sup_{\substack{w\in\C\\0\le\varepsilon\le1}}
		\E\left|
		\log|w-z_1^\varepsilon|-\log(1+|w|)
		\right|^{2+\delta}<\infty.
	\end{align}
	For $\varepsilon>0$, this follows from $|\log(1+|a-\varepsilon X_1|)-\log(1+|a|)|\le\log2$.
	
	We use the recursion \eqref{eq:diff_recursion}, in which we replace the roots $z_k$ one at a time by $z_k^\varepsilon$, analogously to the truncation we have done in \eqref{eq:diff_Doob_increment} and \eqref{eq:Doob_increment}.
	Again, conditioning on all other roots, summing over the replacements, and using	$n/d_n\le(1-\sup_n t_n)^{-1}<\infty$, we obtain, for every compact $K\subseteq \C$,
	\begin{align*}
		\sup_n\E\|U_{t_n,n}-U_{t_n,n}^\varepsilon\|_{L^1(K)}
		&\lesssim 
		\sup_{w\in\C}
		\E\left|\log|w-z_1|-\log|w-z_1^\varepsilon|\right|\\
		&\le \sqrt{\varepsilon}
		+\frac{2C}{|\log\sqrt\varepsilon|^{1+\delta}}
		\to0,
	\end{align*}
	Now, truncating the log by $w\mapsto\log(|w|\vee\sqrt \varepsilon)$, which is a $\sqrt{1/\varepsilon}$-Lipschitz function at distance $\varepsilon$, we obtain 
	\begin{align*}\sup_{w\in\C}\E\left|\log|w-z_1|-\log|w-z_1^\varepsilon|\right| 	\le \sqrt{\varepsilon}+\mathrm{remainder}
	\end{align*}
	where the remainder consists of the two terms for $z_1^\varepsilon$ and $z_1$ of the form 
	\begin{align*}
		\sup_{w\in\C}\E\left[\log(|w-z_1^\varepsilon|\vee\sqrt\varepsilon)-\log|w-z_1^\varepsilon  |\right] \le\frac{C}{|\log\sqrt\varepsilon|^{1+\delta}}.
	\end{align*}
	Combining these, we have shown that uniformly in $n$ and as $\varepsilon\to 0$,
	$$ 
	\sup_n\E\|U_{t_n,n}-U_{t_n,n}^\varepsilon\|_{L^1(K)} 
	\lesssim \sqrt{\varepsilon} 
	+\frac{2C}{|\log\sqrt\varepsilon|^{1+\delta}} \to0, 
	$$ 
	When $\lfloor t_n n\rfloor=0$, the same estimate is trivial.
	
	Let us now argue how this implies $\nu_t^\varepsilon\Rightarrow\nu_t$ as $\varepsilon\to0$. By the distributional Poisson equation and letting first $n\to\infty$, we get in particular
	$$
	\left|\int\varphi \dd(\nu_t^\varepsilon-\nu_t^{\varepsilon'})\right|
	\lesssim
	\|\Delta\varphi\|_{L^1}
	\big(o_\varepsilon(1)+o_{\varepsilon'}(1)\big).
	$$
	for any $\varphi\in\mathcal C_c^\infty(\C)$ and as $\varepsilon,\varepsilon'\to0$. Thus, $\int\varphi\dd\nu_t^\varepsilon$ is a Cauchy sequence, hence convergent. Moreover, the a priori vague limit $\nu_t$ has full mass: Choose $\varphi_R(z):=\varphi(z/R)$ with $\|\Delta\varphi_R\|_{L^1}=\|\Delta\varphi\|_{L^1}$ for
	$\varphi\in\mathcal C_c^\infty(\C)$ with $0\le\varphi\le1$,
	$\varphi=1$ on $B_1$ and $\supp\varphi\subseteq B_2$, such that
	$$
	\nu_t^\varepsilon(B_{2R}^c)
	\le\int1-\varphi_R\dd\nu_t^\varepsilon\le
	1-\int\varphi_R\dd\nu_t^{\varepsilon'}
	+C\big(o_\varepsilon(1)+o_{\varepsilon'}(1)\big).
	$$
	Now send first $\varepsilon\to0$, then $R\to\infty$, and finally $\varepsilon'\to0$ proves asymptotic tightness, yielding the claimed weak convergence as $\varepsilon\to0$.
	
	Consequently, the distributional Poisson equation and the assumption $\nu_{t_n,n}^\varepsilon\Rightarrow\nu_t^\varepsilon$, first as $n\to\infty$ and then as $\varepsilon\to0$,
	give for all $\varphi\in\mathcal C_c^\infty(\C)$,
	$$
	\E\int\varphi\,\dd\nu_{t_n,n}
	\longrightarrow\int\varphi \dd\nu_t.
	$$
	It remains to establish almost sure convergence, for which we set up the resampling strategy.	For each $k$, let $\widehat\nu_{t_n,n}$ denote the measure obtained
	by replacing $z_k$ by an independent copy.
	Using the recursion \eqref{eq:diff_recursion} for yet another time, together with H\"older's inequality and the logarithmic moment bound \eqref{eq:yetanotherlogmoment} imply,
	$$
	\sup_{k\le n}
	\E\left|\int\varphi\,\dd(\nu_{t_n,n}-\widehat\nu_{t_n,n})
	\right|^{2+\delta}
	\le
	\frac{ C\,\|\Delta\varphi\|_{L^1}^{2+\delta}}
	{(\pi d_n)^{2+\delta}}
	\lesssim_\varphi n^{-2-\delta}
	$$
	for any $\varphi\in\mathcal C_c^\infty(\C)$. This serves as an input for the generalized Efron--Stein inequality
	\cite[Eq.~(1.3)]{Houdre} (see also \cite[Thm~15.5]{BLM13}), implying directly
	$$
	\E\left|\int\varphi\,\dd(\nu_{t_n,n}-\E\nu_{t_n,n})
	\right|^{2+\delta}
	\lesssim  n^{-1-\delta/2}.
	$$
	Markov's inequality and Borel--Cantelli now give almost sure
	convergence against any countable family of smooth
	compactly supported test functions, thus weak convergence to the probability distribution $\nu_t$.
\end{proof}

\subsection{Weak convergence}\label{sec:weakling}

Observe that contrary to the heat flow at $t_n\to 0$, we cannot use \eqref{eq:nonotn} because \eqref{eq:context} explodes and reciprocal norms $\| {m_\mu^{-1}} \|_{L^1(K)}$ cannot be easily controlled. In particular it may happen that $m_0(z)=0$ on a non-negligible set.  Hence, we run a variation of the argument using logarithmic potentials for the

\begin{proof}[Proof of Theorem \ref{thm:vanish}, continued] 
For the quantitative statement, we assume that $\mu_0$ has a bounded density on $\C$, and for the qualitative statement $\nu_{t_n,n}\Rightarrow\mu_0$ the same assumption is justified by Proposition \ref{prop:diff_regularization}. We need to control $\|\E U_{t_n,n}-U_0\|_{L^1(K)}\to0$ for any compact $K\subseteq \C$.
By \eqref{eq:diff_crucial_rep_U} and $U_0(z)=\E\frac1{d_n}\log|P_{d_n}(z)|$, we have 
	\begin{align}\label{eq:firststep}
	\E U_{t_n,n}(z)-U_0(z)
	=-\frac1{d_n}\log\binom{n}{\lfloor t_nn\rfloor}+ \frac1{d_n}\sum_{j=1}^{ \lfloor t_nn\rfloor} 
	\E \log\left|
	1
	+\frac{z-z_{d_n+j}}{j}
	\frac{\partial_z^jP_{d_n+j-1}(z)}
	{\partial_z^{j-1}P_{d_n+j-1}(z)}
	\right| , \end{align}
	where the first term is asymptotically negligible. 
	Recall that \eqref{eq:somelogineq} and the logarithmic moment assumptions \eqref{eq:asslocallog} give
	$$
	\sup_{a\in\mathbb C}
	\mathbb E\left|\log|a-z_1|-\log(1+|a|)\right|\le C.
	$$
For any $w\neq0$ and $a=z+w^{-1}$, we factor out $w$ to get
	\begin{align*}
		\mathbb E\left|\log|1+w(z-z_k)|\right|
		&\le
		\mathbb E\left|
		\log|z+w^{-1}-z_k|-\log(1+|z+w^{-1}|)
		\right|+\left|\log\bigl(|w|+|1+wz|\bigr)\right|\\
		&\le C_K+\log(1+C_K|w|),
	\end{align*}
	uniformly in $z\in K$.
Thus, we condition \eqref{eq:firststep} on the first $d_n+j-1$ roots, use independence of the roots $z_k$ and choose $w=\frac1j\frac{\partial_z^jP_{d_n+j-1}(z)}
{\partial_z^{j-1}P_{d_n+j-1}(z)}$.
	 Fubini's theorem, Jensen's inequality on $K$ and \eqref{eq:mLp} for the correct normalization $d_n$ then give
	\begin{align*}
		&\frac1{d_n}\sum_{j=1}^{\lfloor t_nn\rfloor}
		\E\bigg[\int_K
		\left|\log\big|
		1+\frac{z-z_{d_n+j}}{j}
		\frac{\partial_z^jP_{d_n+j-1}(z)}
		{\partial_z^{j-1}P_{d_n+j-1}(z)}
		\big|\right|\dd\mathcal L(z)\bigg]\\
		&\lesssim
		\frac{1}{d_n}
		\sum_{j=1}^{\lfloor t_nn\rfloor}\left( C_K +
		\log\left(1+\frac{C_K d_n}{j}\right)\right)
		\lesssim t_n+
		\frac{1}{d_n}\log\binom n{\lfloor t_nn\rfloor}
		\longrightarrow0.
	\end{align*}
	In total, we have proven
	\begin{align}
\|\E U_{t_n,n}-U_0\|_{L^1(K)}\lesssim t_n +\frac{1}{d_n}\log\binom n{\lfloor t_nn\rfloor}\lesssim t_n\log\big(t_n^{-1}\big),
	\end{align}
where $0\log(0^{-1})=0$. The concentration Lemma \ref{lem:diff_potential_concentration} yields $U_{t_n,n}\to U_0$ almost surely in $L^1(K)$. On the probability-1-event $\cap_{R\in\N} \{U_{t_n,n}\to U_0\text{ in }L^1(\overline B_R)\}$ we have almost sure weak convergence $\nu_{t_n,n}\Rightarrow\mu_0$.

 Moreover, if $\mu_0$ has compact support, then the quantitative bound \eqref{eq:jointwork} implies almost surely
\begin{align}
	\|U_{ t_n,n}-U_0\|_{L^1(K)}\lesssim t_n\log\big(t_n^{-1}\big)+\sqrt{\frac{\log n}n}
\end{align}
 for sufficiently large $n$. Now, we choose some test function $\varphi\in \mathcal C^2_c$ and apply the distributional Poisson  equation $\frac{1}{2\pi }\Delta (U_{ t_n,n}-U_0)=\nu_{ t_n,n}-\mu_0$. Then there exists some deterministic $c>0$, such that almost surely
 \begin{align}
\bigg|\int\varphi\dd\nu_{ t_n,n}-\int \varphi\dd \mu_0\bigg|\le \frac{c}{2\pi}\|\Delta\varphi\|_\infty \bigg(t_n\log\big(t_n^{-1}\big)+\sqrt{\frac{\log n}n}\bigg)
 	\end{align}
 	as claimed (with a new constant).
\end{proof}

\begin{remark} Let us provide a formal analogue of the heat flow integral identity of Remark \ref{rem:formalU}. 
Assuming sufficient joint convergence of all the Stieltjes transforms, we expect
	\begin{align*}
		U_t(z)
		&=U_0(z)+
		\int_0^t\left(
		U_0\Big(z+\frac{\tau}{(1-\tau)m_\tau(z)}\Big)
	+\log\left|(1-\tau)m_\tau(z)\right|\right)\frac{\dd\tau}{(1-\tau)^2}. 
	\end{align*}
which may also give another proof of Theorem \ref{thm:diff_radial}.
\end{remark}

We record a more general convergence theorem under an abstract uniqueness condition.

\begin{theorem}\label{thm:diff_main}
Assume that $\mu_0$ has a bounded density and satisfies \eqref{eq:log-ass} and \eqref{eq:ass_m0asymptotic}. Further suppose that there is at most one probability measure $\nu_t$ supported on the closure of the convex hull of $\operatorname{supp}\mu_0)$, whose Stieltjes transform satisfies the self-consistent equation
	\begin{align}
		1
		=\frac {m_0\left(z+\frac{t}{(1-t)m_t(z)}\right)} {(1-t)m_t(z)}
		\label{eq:diff_selfconseq2} 
	\end{align}
	where the equation is understood in $L^1_{\mathrm{loc}}$ and the right-hand side is extended to possible values $m_t(z)=0$ by continuity (see \eqref{eq:context}).
	Then, we have almost sure weak convergence
	$$
	\nu_{t,n}\Rightarrow\nu_t.
	$$
\end{theorem}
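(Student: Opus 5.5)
The plan is to combine Proposition \ref{prop:diff_m_t_limits} with the uniqueness hypothesis, via a subsequence argument, exactly as Theorem \ref{thm:main} was derived from Proposition \ref{prop:m_t_limits}.

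First, by Lemma \ref{lem:diff_potential_concentration} the sequence $(\E\nu_{t,n})_n$ is tight. Hence every subsequence has a further subsequence with $\E\nu_{t,n}\Rightarrow\nu_t$ for some probability measure $\nu_t$, and along it $\E m_{t,n}\to m_t$ in $L^1_{\mathrm{loc}}$.

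Next we check that every such subsequential limit meets the hypotheses of the uniqueness assumption. Proposition \ref{prop:diff_m_t_limits} shows that $m_t\neq0$ almost everywhere. It also gives \eqref{eq:nonotn}, namely that the extended quotient $m_0(\cdot+\frac{t}{(1-t)m_t})/((1-t)m_t)$ equals $1$ in $L^1_{\mathrm{loc}}$, which is precisely \eqref{eq:diff_selfconseq2}.

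It remains to locate the support of $\nu_t$ in the closed convex hull $H$ of $\operatorname{supp}\mu_0$. Almost surely all $z_j$ lie in $\operatorname{supp}\mu_0\subseteq H$, so the Gauss--Lucas theorem puts every root of $Q_{t,n}$ in $H$. Thus $\E\nu_{t,n}$ is supported in the closed set $H$, and by the portmanteau theorem so is any weak limit. If $\operatorname{supp}\mu_0$ is unbounded, $H$ may be unbounded, but closedness is all the portmanteau argument needs, so this causes no difficulty.

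By the uniqueness hypothesis, all subsequential limits coincide with a single $\nu_t$, and therefore so do their Stieltjes transforms on $\mathcal D_t=\C$. The last part of Proposition \ref{prop:diff_m_t_limits} then gives $m_{t,n}\to m_t$ almost surely in $L^1_{\mathrm{loc}}(\C)$. Since $\partial_{\bar z}m_{t,n}=\pi\nu_{t,n}$ distributionally, this yields almost sure vague convergence $\nu_{t,n}\to\nu_t$ (by the earlier lemma on Stieltjes transforms and vague convergence). Because $\nu_t$ is a probability measure, vague convergence upgrades to weak convergence.

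The main point requiring care is that the uniqueness hypothesis is phrased for $L^1_{\mathrm{loc}}$ solutions that are extended by continuity at $m_t=0$. We must therefore make sure that the limit equation we obtain is exactly that extended equation. This follows from \eqref{eq:context} together with the convergence in \eqref{eq:nonotn}, which hold under \eqref{eq:ass_m0asymptotic}.
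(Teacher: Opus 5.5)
Your proposal is correct and follows the paper's proof: tightness from Lemma~\ref{lem:diff_potential_concentration}, identification of every subsequential limit through the extended self-consistent equation from Proposition~\ref{prop:diff_m_t_limits} together with the uniqueness hypothesis, almost sure convergence of $m_{t,n}$ by concentration, and passage from $\partial_{\bar z}m_{t,n}=\pi\nu_{t,n}$ to weak convergence. Your Gauss--Lucas and portmanteau argument, showing that subsequential limits are supported on the closed convex hull of $\operatorname{supp}\mu_0$ so that the uniqueness hypothesis actually applies, supplies a step the paper leaves implicit.
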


The uniqueness assumption in Theorem~\ref{thm:diff_main} is a separate non-trivial condition, which is not easily guaranteed as in the heat-flow case. However, rotationally invariant measures automatically select right inverse branch and give an unconditional result as forecast in Theorem \ref{thm:diff_radial}.  

\begin{proof}[Proof of Theorem~\ref{thm:diff_main}]
Lemma \ref{lem:diff_potential_concentration} gives tightness of $\E\nu_{t,n}$ and Proposition~\ref{prop:diff_m_t_limits} provide at least one probability measure $\nu_t$ satisfying \eqref{eq:diff_selfconseq2}. By the assumed
	uniqueness, every subsequential limit is this measure $\nu_t$.
	Hence $\E\nu_{t,n}\Rightarrow\nu_t$ weakly and
	$\E m_{t,n}\to m_t$ in $L^1_{\mathrm{loc}}$.
	Lemma~\ref{lem:diff_stieltjes_concentration} gives the almost sure
	convergence of $m_{t,n}$. Taking $\partial_{\bar z}$ distributionally gives almost sure weak convergence
	of $\nu_{t,n}$.
\end{proof}

In Section \ref{sec:intro} and \ref{sec:symm} below, all instances of the evolutions $\mu_0\Rightarrow\nu_t$ of Conjecture \ref{conj2} are rotationally invariant. Theorem \ref{thm:diff_main} also applies beyond rotationally invariant initial distributions, where limiting distributions remain absolutely continuous after the differentiation evolution.

\begin{remark}\label{rem:nonsymmex}
Let us discuss three possible classes of examples, in increasing  difficulty.

1) The most simple example is to take a symmetric $\tilde\mu_0$ and shift it to non-rotationally invariant $\mu_0:=\tilde\mu_0(\cdot-w)$ by any $w\in\C$, yielding $\nu_t=\tilde\nu_t(\cdot-w)$ since translation commutes with differentiation. However, this may not count  as genuinely non-rotationally invariant.

2) If we allow to fix $t>0$ first, then we may perturb a rotationally invariant density on a small region, which has been "differentiated away" up to that fixed time $t$. To be slightly more precise, one may take the circular law $\tilde \mu_0$, perturb it to $\mu_0:=\tilde\mu_0+\varepsilon\Delta h \dd\mathcal L$, where $h\in \mathcal C_c^\infty(B_{\sqrt t/2})$ is non-radial and real-valued. For $\varepsilon$ small enough, $\mu_0$ is a non-rotationally invariant distribution with bounded density, but after time $t<1$, the limit coincides with that of the circular law, i.e.
	$$
	\nu_{t,n}\Rightarrow
	\frac1{1-t}
	\left(w\mapsto w-\frac{t}{\bar w}\right)_\#
	\left(\tilde\mu_0|_{\{\sqrt t<|w|<1\}}\right).
	$$
To verify this, observe that the admissible of the two inverse branches of $S_t$ is unchanged, while the other invalid branch still has negative Jacobian, giving the uniqueness required by Theorem \ref{thm:diff_main}.

3) Constructing an example that is genuinely not rotationally invariant and remains absolutely continuous for all $0<t<1$ is non-trivial. Let us explain how this could be achieved.

Starting from the circular law $\tilde\mu_0$, we seek a diffeomorphism $F:B_1\to F(B_1)$ that deforms concentric circles into nested layers. The region between layers shall carry equal mass $\dd r$, and the layer $F(\{|z|^2=r\})$ should shrink to $r/10$, disappearing at time $r$. Thus, the collapse point $t/10$ moves with time, while the outer layers survive. The difficulty is to choose $F$ so that the Cauchy transform of $F_\#\tilde\mu_0$ generates precisely this motion, with the surviving layers remaining nested. An explicit choice (this was suggested by ChatGPT) is $F(z):=z e^{W_0(\bar z/10)}\bigl(1+W_0(\bar z/10)\bigr)$
where $W_0$ is the principal Lambert $W$-function. It can indeed be shown that $\mu_0:=F_\#\tilde\mu_0$ has bounded compactly supported density, is not rotationally invariant about any point, and, for every $0<t<1$, almost surely
$$
\nu_{t,n}\Rightarrow
\nu_t:=\frac1{1-t}(S_t)_\#
\bigl(\mu_0|_{F(\{\sqrt t<|z|<1\})}\bigr),
$$
where \(\nu_t\) is absolutely continuous. The remaining calculations are technical and contain no further insight for our study, so we omit them and continue with the more illuminating rotationally invariant framework below.
\end{remark}

\subsection{Proofs of the Lemmas}

\begin{proof}[Proof of Lemma~\ref{lem:diff_leave_one_out}]
	We follow the proof of Lemma \ref{lem:leave_one_out}, where one new estimate will be necessary to get the equivalent statement of \eqref{eq:lol_p0_estimate} with $d_n$ replacing the $n$.
	Logarithmic  differentiation of
	\eqref{eq:diff_recursion} from Lemma \ref{lem:diff_commu} gives the exact identity
	\begin{align}
		m_{t,n}-m_{t,n}^{[k]}
		= \frac{1
			+(z-z_k)\frac{\partial_zm_{t,n}^{[k]}}{m_{t,n}^{[k]}}}
		{ \frac{\lfloor tn\rfloor}{m_{t,n}^{[k]}}+ {d_n}(z-z_k)}
		\label{eq:diff_lol_difference}
	\end{align}
	for every sufficiently large $n\in\N$ and almost all $z\in\C$.
	Exactly as in \eqref{eq:Lploc_Stieltjes} for any $p<2$, the first $1$-summand is controlled by the uniform bound 
	\begin{align}
		\mathbb E\left[\int_K
		\left| 
		\frac{\lfloor tn\rfloor}{ m_{t,n}^{[k]}(z)}
		+d_n(z-z_k)
		\right|^{-p} \dd \mathcal L(z)\right]\lesssim d_n^{-p} \sup_{a\in\C}
		\int_{\C}|a-w|^{-p} \mu_0(dw)\lesssim d_n^{-p},
	\end{align}
	since $\mu_0$ has bounded density.
	
Now, fix $1/2<p_0<1$ so that by subadditivity it remains to control the term of \eqref{eq:diff_lol_difference} involving $\partial_zm_{t,n}^{[k]}$. First note that since $z\in K$ and assuming $|z_k|<R$ for some fixed $R>0$, one can improve the above by taking into account a possible dominating term ${\lfloor tn\rfloor}/{ m_{t,n}^{[k]}(z)}$, yielding
	\begin{align}
		 \mathbb E\left[
		\left|	 
		\frac{\lfloor tn\rfloor}{  m_{t,n}^{[k]}(z)}
		+d_n(z-z_k) 
		\right|^{-p_0}\mathbf1_{B_R}(z_k)
	 \middle|z_j,\ j\ne k
		\right]  \lesssim 
		\left(\frac{|m_{t,n}^{[k]}(z)|}{ \lfloor tn\rfloor}\right)^{p_0}\wedge d_n^{-p_0}
		\label{eq:diff_inverse_moment}
	\end{align}
with a constant depending on $R$.
As in the derivative estimate \eqref{eq:someLpbound} in the heat-flow proof, now with the normalization $d_n$ and $p_0<1$ give
\begin{align}
	\int_K
	|\partial_zm_{t,n}^{[k]}(z)|^{p_0}\dd\mathcal L(z)
	\lesssim d_n^{1-p_0}.\label{eq:delzmint}\end{align}
On the other hand, on the event $z_k\notin B_R$, we will use \eqref{eq:mLp} to get
\begin{align*} \E\left[\int_K\left|	m_{t,n}-m_{t,n}^{[k]}\right| ^{p_0}  \dd \mathcal L(z)\mathbf1_{B_R^c}(z_k)\right]
	&\le  C_{K,p_0} \mu_0(B_R^c).
\end{align*}
Finally, we use subadditivity in \eqref{eq:diff_lol_difference} and combine our collected bounds. First average over $z_k$ leading to cancellation of $m_{t,n}^{[k]}(z)$, then integrate out $z$ and afterwards average over $z_j,j\neq k$, yields uniformly in $k\le n$,
	\begin{align}
	 \mathbb E\left[ \int_K
		|m_{t,n}-m_{t,n}^{[k]}|^{p_0}\dd\mathcal L\right]
		&\le C_{K,p_0,R}\left( 
		d_n^{-p_0}
		+ \lfloor tn\rfloor^{-p_0}		\mathbb E\left[\mathbf1_{B_R}(z_k) \int_K
		|\partial_zm_{t,n}^{[k]}|^{p_0}\dd\mathcal L \right]\right)+C_{K,p_0}\mu_0(B_R^c)\nonumber \\
		&\le C_{K,p_0,R,t}\left( n^{-p_0}+ n^{1-2p_0}\right) +C_{K,p_0 } \mu_0(B_R^c),
		\label{eq:diff_lol_p0_estimate}
	\end{align}
where the last step follows from \eqref{eq:delzmint}. Taking the $\limsup$ for $n\to\infty$ bounds it by $\lesssim\mu_0(B_R^c)$.  Finally we take $R\to\infty$, and the extension to $p=1$ as claimed follows by the same H\"older inequality \eqref{eq:Hodor}.
	\end{proof}

\begin{proof}[Proof of Lemma \ref{lem:diff_potential_concentration}] We first follow the proof of Lemma \ref{lem:potential_concentration} almost exactly, hence we only mention the necessary changes, such as $d_n$ replacing some $n$'s. First, assume $\supp\mu_0\subseteq B_R$.
 	
Again let $\mathcal F_k:=\sigma(z_1,\ldots,z_k)$ and define the Doob martingale
	$M_k:=\E[U_{t,n}(z)\mid\mathcal F_k]$, which exists due to \eqref{eq:ULp}. Analogously to \eqref{eq:Doob_increment}, independence and the tower
	property imply
	\begin{align}
		M_k-M_{k-1}=\frac{1}{d_n}\E\Big[\log\Big|z+\frac{\lfloor tn\rfloor}{ {d_n} m_{t,n}^{[k]}(z)}-z_k\Big|-U_0\Big(z+\frac{\lfloor tn\rfloor}{ {d_n} m_{t,n}^{[k]}(z)}\Big)\mid\mathcal F_k\Big].
		\label{eq:diff_Doob_increment}
	\end{align} 
	Conditional Jensen's inequality, the tower property, and
	\eqref{eq:uniform_log_moments} now again give the moment Bernstein condition 
	\begin{align}
		\E\left[|M_k-M_{k-1}|^q\mid\mathcal F_{k-1}\right]
		\leq q!\left(\frac{C_R}{d_n}\right)^q,
		\qquad q\geq2
		\label{eq:diff_Bernstein_moments},
	\end{align} 
	which with $d_n\ge n\varepsilon$ implies exponential moments for  $0<\lambda<\frac{n\varepsilon}{C_R}$.
	\begin{align}
		\E\left[e^{\lambda |U_{t,n}(z) -\E U_{t,n}(z)|} \right]\le 2\exp \left( \frac{  n\lambda^2 C_R^2 }{d_n^2- d_n \lambda C_R}\right) 
		\le 2\exp \left( \frac{  \lambda^2 C_R^2 }{n\varepsilon^2- \varepsilon \lambda C_R}\right) .\label{eq:ExpMom}
	\end{align}
	Chernoff's bound with $\lambda=d_n^2u/(2nC_R^2+d_nC_Ru)$ yields the concentration inequality 
	\begin{align}
		\P\left(|U_{t,n}(z)-\E U_{t,n}(z)|>u\right)
		\leq2\exp\left(-\frac{d_n^2u^2}{2(2nC_R^2+d_nC_Ru)}\right).
		\label{eq:diff_potential_concentration}
	\end{align}
for some $C_R>0$ depending on the size of the support of $\mu_0$, for almost all $z\in\C$, every $u>0$ and all sufficiently large $n$. Note that these statements continue to hold if $\lfloor t n \rfloor=0$, despite \eqref{eq:diff_deleted} being ill defined. 

The common compact support required for the last arguments as in the proof of Lemma \ref{lem:potential_concentration} follows from Gauss--Lucas. The proof of \eqref{eq:jointwork} follows again as a joint work of our colleagues, with the only change that we take $c=5C_R\mathcal L(K)/\varepsilon$ and union bound of \eqref{eq:2or3} with exponent $3$, since now $U_{t,n}$ only depends on $\lfloor t n\rfloor$, which may take at most $n $ different values.
	
	It remains to remove the compact support assumption, for which we use the same truncation \eqref{eq:truncation}, which leads to the replacement of \eqref{eq:onesteptruncation} given by
	$$
	U_{t,n}^{(R,k-1)}(z)-U_{t,n}^{(R,k)}(z)
	=\frac1{d_n}
	\left(\log|a-z_k|-\log|a-z_k^{(R)}|\right),
	$$
	for $a=z+\frac{\lfloor tn\rfloor}{d_n\widetilde m_{t,n}^{[k]}(z)}$. All the remaining line of arguments of the proof of  Lemma \ref{lem:potential_concentration} follow analogously, among the \eqref{eq:Rbound}, tightness of $\E\nu_{t,n}$ via Gauss--Lucas Theorem and \eqref{eq:tightnessEmu}, as well as the choice $R_n=\exp(n^{1/(2+\delta)})$, leading inevitably to the claim.
	
\end{proof}

\subsection{The (local) push-forward}
The Jacobian calculation has the same form as for the heat flow, with
the additional factor $1/(1-t)$ from the probability normalization.
The inverse branch must first be selected by the convergence argument.

\begin{prop}[Local push-forward theorem]\label{prop:diff_localpushforward}Assume that $\mu_0$ has a bounded density and satisfies \eqref{eq:log-ass} and \eqref{eq:ass_m0asymptotic}. 
	Let $W_t\subset\{w:m_0(w)\neq0\}$ and $\mathcal D_t$ be open sets
	such that $S_t:W_t\to\mathcal D_t$ is a $\mathcal C^1$-diffeomorphism. If every subsequential weak
	limit $\nu_t$ of $\E\nu_{t,n}$ satisfies
	\begin{align}
		z+\frac{t}{(1-t)m_t(z)}\in W_t
		\qquad\text{for almost every }z\in\mathcal D_t,
		\label{eq:diff_branch_selection}
	\end{align}
	then, we have almost sure vague convergence 
	\begin{align}
		\nu_{t,n}|_{\mathcal D_t}\rightarrow \nu_t|_{\mathcal D_t}
		=\frac{1}{1-t}(S_t)_\#(\mu_0|_{W_t}).
		\label{eq:diff_localpushforward}
	\end{align}
	If also $\mu_0(W_t)=1-t$, then $\nu_{t,n}\Rightarrow\nu_t$
	almost surely globally.
\end{prop}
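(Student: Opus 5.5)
The plan mirrors Proposition \ref{prop:localpushforward} and the proof of Theorem \ref{thm:main}, with the branch selection \eqref{eq:diff_branch_selection} replacing the contraction argument.

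\emph{Step 1: a closed formula for $m_t$ on $\mathcal D_t$.} Let $\nu_t$ be any subsequential weak limit of $\E\nu_{t,n}$. By Proposition \ref{prop:diff_m_t_limits}, $m_t\neq0$ almost everywhere and the self-consistent equation \eqref{eq:diff_selfconseq} holds. Set
\[
w(z):=z+\frac{t}{(1-t)m_t(z)}.
\]
By \eqref{eq:diff_selfconseq}, $m_0(w(z))=(1-t)m_t(z)$. Substituting this back into the definition of $w$ gives
\[
z=w(z)-\frac{t}{m_0(w(z))}=S_t(w(z)).
\]
By \eqref{eq:diff_branch_selection}, $w(z)\in W_t$ for almost every $z\in\mathcal D_t$. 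Since $S_t:W_t\to\mathcal D_t$ is a bijection, this forces $w(z)=S_t^{-1}(z)$. Hence, almost everywhere on $\mathcal D_t$,
\[
m_t(z)=\frac{1}{1-t}\,m_0\bigl(S_t^{-1}(z)\bigr).
\]
The right-hand side does not depend on the subsequence. So all subsequential limits share the same Stieltjes transform on $\mathcal D_t$. The last part of Proposition \ref{prop:diff_m_t_limits} then gives $m_{t,n}\to m_t$ almost surely in $L^1_{\mathrm{loc}}(\mathcal D_t)$. Applying $\partial_{\bar z}$ distributionally yields almost sure vague convergence $\nu_{t,n}|_{\mathcal D_t}\to\nu_t|_{\mathcal D_t}$.

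\emph{Step 2: identify the density.} Since $S_t$ is a $\mathcal C^1$-diffeomorphism, $m_0$ is $\mathcal C^1$ on $W_t$ (because $m_0=t/(w-S_t(w))$ there), so $m_t$ is $\mathcal C^1$ on $\mathcal D_t$. I will redo the Jacobian computation \eqref{eq:Jacobiancalc} with $S_t$ in place of $T_t$ and with the factor $\frac1{1-t}$. We have
\[
\partial_{\bar w}S_t=\frac{t\,\partial_{\bar w}m_0}{m_0^2},\qquad \partial_w S_t=1+\frac{t\,\partial_w m_0}{m_0^2}.
\]
Inserting these, the cross terms cancel exactly as before, leaving
\[
\partial_{\bar z}m_t(z)=\frac{1}{1-t}\,\frac{\partial_{\bar w}m_0(w)}{\det JS_t(w)},\qquad w=S_t^{-1}(z).
\]
The left side is $\pi$ times the density of the positive measure $\nu_t$, and $\partial_{\bar w}m_0=\pi\mu_0\ge0$. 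Where $\mu_0$ has positive density this forces $\det JS_t>0$. On the set where $\mu_0$ vanishes both sides are zero, so the sign of the determinant there is irrelevant.

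The change of variables formula then identifies $\nu_t|_{\mathcal D_t}$ with $\frac{1}{1-t}(S_t)_\#(\mu_0|_{W_t})$, which gives \eqref{eq:diff_localpushforward}.

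\emph{Step 3: global statement.} If $\mu_0(W_t)=1-t$, the measure $\frac1{1-t}(S_t)_\#(\mu_0|_{W_t})$ has total mass $1$ and is carried by $\mathcal D_t$. Each subsequential limit $\nu_t$ of $\E\nu_{t,n}$ is a probability measure, since tightness holds by Lemma \ref{lem:diff_potential_concentration}. Its restriction to $\mathcal D_t$ already has mass $1$, so $\nu_t$ has no mass outside $\mathcal D_t$. Therefore the limit is unique and $\E\nu_{t,n}\Rightarrow\nu_t$. Lemma \ref{lem:diff_stieltjes_concentration} upgrades this to almost sure vague convergence $\nu_{t,n}\to\nu_t$. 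Since the limit has full mass, vague convergence is weak convergence.

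\emph{Main obstacle.} I expect the main difficulty to be the regularity needed in Step 2, namely differentiating $m_t$ through the inverse branch. Two points need care. First, the equation $m_t=\frac{1}{1-t}m_0\circ S_t^{-1}$ holds only almost everywhere, so $m_t$ must be replaced by its continuous representative before the chain rule is applied. Second, I must check that the distributional $\partial_{\bar z}m_t$ agrees with the pointwise derivative on $\mathcal D_t$. This follows because a locally $\mathcal C^1$ representative exists, but it has to be stated carefully.
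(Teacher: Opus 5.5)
Your proposal is correct and follows the paper's proof. You combine \eqref{eq:diff_selfconseq} with the branch selection \eqref{eq:diff_branch_selection} and injectivity of $S_t$ on $W_t$ to get $m_t=\frac{1}{1-t}m_0\circ S_t^{-1}$ on $\mathcal D_t$, independently of the subsequence; you then redo the Jacobian computation of Proposition \ref{prop:localpushforward} to obtain \eqref{eq:>0} and conclude by change of variables. Your Step 3 (full mass of the push-forward forces every subsequential limit onto $\mathcal D_t$, then concentration and the vague-to-weak upgrade) simply makes explicit what the paper summarizes as ``the rest of the proof follows identically''.
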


\begin{proof} 
	Recalling the definition \eqref{eq:diff_transport} of $S_t$, we see that Proposition~\ref{prop:diff_m_t_limits} implies $S_t^{-1}(z)= z+\frac {t}{(1-t)m_t(z)}$ on $\mathcal D_t$. Thus all subsequential
	transforms agree there, and Proposition~\ref{prop:diff_m_t_limits}
	proves $m_{t,n}(z)\to m_t(z)= 	\frac{1}{1-t}m_0(S_t^{-1}(z))$  almost surely in $L^p_{\mathrm{loc}}(\mathcal D_t)$, $0<p<2$. Now, we again have
\begin{align}
	\partial_{\bar z}m_t(S_t(w))
	=\frac{\partial_{\bar w}m_0(w)}{(1-t)\det JS_t(w)}\ge 0
	\label{eq:>0}\end{align}
	precisely as in the proof of Proposition \ref{prop:localpushforward}, and the rest of the proof follows identically.		
\end{proof}

\begin{remark}
	A diffeomorphism $S_t:W_t\to\mathcal D_t$ specifies its inverse branch, but does not establish that subsequential limits actually select this branch (which was automatic for the heat flow under the Lipschitz assumption). This is precisely the role of assumption \eqref{eq:diff_branch_selection}. For example, for the circular law,
	$m_0(w)=\bar w$ inside the unit disk. Thus, $\mathcal D_t=B_{1-t}\setminus\{0\}$, and $S_t$ can be inverted to either annulus $W_t=B_1\setminus \overline B_{\sqrt t}$ or $W_t=B_{\sqrt t}\setminus \overline B_t$. The radial argument in Section \ref{sec:symm} below selects the correct surviving region to be the outer annulus satisfying \eqref{eq:>0}.
\end{remark}

\subsection{Rotationally symmetric distributions}\label{sec:symm}
\begin{proof}[Proof of Theorem~\ref{thm:diff_radial}]
 Rotation of all input roots $z_j$ by some angle, rotates all zeros after repeated differentiation $Q_{t,n}$ by the same angle. Thus $\E\nu_{t,n}$ is rotationally invariant, and so is every
	subsequential limit $\nu_t$. Define the radial cumulative function as
	$$
	F_t(r):=(1-t)\nu_t(B_r ).
	$$
	The Stieltjes transform of a rotationally invariant measure satisfies
\begin{align}\label{eq:mtFt}
	(1-t)m_t(z)=\frac{F_t(|z|)}{z}
	\qquad\text{for } t\ge 0 \text{ and almost every }z\neq0.
\end{align}
Let us first consider the case where $\mu_0$ has a bounded density. Then, Proposition~\ref{prop:diff_m_t_limits} applies\footnote{One may also apply Theorem \ref{thm:diff_main} directly by arguing that any solution $m_t$ to \eqref{eq:diff_selfconseq2} satisfies $zm_t(z)\in\R$ and thus belongs automatically to a rotationally invariant measure.} since \eqref{eq:ass_m0asymptotic} always holds by \eqref{eq:mtFt} and it shows that $F_t(r)>0$ for almost every $r>0$. Since $F_t$ is nonnegative and nondecreasing, it cannot vanish. Thus, $r+\frac{tr}{F_t(r)}$ is well defined for any $r>0$, which we claim to be the radial inverse transport map. Substituting into the self-consistent equation \eqref{eq:diff_selfconseq} gives
	\begin{align}
		F_t(r)+t=F_0\left(r+\frac{tr}{F_t(r)}\right)>t
		\label{eq:diff_radial_inverse}
	\end{align}
	 for almost all $r>0 $.
	Hence, we may define the radial transport map $R_t(s):=s-\frac{st}{F_0(s)}$ satisfying $R_t(r+\frac{tr}{F_t(r)})=r$. Due to our density assumption of $\mu_0$, the function $F_0$ is continuous and nondecreasing, with
	$F_0(0)=0$ and $F_0(s)\to1$ as $s\to \infty$.
	On the interval $\{s>0:F_0(s)>t\}=:(s_t,\infty)$, the function $R_t$ is
	continuous and strictly increasing, since 
\begin{align}
	R_t(s_2)-R_t(s_1)
	\geq(s_2-s_1)\left(1-\frac{t}{F_0(s_1)}\right)>0,\qquad 0<s_t<s_1<s_2.\label{eq:RRR}
\end{align}
Moreover, $R_t(s_t)=0$ and $R_t(s)\sim(1-t)s$ as $s\to\infty$, hence the function $R_t:(s_t,\infty)\to(0,\infty)$ is a bijection with inverse $R_t^{-1}(r)=r+\frac{tr}{F_t(r)}$ and it is locally Lipschitz by \eqref{eq:RRR}.
Now \eqref{eq:diff_radial_inverse} holds for all $r>0$ by continuity, hence $F_t$ is uniquely defined and so is every radial subsequential limit $\nu_t$ of $\E\nu_{t,n}$. Again, as in the proof of Theorem \ref{thm:diff_main}, concentration from Lemma \ref{lem:diff_stieltjes_concentration} now yields the claimed almost sure vague (hence, weak) convergence of $\nu_{t,n}$.
	
Definition of $W_t=\{w\in\C:\mu_0(B_{|w|} )>t\}$ and continuity of $F_0$ gives $\mu_0(W_t)=1-t$.  
Combining the above, we obtain coinciding radial distribution function of the push-forward
\begin{align}\label{eq:Lipliplip}
(1-t)^{-1}(S_t)_\#(\mu_0|_{W_t})(B_r )=\frac {F_0(R_t^{-1}(r))-t}{1-t}=\nu_t(B_r ),\end{align}
similar to Proposition \ref{prop:diff_localpushforward}, where differentiability was additionally assumed. 

Most importantly, the inverses of \eqref{eq:diff_radial_inverse} or \eqref{eq:Lipliplip} are given by
\begin{align*}
	F_t^{-1}(u)=\frac{u}{u+t}F_0^{-1}(u+t),
	\qquad 0<u<1-t,
\end{align*}
which is the claimed representation \eqref{eq:quantiles}.

Let us now remove the bounded density assumption, for which we assume now that $\mu_0$ is rotationally invariant satisfying only \eqref{eq:ass_rot}. Then, the uniform local logarithmic moment assumption \eqref{eq:asslocallog} follows already from our assumption \eqref{eq:ass_rot}: Using $\bigl||z|-re^{i\theta}\bigr|\ge r|\sin\theta|$ and an angular integration, we obtain
 \begin{align*}
 \sup_{z\in\mathbb C}\int_{\mathbb C}
	|\log(1\wedge|z-w|)|^{2+\delta} \mu_0(\dd w)
  &\lesssim
	\int_{\mathbb C}|\log(1\wedge|w|)|^{2+\delta} \mu_0( \dd w)
	+\frac1{2\pi}\int_0^{2\pi}
	|\log|\sin\theta||^{2+\delta}\dd\theta \\
 & \lesssim\int_{\C}|\log|w||^{2+\delta}\mu_0(\dd w)+1
	<\infty.
	\end{align*} 
Therefore, we may call on the regularization $z_k^\varepsilon=z_k+\varepsilon X_k$ from Proposition \ref{prop:diff_regularization} for fixed $t$ and the regularized distribution $\mu_0^\varepsilon :=\mu_0*\operatorname{Unif}(B_\varepsilon)$ with bounded density. 
The bounded-density case gives almost sure weak convergence $\nu_{t,n}^\varepsilon\Rightarrow\nu_t^\varepsilon$, with convergent quantiles 
$$
\sup_{0<u<1}
\left|(F_0^\varepsilon)^{-1}(u)-F_0^{-1}(u)\right|
\le\varepsilon.
$$
Proposition \ref{prop:diff_regularization} then shows $\nu_{t,n}\Rightarrow\nu_t$ almost surely with the same quantile representation \eqref{eq:quantiles} as $\varepsilon\to 0$. In particular $s_t:=\inf\{r>0:F_0(r)>t\}=F_0^{-1}(t+)>0$ by \eqref{eq:ass_rot} and $F_t^{-1}(u)>0$ for every $u>0$, so $\nu_t$ has no atom at the origin. Furthermore, for any $0<u<v<1-t$, we obtain
\begin{align*}
	F_t^{-1}(v)-F_t^{-1}(u)
	\ge
	F_0^{-1}(u+t)
	\left(\frac{v}{v+t}-\frac{u}{u+t}\right)
	\ge
	\frac{t s_t(v-u)}{(u+t)(v+t)}
	\ge t s_t(v-u).
\end{align*} 
Hence $F_t$ is Lipschitz with constant $(t s_t)^{-1}<\infty$. By rotational invariance, $\nu_t$ has a density, for almost every $z\neq0$ satisfying
$$
\frac{\dd\nu_t}{\dd\mathcal L}(z)
=\frac{F_t'(|z|)}{2\pi(1-t)|z|}\le \frac{1}{2\pi(1-t)t s_t|z|} .
$$
\end{proof}

\begin{proof}[Proof of Corollary~\ref{cor:diff_circ}]
	Here $F_0(s)=s^2$ for $0\leq s\leq1$, and the surviving roots
	start in the annulus $\sqrt t<|w|<1$. On this annulus,
	$$
	S_t(se^{i\theta})=\left(s-\frac{t}{s}\right)e^{i\theta},
	$$
	and $ R_t(s)=s -\frac{t}{s } $.
	For $0<r<1-t$, its 	radial inverse is  $r\mapsto \frac{r+\sqrt{r^2+4t}}{2}$
and we obtain
$$\nu_t(B_r )
	=\frac{r\bigl(r+\sqrt{r^2+4t}\bigr)}{2(1-t)},
	\qquad 0\leq r\leq1-t, $$
Differentiation with respect to $r$, and dividing by $2\pi r$, gives the density as claimed in \eqref{eq:diff_circ_density}. 
\end{proof}

\begin{example}
Let us finish with a slightly more general example that exhibits a striking difference between heat flow and repeated differentiation.
For $0\le r<1$, consider the uniform distribution  $\mu_0$ on the annulus $\{z\in\C:r\le |z|\le 1\}$. Then, the same arguments as for Corollary \ref{cor:circ} and Corollary \ref{cor:diff_circ} show
\begin{align}	
\supp\mu_t = \left\{ x+iy: \frac{x^2}{(1+t)^2} +\frac{y^2}{(1-t)^2}\le1,\quad x^2+y^2\ge r^2 \right\},\quad t<(1-r^2)/2
\end{align}
where $m_0$ is $L=2/(1-r^2)$-Lipschitz.
On the other hand,  $ R_t(s)=s\left(1-\frac{t(1-r^2)}{s^2-r^2}\right)$ gives
\begin{align}
\supp\nu_t =\{|z|\le (1-t)\}, \quad 0<t<1.
\end{align} 
Hence, for small time, the heat flow preserves the hole $B_r $, while repeated differentiation immediately fills it.
\end{example}
 \AtEndDocument{\vfill 
 	\begin{center}
 		\emph{AI steals all my fun out of the challenge.} ------ JJ \\
 		\emph{JJ steals all my lemmas out of the chat.} \rlap{\raisebox{.58ex}{\scalebox{.06}{Actual AI reply: “I’m here to help with the proof, not spoil the discovery.}}}{------} AI
 	\end{center}
 }
%

\end{document}